\newtheorem{thm}{Theorem}[section]
\newtheorem{lem}[thm]{Lemma}
\newtheorem{cor}[thm]{Corollary}
\newtheorem{pro}[thm]{Proposition}
\newtheorem{ex}[thm]{Example}
\newtheorem{rmk}[thm]{Remark}
\newtheorem{defi}[thm]{Definition}
\newcommand {\emptycomment}[1]{}
\newcommand{\be }{\begin{equation}}
\newcommand{\ee }{\end{equation}}
\newcommand{\K}{\mathbb{K}}
\newcommand{\pf}{\noindent{\bf Proof.}\ }
\newcommand {\yh}[1]{{\marginpar{*}\scriptsize\textcolor{purple}{yh: #1}}}
\newcommand{\dr}{\dM^{\rm reg}}
\newcommand{\Real}{\mathbb R}
\newcommand{\Nat}{\mathbb N}
\newcommand{\Integ}{\mathbb Z}
\newcommand{\Field}{\mathbb F}
\newcommand{\huaR}{\mathcal{R}}
\newcommand{\huaO}{\mathcal{O}}
\newcommand{\huaN}{\mathcal{N}}
\newcommand{\g}{\mathfrak g}
\newcommand{\frkg}{\mathfrak g}
\newcommand{\frks}{\mathfrak s}
\newcommand{\frkB}{\mathfrak B}
\def\qed{\hfill ~\vrule height6pt width6pt depth0pt}
\newcommand{\half}{\frac{1}{2}}
\newcommand{\Id}{\rm{Id}}
\newcommand{\e}{\mathbbm{e}}
\newcommand{\br}[1]{   [ \cdot,    \cdot  ]   }
\newcommand{\dM}{\mathrm{d}}
\newcommand{\Hom}{\mathrm{Hom}}
\newcommand{\gl}{\mathfrak {gl}}
\newcommand{\Sym}{\mathrm {Sym}}
\newcommand{\ad}{\mathrm{ad}}
\newcommand{\Img}{\mathrm{Im}}
\newcommand{\sgn}{\mathrm{sgn}}
\begin{document}
\title{
{Nijenhuis operators on pre-Lie algebras\thanks{Research supported
by NSFC (11471139,  11425104), NSF of Jilin Province
(20170101050JC) and Nanhu Scholars Program for Young Scholars of
XYNU. }}
 }\vspace{2mm}
\author{Qi Wang, Chengming Bai, Jiefeng Liu and Yunhe Sheng
}

\date{}
\footnotetext{{\it{Keywords}:  pre-Lie algebra, Nijenhuis operator, deformation, pseudo-Hessian-Nijenhuis structure, paracomplex structure}}
\footnotetext{{\it{MSC}}: 17B60, 17B99, 17D99}

\maketitle

\begin{abstract}
First we use a new approach to give a graded Lie algebra whose
Maurer-Cartan elements characterize pre-Lie algebra structures.
Then using this graded Lie bracket we define the notion of a
Nijenhuis operator on a pre-Lie algebra which generates a trivial
deformation of this pre-Lie algebra. There are close relationships
between $\huaO$-operators, Rota-Baxter operators and Nijenhuis
operators on a pre-Lie algebra. In particular, a Nijenhuis
operator  ``connects'' two $\huaO$-operators on a pre-Lie algebra
whose any linear combination is still an $\mathcal O$-operator in
certain sense and hence compatible L-dendriform algebras appear
naturally as the induced algebraic structures. For the case of the
dual representation of the regular representation of a pre-Lie
algebra, there is a geometric interpretation by introducing the
notion of a pseudo-Hessian-Nijenhuis structure which  gives
rise to a sequence of pseudo-Hessian and pseudo-Hessian-Nijenhuis
structures. Another application of Nijenhuis operators on pre-Lie
algebras in geometry is illustrated by introducing the notion of a
para-complex structure on a pre-Lie algebra and then studying
paracomplex quadratic pre-Lie algebras and paracomplex
pseudo-Hessian pre-Lie algebras in detail. Finally, we give some
examples of Nijenhuis operators on pre-Lie algebras.
\end{abstract}

\tableofcontents

\section{Introduction}

For an algebra $\g$, constructing a graded Lie algebra structure on the standard complex of
multilinear maps (with possible additional conditions) of $\g$ into itself is very important. Deformations and cohomologies of the algebra can be well studied using this graded Lie algebra. For an associative algebra $\g$, the corresponding graded Lie algebra is given as follows.
On the graded vector space $\oplus_k\Hom(\otimes^{k+1}\g,\g)$, there is a graded commutator bracket
$$
[P,Q]^G=P\circ Q-(-1)^{pq}Q\circ P,
$$
  where $P\in\Hom(\otimes^{p+1}\g,\g),
~Q\in \Hom(\otimes^{q+1}\g,\g)$,  such that $(\oplus_k\Hom(\otimes^{k+1}\g,\g),[\cdot,\cdot]^G)$  is a graded Lie algebra.
 This bracket is usually called the  Gerstenhaber bracket. Note that the operation $\circ$ defines a graded pre-Lie
algebra structure.
 A bilinear map $\pi\in\Hom(\otimes^2\g,\g)$ is an associative algebra if and only if $[\pi,\pi]^G=0$
 and the coboundary operator for the cohomology theory of
this associative algebra is
 defined by $\delta(f)=(-1)^{|f|}[\pi,f]^G$.  See \cite{Gerstenhaber1,Gerstenhaber2,Nij,NijRic} for details of graded Lie algebras for associative algebras, commutative algebras and Lie algebras.

  In \cite{ChaLiv}, the authors constructed such a graded
Lie algebra for pre-Lie algebras  using the operad theory. Pre-Lie
algebras are a class of nonassociative algebras coming from the
study of convex homogeneous cones, affine manifolds and affine
structures on Lie groups, and the aforementioned cohomologies of
associative algebras.  They also appeared in many fields in
mathematics and mathematical physics, such as complex and
symplectic structures on Lie groups and Lie algebras, integrable
systems, Poisson brackets and infinite dimensional Lie algebras,
vertex algebras, quantum field theory and operads. See
\cite{Andrada,Bakalov,ChaLiv,Lichnerowicz}, and the survey
\cite{Pre-lie algebra in geometry} and the references therein for
more details.

In this paper, we use a different approach to construct the graded Lie algebra for pre-Lie algebras. We introduce a partial skew-symmetrization operator $\alpha:\Hom(\otimes^{k+1}\g,\g)\longrightarrow \Hom(\wedge^{k}\g\otimes\g,\g)$. Then we  obtain a graded Lie algebra structure $[\cdot,\cdot]^C$ on the complex $\oplus _k\Hom(\wedge^{k}\g\otimes\g,\g)$ via
$$
[P,Q]^C:=\frac{(p+q)!}{p!q!}\alpha([P, Q]^G).
$$
  Note that this graded Lie algebra is exactly the one
given in \cite{ChaLiv} by replacing the left-symmetry by
right-symmetry of the associator.  Moreover, a  bilinear map $\pi:\otimes^2\g\longrightarrow\g$ is a pre-Lie
algebra structure if and only if $\pi$ is a Maurer-Cartan element
($[\pi,\pi]^C=0$) of the graded Lie algebra $(\oplus
_k\Hom(\wedge^{k}\g\otimes\g,\g),[\cdot,\cdot]^C)$ and the
coboundary operator of a pre-Lie algebra can be defined by
$\delta(f)=(-1)^{|f|}[\pi,f]^C$.

  Furthermore, we study deformations of pre-Lie
algebras in terms of the aforementioned graded Lie bracket. In
particular, it leads to the introduction of the notion of a
Nijenhuis operator on a pre-Lie algebra which generates a trivial
deformation of the pre-Lie algebra. It is also motivated by the
similar study for Nijenhuis operators on a Lie algebra which play
an important role in the study of integrability of nonlinear
evolution equations \cite{Dorfman1993}.

  There are close relationships between Nijenhuis operators
and certain known operators on pre-Lie algebras such as
$\huaO$-operators and Rota-Baxter operators, whereas the former
were introduced in \cite{BaiO} as the generalization of the
$S$-equation  in a pre-Lie algebra which is an
analogue of the classical Yang-Baxter equation  in a
Lie algebra  given in \cite{Left-symmetric bialgebras} and the
latter were studied in \cite{AB} and \cite{LHB} with interesting
motivations. On one hand, we illustrate these relationships
explicitly and hence we get some examples of Nijenhuis operators
from the study of $\huaO$-operators and Rota-Baxter operators on
pre-Lie algebras. On the other hand, we find that Nijenhuis
operators play an essential role in certain compatible structures
in terms of $\huaO$-operators. Roughly speaking, a Nijenhuis
operator ``connects'' two $\huaO$-operators on a pre-Lie algebra
whose any linear combination is still an $\mathcal O$-operator
(they are called ``compatible'' $\huaO$-operators)
 in some sense.
We also introduce the notion of compatible L-dendriform algebras
as the naturally induced algebraic structures since the notion of an
L-dendriform algebra was introduced in \cite{BaiO} as the
algebraic structure behind an $\huaO$-operator on a pre-Lie
algebra.

   In particular, there is an interesting geometric
interpretation when we consider the case of the dual
representation of the regular representation of a pre-Lie algebra
for the above approach. In fact, the notion of a pseudo-Hessian
pre-Lie algebra was introduced in \cite{NiBai}, which is the
  algebraic
counterpart of a Lie group with a left-invariant Hessian
structure \cite{shima}. By adding compatibility conditions between a
pseudo-Hessian structure and a Nijenhuis structure on a pre-Lie
algebra, we introduce the notion of a pseudo-Hessian-Nijenhuis
structure. Such a structure has many good properties such as
\begin{enumerate}
\item By a pseudo-Hessian-Nijenhuis structure, we   obtain a sequence of pseudo-Hessian structures and
pseudo-Hessian-Nijenhuis structures. \item The properties that a
pseudo-Hessian-Nijenhuis structure enjoys are totally parallel to
 the ones  of a symplectic-Nijenhuis structure on a Lie algebra
\cite{Kos}. \item The notion of a symmetric $\frks$-matrix in a
pre-Lie algebra was introduced in \cite{Left-symmetric bialgebras}
as a symmetric solution of the $S$-equation which was also interpreted
as an $\huaO$-operator associated to the dual representation of
the regular representation in \cite{BaiO}. Since the inverse of an
$\frks$-matrix is a pseudo-Hessian structure, we construct the
correspondence between pseudo-Hessian-Nijenhuis structures  and
the compatible invertible $\frks$-matrices on a pre-Lie algebra.
\end{enumerate}

 Furthermore, there is another application of Nijenhuis
operators on pre-Lie algebras in geometry. Explicitly, we
introduce the notion of a para-complex structure on a pre-Lie
algebra in which the Nijenhuis condition is exactly the
integrability condition. We show that a para-complex structure on
a pre-Lie algebra gives rise to two transversal pre-Lie
subalgebras. We mainly consider two kinds of structures which
combine para-complex structures and certain symmetric or
skew-symmetric non-degenerate bilinear forms satisfying some
additional conditions. In the skew-symmetric cases, we introduce
the notion of a para-complex quadratic pre-Lie algebra as a
combination of a para-complex structure and a quadratic structure
on a pre-Lie algebra, whereas a quadratic pre-Lie algebra is a
pre-Lie algebra together with a skew-symmetric nondegenerate
invariant bilinear form,  which is the underlying structure of a
symplectic Lie algebra. Moreover,  we show that a para-complex
quadratic pre-Lie algebra is exactly the underlying structure of a
para-K\"{a}hler Lie algebra:
  $$
\xymatrix{
 \mbox{para-K\"{a}hler Lie algebra~}\ar@<1ex>[rr]^{x\cdot_\g y={\omega^\sharp}^{-1}\ad^*_x\omega^\sharp(y)\qquad\qquad}
                && \mbox{para-complex~quadratic~pre-Lie~algebra}  \ar[ll]^{\mbox{commutator}\qquad\qquad}
                }
$$
Para-K\"{a}hler Lie algebras are the algebraic counterpart of
para-K\"{a}hler manifolds \cite{pKm,surveypc} which are widely
studied recently  \cite{Left-symmetric
bialgebras,ben1,ben2,Cal,Cal1}. On the other hand, in the
symmetric cases, we introduce the notion of a paracomplex
pseudo-Hessian structure on a pre-Lie algebra as a combination of
a para-complex structure and a pseudo-Hessian structure on a
pre-Lie algebra, which is equivalent to a pseudo-Hessian pre-Lie
algebra with a pair of isotropic transversal pre-Lie subalgebras
(such a structure is also called a Manin triple of pre-Lie
algebras associated to  a nondegenerate symmetric 2-cocycle in
\cite{NiBai}).


 The paper is organized as follows. In Section 2, we
review some basic properties of pre-Lie algebras like
representations, coboundary operators, $\frks$-matrices and the
relationship with the Gerstenhaber bracket for associative
algebras. In Section 3, we construct a graded Lie algebra  that are used to characterize pre-Lie algebras.
In Section 4, we introduce the notion of a Nijenhuis operator on a
pre-Lie algebra and show that it generates a trivial deformation
of this pre-Lie algebra. In Section 5, we study the relationships
between $\mathcal O$-operators, Rota-Baxter operators and
Nijenhuis operators on a pre-Lie algebra. In particular, we give
the relationships between Nijenhuis operators and compatible
$\huaO$-operators and compatible L-dendriform algebras in terms of
$\mathcal O$-operators on pre-Lie algebras. In Section 6, we
introduce the notion of a pseudo-Hessian-Nijenhuis structure on a
pre-Lie algebra and study its relation with compatible
$\frks$-matrices. In Section 7, we introduce the notion of a
para-complex structure on a pre-Lie algebra and then introduce and
study para-complex quadratic pre-Lie algebras and para-complex
pseudo-Hessian pre-Lie algebras. In Section 8, we study some
examples of Nijenhuis operators on pre-Lie algebras, whereas the
pre-Lie algebras arose from  the operator forms of the classical
Yang-Baxter equation in Lie algebras, integrable Burgers equation,
Rota-Baxter operators on associative algebras and derivations on
commutative associative algebras respectively.


 In this paper, we work over an algebraically closed field $\K$ of characteristic 0 and all the vector spaces are over $\K$.  We only work on finite dimensional vector spaces.
\section{Preliminaries}

\subsection{Pre-Lie algebras, representations and cohomologies}
\begin{defi}  A {\bf pre-Lie algebra} is a pair $(\g,\cdot_\g)$, where $\g$ is a vector space and  $\cdot_\g:\g\otimes \g\longrightarrow \g$ is a bilinear multiplication
satisfying that for all $x,y,z\in \g$, the associator
$(x,y,z)=(x\cdot_\g y)\cdot_\g z-x\cdot_\g(y\cdot_\g z)$ is symmetric in $x,y$,
i.e.
$$(x,y,z)=(y,x,z),\;\;{\rm or}\;\;{\rm
equivalently,}\;\;(x\cdot_\g y)\cdot_\g z-x\cdot_\g(y\cdot_\g z)=(y\cdot_\g x)\cdot_\g
z-y\cdot_\g(x\cdot_\g z).$$
\end{defi}

Let $(\g,\cdot_\g)$ be a pre-Lie algebra. The commutator $
[x,y]^c=x\cdot_\g y-y\cdot_\g x$ defines a Lie algebra structure
on $\g$, which is called the {\bf sub-adjacent Lie algebra} of
$(\g,\cdot_\g)$ and denoted by $\g^c$. Furthermore,
$L:\g\longrightarrow \gl(\g)$ with $x\rightarrow L_x$, where
$L_xy=x\cdot_\g y$, for all $x,y\in \g$, gives a representation of
the Lie algebra $\g^c$ on $\g$. See \cite{Pre-lie algebra in
geometry} for more details.

\begin{defi}
Let $(\g,\cdot_\g)$ be a pre-Lie algebra and $V$  a vector
space. A {\bf representation} of $\g$ on $V$ consists of a pair
$(\rho,\mu)$, where $\rho:\g\longrightarrow \gl(V)$ is a representation
of the Lie algebra $\g^c$ on $V $ and $\mu:\g\longrightarrow \gl(V)$ is a linear
map satisfying \begin{eqnarray}\label{representation condition 2}
 \rho(x)\mu(y)u-\mu(y)\rho(x)u=\mu(x\cdot_\g y)u-\mu(y)\mu(x)u, \quad \forall~x,y\in \g,~ u\in V.
\end{eqnarray}
\end{defi}

Usually, we denote a representation by $(V;\rho,\mu)$. It is
obvious that $(  \K  ;\rho=0,\mu=0)$ is a representation, which we
call the {\bf trivial representation}. Let $R:\g\rightarrow
\gl(\g)$ be a linear map with $x\longrightarrow R_x$, where the
linear map $R_x:\g\longrightarrow\g$  is defined by
$R_x(y)=y\cdot_\g x,$ for all $x, y\in \g$. Then
$(\g;\rho=L,\mu=R)$ is also a representation, which we call the
{\bf regular representation}. Define two linear maps $L^*,R^*:\g\longrightarrow
\gl(\g^*)$   with $x\longrightarrow L^*_x$ and
$x\longrightarrow R^*_x$ respectively (for all $x\in \g$)
by
\begin{equation}
\langle L_x^*(\xi),y\rangle=-\langle \xi, x\cdot y\rangle, \;\;
\langle R_x^*(\xi),y\rangle=-\langle \xi, y\cdot x\rangle, \;\;
\forall x, y\in \g, \xi\in \g^*.
\end{equation}
Then $(\g^*;\rho={\rm ad}^*=L^*-R^*, \mu=-R^*)$ is a
representation of $(\g,\cdot_\g)$. In fact, it is the dual
representation of the regular representation $(\g;L,R)$.

The cohomology complex for a pre-Lie algebra $(\g,\cdot_\g)$ with a representation $(V;\rho,\mu)$ is given as follows (\cite{cohomology of pre-Lie}).
The set of $n$-cochains is given by
$\Hom(\wedge^{n-1}\g\otimes \g,V),\
n\geq 1.$  For all $\phi\in \Hom(\wedge^{n-1}\g\otimes \g,V)$, the coboundary operator $\dM:\Hom(\wedge^{n-1}\g\otimes \g,V)\longrightarrow \Hom(\wedge^{n}\g\otimes \g,V)$ is given by
 \begin{eqnarray}\label{eq:pre-Lie cohomology}
 \nonumber&&\dM\phi(x_1, \cdots,x_{n+1})\\
 \nonumber&=&\sum_{i=1}^{n}(-1)^{i+1}\rho(x_i)\phi(x_1, \cdots,\hat{x_i},\cdots,x_{n+1})\\
\nonumber &&+\sum_{i=1}^{n}(-1)^{i+1}\mu(x_{n+1})\phi(x_1, \cdots,\hat{x_i},\cdots,x_n,x_i)\\
 \nonumber&&-\sum_{i=1}^{n}(-1)^{i+1}\phi(x_1, \cdots,\hat{x_i},\cdots,x_n,x_i\cdot_\g x_{n+1})\\
\label{eq:cobold} &&+\sum_{1\leq i<j\leq n}(-1)^{i+j}\phi([x_i,x_j]^c,x_1,\cdots,\hat{x_i},\cdots,\hat{x_j},\cdots,x_{n+1}),
\end{eqnarray}
for all $x_i\in \g,~i=1,\cdots,n+1$. In particular, we use the
symbol $\dM^T$ to refer the coboundary operator   associated to
the trivial representation and $\dr$ to refer the coboundary
operator  associated to the regular representation. We denote the
$n$-th cohomology group for the coboundary operator $\dr$  by
$H_{\rm reg}^n(\g,\g)$ and $H_{\rm reg}(\g,\g)=\oplus_{n}H_{\rm reg}^n(\g,\g)$.

 \begin{defi}{\rm(\cite{NiBai})}
 A {\bf pseudo-Hessian  structure} on a pre-Lie algebra $(\g,\cdot_\g)$  is a symmetric nondegenerate $2$-cocycle $\frkB\in\Sym^2(\g^*)$, i.e. $\dM^T\frkB=0$. More precisely,
$$\frkB(x\cdot_\g y,z)-\frkB(x,y\cdot_\g z)=\frkB(y\cdot_\g x,z)-\frkB(y,x\cdot_\g z),\quad \forall~x,y,z\in\g.$$
A pre-Lie algebra equipped with a pseudo-Hessian   structure  is called  a {\bf pseudo-Hessian pre-Lie algebra},
and denoted by $(\g,\cdot_\g,\frkB)$.
\end{defi}

\subsection{$\frks$-matrices in
pre-Lie algebras}

\emptycomment{
\begin{defi}{\rm(\cite{Left-symmetric bialgebras})}\label{defi:pre-Lie bialgebra}
 A {\bf pre-Lie bialgebra}  is a pair of pre-Lie algebras $\g$ and $\g^*$  such that
\begin{itemize}
\item[$\rm(a)$]$\alpha$ is a $1$-cocycle on $\g^c$ with the
coefficients in the representation $L\otimes1+1\otimes
\ad$;
\item[$\rm(b)$]$\beta$ is a $1$-cocycle on ${\g^*}^c$
with the coefficients in the representation
$L\otimes1+1\otimes \ad$,
\end{itemize}
where $\alpha:\g\rightarrow \g\otimes\g$ and $\beta:\g^*\rightarrow
\g^*\otimes\g^*$ are linear maps such that
$\alpha^*:\g^*\otimes\g^*\longrightarrow \g^*$ is the pre-Lie
algebra structure on $\g^*$ and
$\beta^*:\g\otimes\g\longrightarrow\g$ is the pre-Lie algebra
structure on $\g$, $\ad=L-R$   denotes the adjoint representation.
\end{defi}

\begin{defi}{\rm(\cite{Left-symmetric bialgebras})}
A pre-Lie bialgebra $(\g,\g^*)$  is said to be {\bf coboundary} if there exists an $r\in\g\otimes\g$ such that
\begin{equation}\label{eq:coboundary}\alpha(x)=(L_x\otimes1+1\otimes\ad_x)r,\quad \forall~x\in\g.\end{equation}
\end{defi}
}

Let $\g$ be a vector space. 
For any $r\in\Sym^2(\g)$, the linear map $r^\sharp:\g^*\longrightarrow \g$ is given by
$\langle r^\sharp(\xi),\eta\rangle=r(\xi,\eta),$ for all $~\xi,\eta\in\g^*$. We say that $r\in\Sym^2(\g)$ is invertible, if the linear map $r^\sharp$ is an isomorphism.

Let $(\g,\cdot_\g)$ be a pre-Lie algebra and $r\in\Sym^2(\g)$. We introduce $\llbracket r,r\rrbracket\in\wedge^2\g\otimes\g$ as follows
\begin{equation}\label{S-equation1}
\llbracket r,r\rrbracket(\xi,\eta,\zeta)=-\langle\xi,r^\sharp(\eta)\cdot_\g r^\sharp(\zeta)\rangle+\langle\eta,r^\sharp(\xi)\cdot_\g r^\sharp(\zeta)\rangle+\langle\zeta,[r^\sharp(\xi),r^\sharp(\eta)]^c\rangle,\quad\forall~\xi,\eta,\zeta\in\g^*.
\end{equation}

\begin{defi}{\rm(\cite{Left-symmetric bialgebras})} Let
$(\g,\cdot_\g)$ be a pre-Lie algebra.  If $r\in\Sym^2(\g)$ and
satisfies $\llbracket r,r\rrbracket=0$, then $r$ is called an {\bf
$\frks$-matrix} in $(\g,\cdot_\g)$.
\end{defi}

\begin{rmk}
An $\frks$-matrix in a pre-Lie algebra $(\g, \cdot_\g)$ is a
solution of the $S$-equation in $(\g, \cdot_\g)$ which is an
analogue of the classical Yang-Baxter equation in a Lie algebra.
It plays an important role in the theory of pre-Lie bialgebras,
see \cite{Left-symmetric bialgebras} for more details.
\end{rmk}

Let $(\frkg,\cdot_\frkg)$ be a pre-Lie algebra and $r$ an $\frks$-matrix. Then $(\g^*,\cdot_r)$ is a pre-Lie algebra, where the multiplication $\cdot_r:\otimes ^2\g^*\longrightarrow\g^*$ is  given by
\begin{equation}\label{eq:pre-bia}
\xi\cdot_{r}\eta=\ad^*_{r^\sharp(\xi)}\eta-R^*_{r^\sharp(\eta)}\xi,\quad \forall~\xi,\eta\in\g^*.
\end{equation}

\begin{pro}\label{pro:morphism}
With the above notations,
for all $\xi,\eta\in\g^*$, we have
\begin{equation}
r^\sharp(\xi\cdot_r \eta)-r^\sharp(\xi)\cdot_\g r^\sharp(\eta)=0.
\end{equation}
\end{pro}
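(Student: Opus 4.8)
The plan is to verify the identity by pairing both sides with an arbitrary covector $\zeta \in \g^*$ and then unwinding the definitions of $\cdot_r$, $r^\sharp$, $\ad^*$, $R^*$, and finally using the hypothesis $\llbracket r,r\rrbracket = 0$. Concretely, write $a = r^\sharp(\xi)$, $b = r^\sharp(\eta)$ for brevity. First I would compute $\langle \zeta, r^\sharp(\xi \cdot_r \eta)\rangle$. Since $r$ is symmetric, $\langle \zeta, r^\sharp(\mu)\rangle = \langle \mu, r^\sharp(\zeta)\rangle$ for any $\mu \in \g^*$, so this equals $\langle \xi\cdot_r\eta, r^\sharp(\zeta)\rangle$. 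Now substitute \eqref{eq:pre-bia}: $\xi\cdot_r\eta = \ad^*_{a}\eta - R^*_{b}\xi$, and use the defining formulas for $\ad^* = L^* - R^*$ and $R^*$ paired against $r^\sharp(\zeta)$. This produces three terms of the form $\langle \eta, a\cdot_\g r^\sharp(\zeta)\rangle$, $\langle \eta, r^\sharp(\zeta)\cdot_\g a\rangle$, $\langle \xi, r^\sharp(\zeta)\cdot_\g b\rangle$ (with appropriate signs).

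Next I would compute the second term, $\langle \zeta, a\cdot_\g b\rangle = \langle \zeta, r^\sharp(\xi)\cdot_\g r^\sharp(\eta)\rangle$, which is exactly the first term appearing in \eqref{S-equation1} with its sign. The key step is then to recognize that the difference $\langle \zeta, r^\sharp(\xi\cdot_r\eta)\rangle - \langle \zeta, r^\sharp(\xi)\cdot_\g r^\sharp(\eta)\rangle$, after collecting all terms, is precisely $\llbracket r,r\rrbracket(\xi,\eta,\zeta)$ up to reorganizing using the symmetry of $r$ (to swap, e.g., $\langle \xi, r^\sharp(\zeta)\cdot_\g b\rangle$ into a term involving $r^\sharp(\xi)$) and the definition of the commutator $[\cdot,\cdot]^c$ in $\g^c$. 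Once the bookkeeping matches the right-hand side of \eqref{S-equation1}, the hypothesis $\llbracket r,r\rrbracket = 0$ finishes the argument, and since $\zeta$ was arbitrary and the pairing is nondegenerate, we conclude $r^\sharp(\xi\cdot_r\eta) - r^\sharp(\xi)\cdot_\g r^\sharp(\eta) = 0$.

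The main obstacle is purely a matter of careful sign and index tracking: the formula \eqref{S-equation1} for $\llbracket r,r\rrbracket$ is not manifestly symmetric in the roles of $\xi, \eta, \zeta$, so one must be attentive to which slot each covector lands in, and must invoke the symmetry $r(\xi,\eta) = r(\eta,\xi)$ at exactly the right moments to convert terms like $\langle \xi, r^\sharp(\zeta)\cdot_\g r^\sharp(\eta)\rangle$ back and forth. There is no deep input beyond this — the result is essentially the statement that an $\frks$-matrix $r$ makes $r^\sharp$ a morphism of pre-Lie algebras from $(\g^*, \cdot_r)$ to $(\g, \cdot_\g)$, and the $S$-equation $\llbracket r,r\rrbracket = 0$ is exactly the obstruction to this morphism property. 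I expect the whole verification to be a half-page computation once the conventions are pinned down.
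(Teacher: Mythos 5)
Your proposal is correct; the paper states this proposition without proof, and your direct pairing-with-$\zeta$ verification is exactly the intended argument. For the record, writing $a=r^\sharp(\xi)$, $b=r^\sharp(\eta)$, $c=r^\sharp(\zeta)$, the computation gives
\begin{equation*}
\langle\zeta,\,r^\sharp(\xi\cdot_r\eta)-a\cdot_\g b\rangle
=-\langle\eta,[a,c]^c\rangle+\langle\xi,c\cdot_\g b\rangle-\langle\zeta,a\cdot_\g b\rangle
=-\llbracket r,r\rrbracket(\xi,\zeta,\eta),
\end{equation*}
so the obstruction is a \emph{permuted} instance of the $S$-equation rather than $\llbracket r,r\rrbracket(\xi,\eta,\zeta)$ itself --- precisely the slot-tracking issue you flagged --- and it vanishes because $r$ is an $\frks$-matrix.
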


\begin{pro}\label{pro:Hessian1}{\rm(\cite{Left-symmetric
bialgebras})} Let $(\frkg,\cdot_\frkg)$ be a pre-Lie algebra and
$r$ an invertible $\frks$-matrix. Define $\frkB\in \Sym^2(\g^*)$
by
\begin{equation}\label{eq:rB}
\frkB(x,y)=\langle x,(r^\sharp)^{-1}(y)\rangle,\quad \forall~x,y\in\g.
\end{equation}
Then $(\g,\cdot_\g,\frkB)$ is a pseudo-Hessian pre-Lie algebra.
\end{pro}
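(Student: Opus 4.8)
The plan is to verify the three conditions that make $\frkB$ a pseudo-Hessian structure on $(\g,\cdot_\g)$: that $\frkB$ is symmetric, nondegenerate, and a $2$-cocycle for the trivial representation, i.e. $\dM^T\frkB=0$.

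\emph{Symmetry and nondegeneracy.} These are immediate. Since $r\in\Sym^2(\g)$, the map $r^\sharp$ obeys $\langle r^\sharp(\xi),\eta\rangle=r(\xi,\eta)=r(\eta,\xi)=\langle r^\sharp(\eta),\xi\rangle$; taking $\xi=(r^\sharp)^{-1}(x)$ and $\eta=(r^\sharp)^{-1}(y)$ yields $\frkB(x,y)=\langle x,(r^\sharp)^{-1}(y)\rangle=\langle y,(r^\sharp)^{-1}(x)\rangle=\frkB(y,x)$, so $\frkB\in\Sym^2(\g^*)$. Nondegeneracy holds because the map $y\mapsto\frkB(\cdot,y)$ is, through the (nondegenerate) duality pairing, just the isomorphism $(r^\sharp)^{-1}$.

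\emph{The cocycle condition.} The key observation is that $\dM^T\frkB=0$, that is
\[
\frkB(x\cdot_\g y,z)-\frkB(x,y\cdot_\g z)=\frkB(y\cdot_\g x,z)-\frkB(y,x\cdot_\g z),\qquad\forall\,x,y,z\in\g,
\]
is nothing but the $\frks$-matrix equation $\llbracket r,r\rrbracket=0$ rewritten in the variables $x=r^\sharp(\xi)$, $y=r^\sharp(\eta)$, $z=r^\sharp(\zeta)$. Indeed, since $r^\sharp$ is a bijection every triple in $\g^{3}$ arises in this way, and for any $w\in\g$ one has $\langle(r^\sharp)^{-1}(x),w\rangle=\frkB(w,x)=\frkB(x,w)$ by the symmetry of $\frkB$ just established. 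Substituting these identities, together with $[r^\sharp(\xi),r^\sharp(\eta)]^c=x\cdot_\g y-y\cdot_\g x$, into the definition \eqref{S-equation1} of $\llbracket r,r\rrbracket$ turns $\llbracket r,r\rrbracket(\xi,\eta,\zeta)=0$ into
\[
-\frkB(x,y\cdot_\g z)+\frkB(y,x\cdot_\g z)+\frkB(x\cdot_\g y,z)-\frkB(y\cdot_\g x,z)=0,
\]
which is precisely the displayed $2$-cocycle identity. Hence $\dM^T\frkB=0$ and $(\g,\cdot_\g,\frkB)$ is a pseudo-Hessian pre-Lie algebra.

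I do not expect a genuine obstacle: the only care needed is in tracking the conventions for the duality pairing and for $r^\sharp$, and in invoking the symmetry of $r$ at the right moments. As an alternative to the direct substitution, one can instead transport $\dM^T\frkB=0$ along the pre-Lie algebra isomorphism $r^\sharp\colon(\g^*,\cdot_r)\to(\g,\cdot_\g)$ provided by Proposition \ref{pro:morphism}, reducing it to the assertion that $r$, regarded as a symmetric bilinear form on $\g^*$, is a $2$-cocycle for $\cdot_r$; this is again equivalent to $\llbracket r,r\rrbracket=0$ after unwinding \eqref{eq:pre-bia}, but the first route is shorter and self-contained.
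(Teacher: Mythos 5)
Your proof is correct. Note that the paper itself does not prove this proposition: it is quoted from \cite{Left-symmetric bialgebras} without argument, so there is no in-paper proof to compare against. Your verification is exactly the natural one — symmetry and nondegeneracy of $\frkB$ follow from $r\in\Sym^2(\g)$ and the invertibility of $r^\sharp$, and the $2$-cocycle identity $\dM^T\frkB=0$ is the $S$-equation \eqref{S-equation1} rewritten under the substitution $x=r^\sharp(\xi)$, $y=r^\sharp(\eta)$, $z=r^\sharp(\zeta)$, using $\langle (r^\sharp)^{-1}(x),w\rangle=\frkB(x,w)$ and the splitting of the commutator term $\langle\zeta,[r^\sharp(\xi),r^\sharp(\eta)]^c\rangle$ into $\frkB(x\cdot_\g y,z)-\frkB(y\cdot_\g x,z)$. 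The bookkeeping is right, and the surjectivity of $r^\sharp$ is correctly invoked to get the identity for all triples in $\g$; the alternative route via Proposition \ref{pro:morphism} that you sketch would also work but is not needed.
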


Now we extend the equation $\eqref{S-equation1}$ to any two elements $r_1,r_2\in\Sym^2(\g)$ as follows:
\begin{eqnarray}
\llbracket r_1,r_2\rrbracket(\xi,\eta,\zeta)&=&\half\big(-\langle\xi,r^\sharp_1(\eta)\cdot_\g r^\sharp_2(\zeta)+r^\sharp_2(\eta)\cdot_\g r^\sharp_1(\zeta)\rangle+\langle\eta,r^\sharp_1(\xi)\cdot_\g r^\sharp_2(\zeta)+r^\sharp_2(\xi)\cdot_\g r^\sharp_1(\zeta)\rangle\nonumber\\
&&+\langle\zeta,[r^\sharp_1(\xi),r^\sharp_2(\eta)]^c+[r^\sharp_2(\xi),r^\sharp_1(\eta)]^c\rangle\big).\label{eq:S-equation2}
\end{eqnarray}
It is easy to see that $\llbracket r_1,r_2\rrbracket\in \wedge^2\g\otimes\g$ and
\begin{eqnarray*}
\llbracket r_1,r_2\rrbracket&=&\llbracket r_2,r_1\rrbracket,\\
\llbracket r_1,\lambda r_2\rrbracket&=&\lambda\llbracket r_1,r_2\rrbracket,\quad \forall \lambda\in {\mathbb K}, \\
\llbracket r_1+r_2,r_1+r_2\rrbracket&=&\llbracket r_1,r_1\rrbracket+2\llbracket r_1,r_2\rrbracket+\llbracket r_2,r_2\rrbracket.
\end{eqnarray*}

\begin{defi}\label{defi:cs}
Let $r_1$ and $r_2$ be two $\frks$-matrices. $r_1$ and $r_2$ are called {\bf compatible} if $\llbracket r_1,r_2\rrbracket=0.$
\end{defi}
  It is obvious that $r_1$ and $r_2$ are compatible if and only
if for all  $t_1,t_2\in \mathbb K$, $t_1r_1+t_2
r_2$ are $\frks$-matrices.

\subsection{The Gerstenhaber
bracket, (graded) pre-Lie algebras and associative algebras}

Let $\g$ be a vector space. Denote by
$A^p(\g,\g)=\Hom(\otimes^{p+1}\g,\g)$ and set
$A(\g,\g)=\oplus_{p\in\Nat}A^p(\g,\g).$ We assume that the degree
of an element in $A^p(\g,\g)$ is $p$. Let $P\in A^p(\g,\g),Q\in
A^q(\g,\g),\quad p,q\geq0$ and $x_i\in \g,\quad
i=1,2,\ldots,p+q+1$. Then we have
\begin{thm}\rm(\cite{Gerstenhaber1})
The graded vector space $A(\g,\g)$ equipped with the graded commutator bracket
\begin{equation}
[P,Q]^G=P\circ Q-(-1)^{pq}Q\circ P,
\end{equation}
is a graded Lie algebra, where $P\circ Q\in A^{p+q}(\g,\g)$ is defined by
\begin{eqnarray*}
P\circ Q(x_1,x_2,\ldots,x_{p+q+1})=\sum_{i=1}^{p+1}(-1)^{(i-1)q}P(x_1,\ldots,x_{i-1},Q(x_{i},\ldots,x_{i+q}),x_{i+q+1},\ldots,x_{p+q+1}).
\end{eqnarray*}
Furthermore, $\pi\in\Hom(\otimes^2\g,\g)$ is an associative algebra if and only if $[\pi,\pi]^G=0$. This bracket is usually called the {\bf Gerstenhaber bracket}.
\end{thm}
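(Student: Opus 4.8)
The plan is to follow Gerstenhaber's original strategy, whose heart is the observation that the composition product $\circ$ equips $A(\g,\g)$ with a graded \emph{right} pre-Lie (right-symmetric) algebra structure, after which the graded Jacobi identity for the commutator bracket is a purely formal consequence.

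\textbf{Step 1 (graded pre-Lie identity for $\circ$).} For $P\in A^p(\g,\g)$, $Q\in A^q(\g,\g)$, $R\in A^r(\g,\g)$, I would expand the double composite $(P\circ Q)\circ R$ directly from the definition. Inserting the $R$-block into $P\circ Q$ produces two kinds of terms: \emph{nested} ones, in which the $R$-block lands inside one of the $q+1$ arguments of the $Q$-block, and \emph{disjoint} ones, in which the $Q$-block and the $R$-block occupy two non-overlapping runs of consecutive slots among the arguments of $P$. Collecting the nested terms with their signs reassembles exactly $P\circ(Q\circ R)$, so that $(P\circ Q)\circ R - P\circ(Q\circ R)$ equals the sum of the disjoint terms. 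Since that sum is a double sum over (unordered) pairs of disjoint slot-runs in $P$ — one filled by $Q$, the other by $R$ — it is symmetric in $Q$ and $R$ up to the Koszul sign $(-1)^{qr}$; hence
\[
(P\circ Q)\circ R - P\circ(Q\circ R) = (-1)^{qr}\big((P\circ R)\circ Q - P\circ(R\circ Q)\big),
\]
which is the graded right-symmetry of $\circ$. This sign-and-index bookkeeping is the one genuinely computational point, and I expect it to be the main obstacle; everything else is formal.

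\textbf{Step 2 (graded Lie algebra).} Graded antisymmetry $[P,Q]^G = -(-1)^{pq}[Q,P]^G$ is immediate from the definition. Granting Step 1, the graded Jacobi identity
\[
(-1)^{pr}[[P,Q]^G,R]^G + (-1)^{qp}[[Q,R]^G,P]^G + (-1)^{rq}[[R,P]^G,Q]^G = 0
\]
follows by expanding each double bracket into four $\circ$-composites and cancelling them in pairs using the relation of Step 1 — this is exactly the graded version of the classical fact that the commutator of a right pre-Lie algebra is a Lie bracket. Thus $(A(\g,\g),[\cdot,\cdot]^G)$ is a graded Lie algebra.

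\textbf{Step 3 (associativity).} For $\pi\in A^1(\g,\g)=\Hom(\otimes^2\g,\g)$ we have $[\pi,\pi]^G = 2\,\pi\circ\pi$, and expanding the definition of $\circ$ gives
\[
\pi\circ\pi(x_1,x_2,x_3) = \pi(\pi(x_1,x_2),x_3) - \pi(x_1,\pi(x_2,x_3)),
\]
which is precisely the associator of $\pi$. Hence $[\pi,\pi]^G = 0$ if and only if $\pi$ is associative, completing the proof.
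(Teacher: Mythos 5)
The paper does not actually prove this theorem---it is quoted from Gerstenhaber's original article---so there is no in-paper argument to compare against; your proof is the standard one and is correct, with Step 1 being precisely the content of the paper's immediately following remark that $(A(\g,\g),\circ)$ is a graded pre-Lie algebra, and Steps 2 and 3 routine consequences. One point worth flagging in your favour: the associator of the insertion product is symmetric, up to the Koszul sign $(-1)^{qr}$, in its \emph{last} two arguments (graded right-symmetry), exactly as you state, whereas the paper's remark writes the symmetry on the first two arguments to match its left-symmetric convention for pre-Lie algebras; your version is the one that actually holds for $\circ$ as defined, as the case $p=1$, $q=r=0$ (where the disjoint terms are $P(Q(x_1),R(x_2))+P(R(x_1),Q(x_2))$, visibly symmetric in $Q$ and $R$ but not reproducible from the associator with $P$ and $Q$ interchanged) confirms.
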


Note that $(A(\g,\g),\circ)$ is a graded pre-Lie algebra,
that is,
$$(x,y,z)=(-1)^{pq}(y,x,z),\quad \forall x\in A^p(\g,\g), y\in
A^q(\g,\g), z\in A^r(\g,\g).$$

Assume that $(\g,\ast)$ is an associative algebra and set
$\pi(x,y)=x\ast y$. Then the Hochschild coboundary operator
$\dM^H:A^{p-1}(\g,\g)\longrightarrow A^p(\g,\g)$ which is given by
\begin{eqnarray*}
\dM^{H}P(x_1,\ldots,x_{p+1})&=&x_1\ast P(x_2,\ldots,x_{p+1})+\sum_{i=1}^p(-1)^iP(x_1,\ldots,x_{i-1},x_i\ast x_{i+1},\ldots,x_{p+1})\\
&&+(-1)^{p+1}P(x_1,\ldots,x_p)\ast x_{p+1}
\end{eqnarray*}
can be rewritten in a simple form $\dM^H(P)=(-1)^{p-1}[\pi,P]^G$ for $P\in A^{p-1}(\g,\g)$.

\emptycomment{\subsection{Nijenhuis-Richardson brackets and Lie algebras} \yh{Now I am wondering whether we need to keep this subsection in the paper. Maybe just add a remark after we introduce the bracket for pre-Lie algebras. }Define a {\bf alternator} $\bar{\alpha}:A(V,V)\longrightarrow A(V,V)$ by
\begin{equation}
\bar{\alpha}(P)(x_1,x_2,\ldots,x_{p+1})=\frac{1}{(p+1)!}\sum_{\sigma\in S_{(p+1)}}\sgn(\sigma)P(x_{\sigma(1)},\ldots,x_{\sigma(p)},x_{\sigma(p+1)}),\quad P\in A^p(V,V).
\end{equation}
We set
\begin{eqnarray*}
\bar{C}(V,V)&=&\bigoplus_{i\in\Nat}\bar{C}^p(V,V)\\
&=&\bigoplus_{i\in\Nat}\bar{\alpha}(A^p(V,V)).
\end{eqnarray*}

For the graded subspace $\bar{C}(V,V)$ of ${A}(V,V)$, define the binary operation $\ast:\bar{C}(V,V)\times \bar{C}(V,V)\longrightarrow \bar{C}(V,V)$ and the corresponding commutator bracket by
\begin{eqnarray*}
P\ast Q:&=&\frac{(p+q+1)!}{(p+1)!(q+1)!}\alpha(P\circ Q),\quad P\in \bar{C}^p(V,V),Q\in \bar{C}^q(V,V),\\
{[P,Q]^{NR}}&=&\frac{(p+q+1)!}{(p+1)!(q+1)!}\alpha([P, Q]^G)\\
&=&P\ast Q-(-1)^{pq}Q\ast P.
\end{eqnarray*}
This bracket is called the Nijenhuis-Richardson bracket.
The importance of the above bracket indicates the
following observation which determines the Lie algebra
structure, together with the corresponding cohomology.
\begin{thm}
The bracket $[\cdot,\cdot]^{NR}$ is a graded Lie bracket of degree $-1$ on $\bar{C}(V,V)$. Moreover, $\pi\in\bar{C}^1(V,V)$ defines an Lie algebra on $V$ if and only if $\pi$ satisfies ${[\pi,\pi]}^{NR}=0$. In this situation, we say that Lie algebras are canonical structures for the Nijenhuis-Richardson bracket. The adjoint map $\dM^{CE}:\bar{C}(V,V)\longrightarrow \bar{C}(V,V)$ \yh{bad notation} given by
\begin{equation}
\dM^{CE}(\bar{P})=(-1)^{p}[\pi,\bar{P}]^{NR},\quad \bar{P}\in\bar{C}^p(V,V)
\end{equation}
is homogeneous of degree $1$ and satisfies ${\dM^{CE}}^2=0.$ Thus it defines a cohomology, called the Chevalley-Eilenberg cohomology. More precisely, for a Lie algebra $(V,[\cdot,\cdot])$, Chevalley-Eilenberg cohomology $\dM^{CE}$ is given by
\begin{eqnarray*}
\dM^{CE}\bar{P}(x_1,\ldots,x_{p+1})&=&\sum_{i=1}^{p+1}[x_i,\bar{P}(x_1,\ldots,\hat{x}_i,x_{i+1},\ldots,x_{p+1})]\\
&&+\sum_{i<j}\bar{P}([x_i,x_j],x_1,\ldots,\hat{x}_i,\ldots,\hat{x}_j,\ldots,x_{p+1}).
\end{eqnarray*}
\end{thm}
We denote the $n$-cohomology group of the Chevalley-Eilenberg cohomology $\dM^{CE}$ by $H_{\dM^{CE}}^n(V,V)$ and $H_{\dM^{CE}}(V,V)=\oplus_{n}H_{\dM^{CE}}^n(V,V)$..}

\section{Generalized Gerstenhaber
bracket and pre-Lie algebras}

  With the notations in Subsection 2.3, define  an {\bf alternator} $\alpha:A(\g,\g)\longrightarrow A(\g,\g)$ by
\begin{equation}
\alpha(P)(x_1,\ldots,x_{p+1})=\frac{1}{p!}\sum_{\sigma\in S_p}\sgn(\sigma)P(x_{\sigma(1)},\ldots,x_{\sigma(p)},x_{p+1}),\quad \forall~P\in A^p(\g,\g).
\end{equation}
By direct calculation, we have
$\alpha^2=\alpha,$
which means that the map $\alpha$ is a projection.

Denote by $C^p(\g,\g)=\Hom(\wedge^p\g\otimes  \g,\g)=\alpha(A^p(\g,\g)) $ and  $C(\g,\g)=\bigoplus_{p\in\Nat}C^p(\g,\g)$.
For the graded subspace ${C}(\g,\g)$ of ${A}(\g,\g)$, define the binary operation $\diamond:{C}(\g,\g)\times {C}(\g,\g)\longrightarrow {C}(\g,\g)$ by
 $$
 P\diamond Q=\frac{(p+q)!}{p!q!}\alpha(P\circ Q),\quad \forall~P\in C^p(\g,\g),Q\in C^q(\g,\g).
 $$
 More precisely, we have
\begin{eqnarray*}
&&P\diamond Q(x_1,\ldots,x_{p+q+1})\\
&=&\sum_{\sigma\in\Delta}\sgn(\sigma)P(Q(x_{\sigma(1)},\ldots,x_{\sigma(q)},x_{\sigma(q+1)}),x_{\sigma(q+2)},\ldots,x_{\sigma(p+q)},x_{p+q+1})\\
&&+(-1)^{pq}\sum_{\sigma\in (p,q)-unshuffles}\sgn(\sigma)P(x_{\sigma(1)},\ldots,x_{\sigma(p)},Q(x_{\sigma(p+1)},\ldots,x_{\sigma(p+q)},x_{p+q+1})),
\end{eqnarray*}
where $\sigma\in\Delta$ means $\sigma(1)<\ldots<\sigma(q),\sigma(q+2)<\ldots<\sigma(p+q).$

Denote by $[\cdot,\cdot]^C$ the corresponding commutator bracket given by
\begin{eqnarray}\label{eq:Nij-preLie}
{[P,Q]^C}&=&P\diamond Q-(-1)^{pq}Q\diamond P=\frac{(p+q)!}{p!q!}\alpha([P, Q]^G).
\end{eqnarray}

\begin{thm}\label{thm:lsymNR}
With the above notations, we have
\begin{itemize}
\item[\rm(i)] $(C(\g,\g),{[\cdot,\cdot]^C})$ is a graded Lie algebra;
\item[\rm(ii)]$\pi\in\Hom(\otimes^2\g,\g)$ defines a pre-Lie algebra if and only if $[\pi,\pi]^C=0.$
\end{itemize}
\end{thm}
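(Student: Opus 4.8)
The plan is to obtain both parts from the fact that $(A(\g,\g),[\cdot,\cdot]^G)$ is already a graded Lie algebra (the Gerstenhaber bracket), using only that $\alpha$ is an idempotent linear map with image $C(\g,\g)$. The engine of the argument is the identity
$$\alpha\big([\alpha(X),\alpha(Z)]^G\big)=\alpha\big([X,Z]^G\big),\qquad \forall~X,Z\in A(\g,\g),$$
which I would establish first. Because $[\cdot,\cdot]^G$ is assembled from $\circ$, this identity is equivalent to the pair $\alpha\big(\alpha(X)\circ Z\big)=\alpha(X\circ Z)$ and $\alpha\big(Z\circ\alpha(X)\big)=\alpha(Z\circ X)$, i.e.\ to $\alpha(Y\circ Z)=0=\alpha(Z\circ Y)$ for every $Y\in\Ker\alpha$. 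To see this I would expand $Y\circ Z$ (resp.\ $Z\circ Y$) over the slots into which $Z$ is inserted, apply the outer alternator, and regroup the resulting sum over the symmetric group: in each summand one finds either that the last argument of $Y$ is held fixed while the first $|Y|$ slots of $Y$ run over all orderings with signs, or that the block fed to $Y$ occupies a fixed slot of $Z$ and again the first $|Y|$ inputs of $Y$ are fully antisymmetrized; in both situations the inner sum is $0$ since $Y\in\Ker\alpha$. The one delicate point — and the real obstacle of the theorem — is the bookkeeping of the signs $\sgn\sigma$ so that they reassemble precisely into the alternator of $Y$; everything else is formal.

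Granting this identity, part (i) follows. Graded skew-symmetry of $[\cdot,\cdot]^C$ is immediate from $[P,Q]^G=-(-1)^{pq}[Q,P]^G$ together with the symmetry of $\tfrac{(p+q)!}{p!q!}$, and $[P,Q]^C=\tfrac{(p+q)!}{p!q!}\alpha([P,Q]^G)\in C^{p+q}$, so the bracket closes on $C(\g,\g)$. For the graded Jacobi identity, expanding the definition twice (and using bilinearity of $[\cdot,\cdot]^G$) gives
$$[[P,Q]^C,R]^C=\frac{(p+q+r)!}{p!\,q!\,r!}\,\alpha\big([\alpha([P,Q]^G),R]^G\big),$$
and likewise for the two cyclic permutations of $(P,Q,R)$, all with the \emph{same} scalar factor. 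Since $R\in C^r$ means $\alpha(R)=R$, the engine identity turns each $\alpha\big([\alpha([P,Q]^G),R]^G\big)$ into $\alpha\big([[P,Q]^G,R]^G\big)$; hence the graded Jacobiator of $[\cdot,\cdot]^C$ equals $\tfrac{(p+q+r)!}{p!q!r!}$ times $\alpha$ applied to the graded Jacobiator of $[\cdot,\cdot]^G$, which vanishes.

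For part (ii): since $S_1$ is trivial, $\alpha$ is the identity on $A^1(\g,\g)=\Hom(\otimes^2\g,\g)$, and for $\pi\in\Hom(\otimes^2\g,\g)$ one has $[\pi,\pi]^G=2\,\pi\circ\pi$, hence $[\pi,\pi]^C=\tfrac{2!}{1!1!}\alpha([\pi,\pi]^G)=4\,\alpha(\pi\circ\pi)$. A one-line evaluation gives $\pi\circ\pi(x_1,x_2,x_3)=\pi(\pi(x_1,x_2),x_3)-\pi(x_1,\pi(x_2,x_3))=(x_1,x_2,x_3)$, the associator of $\pi$, so $\alpha(\pi\circ\pi)(x_1,x_2,x_3)=\tfrac12\big((x_1,x_2,x_3)-(x_2,x_1,x_3)\big)$. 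Therefore $[\pi,\pi]^C=0$ if and only if the associator of $\pi$ is symmetric in its first two slots, i.e.\ if and only if $\pi$ is a pre-Lie multiplication.
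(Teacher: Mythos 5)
Your proposal is correct and follows essentially the same route as the paper: the core of both arguments is the identity $\alpha(P\circ Q)=\alpha(\alpha(P)\circ\alpha(Q))$ (your formulation via $\alpha(Y\circ Z)=0=\alpha(Z\circ Y)$ for $Y\in\Ker\alpha$ is equivalent to it), from which the graded Jacobi identity for $[\cdot,\cdot]^C$ is pulled back to that of the Gerstenhaber bracket with the same combinatorial factor $\tfrac{(p+q+r)!}{p!q!r!}$, and part (ii) is the same direct evaluation of $[\pi,\pi]^C$ as twice the skew-symmetrized associator. Note that the paper likewise leaves the key identity as a ``long but straightforward computation,'' so your sign-bookkeeping sketch is at a comparable level of detail.
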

\pf (i)  It is obvious that $[\cdot,\cdot]^C$ is graded commutative.
By a long but straightforward computation, we can show that for $P\in A^p(\g,\g),Q\in A^q(\g,\g)$,
 $$\alpha(P\circ Q)=\alpha(\alpha(P)\circ \alpha(Q)).$$
 Thus for $P\in C^p(\g,\g),Q\in C^q(\g,\g),R\in C^r(\g,\g)$, by \eqref{eq:Nij-preLie}, we have
\begin{eqnarray*}
\alpha([[P,Q]^G,R]^G)&=&\alpha\big([P,Q]^G\circ R-(-1)^{(p+q)r}R\circ [P,Q]^G\big)\\
&=&\alpha\big(\alpha([P,Q]^G)\circ\alpha( R)-(-1)^{(p+q)r}\alpha(R)\circ \alpha([P,Q]^G)\big)\\
&=&\frac{(p+q)!r!}{(p+q+r)!}[\alpha([P,Q]^G),\alpha(R)]^C\\
&=&\frac{p!q!r!}{(p+q+r)!}[[P,Q]^C,R]^C.
\end{eqnarray*}
Thus, the graded Jacobi identity for the bracket $[\cdot,\cdot]^C$ follows directly from the graded Jacobi identity for the bracket $[\cdot,\cdot]^G$.

(ii) By direct calculation, we have
\begin{eqnarray*}
\half[\pi,\pi]^C(x_1,x_2,x_3)&=&\pi\diamond\pi(x_1,x_2,x_3)\\
&=&\pi(\pi(x_1,x_2),x_3)-\pi(\pi(x_2,x_1),x_3)-\pi(x_1,\pi(x_2,x_3))+\pi(x_2,\pi(x_1,x_3)),
\end{eqnarray*}
which implies that $\pi$ defines a pre-Lie algebra if and only if $[\pi,\pi]^C=0$.\qed

\begin{rmk} By a similar proof as that of the above conclusion
(i), $(C(\g,\g),\diamond)$ is also a graded pre-Lie algebra.
The graded Lie algebra given above is exactly the graded Lie
algebra given in \cite{ChaLiv} using the operad theory if the
left-symmetry of the associator is replaced by the right-symmetry. Here
the graded Lie algebra is given directly and explicitly through
the skew-symmetrization of the Gerstenhaber bracket.
\end{rmk}

Let $(\g,\pi)$ be a pre-Lie algebra. By Theorem \ref{thm:lsymNR}, we have $[\pi,\pi]^C=0$. Because of the graded Jacobi identity, we get a coboundary operator ${\delta}:C^n(\g,\g)\longrightarrow C^{n+1}(\g,\g)$ defined by
\begin{equation}\label{eq:cob}
{\delta}(\phi)=(-1)^n[\pi,\phi]^C,\quad\forall~\phi\in C^n(\g,\g).
\end{equation}

By straightforward computation, we have
\begin{pro}
For all $\phi\in C^{n-1}(\g,\g)$, we have
\begin{eqnarray}\label{eq:pre-Lie cohomology2}
\nonumber{\delta}\phi(x_1,\cdots,x_{n+1})&=&\sum_{i=1}^{n}(-1)^{i+1}\pi(x_i,\phi(x_1,\cdots,\hat{x_i},\cdots,x_{n+1}))\\
&&+\sum_{i=1}^{n}(-1)^{i+1}\pi(\phi(x_1,\cdots,\hat{x_i},\cdots,x_n,x_i), x_{n+1})\\
\nonumber &&-\sum_{i=1}^{n}(-1)^{i+1}\phi(x_1,\cdots,\hat{x_i},\cdots,x_n,\pi(x_i, x_{n+1}))\\
\nonumber &&+\sum_{1\leq i<j\leq n}(-1)^{i+j}\phi(\pi(x_i, x_j)-\pi(x_j, x_i),x_1,\cdots,\hat{x_i},\cdots,\hat{x_j},\cdots,x_{n+1}).
\end{eqnarray}
Thus, the coboundary operator given by \eqref{eq:cob} is
  exactly  the one given by \eqref{eq:cobold} for the regular
representation $(L,R)$, i.e. $\delta=\dr$.
\end{pro}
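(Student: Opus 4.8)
The plan is to unwind the definition $\delta(\phi)=(-1)^{n-1}[\pi,\phi]^C$ from \eqref{eq:cob} (here $\pi$ has degree $1$ and $\phi\in C^{n-1}(\g,\g)$ has degree $n-1$), write $[\pi,\phi]^C=\pi\diamond\phi-(-1)^{n-1}\phi\diamond\pi$, and expand each $\diamond$-product using the explicit formula displayed just before Theorem~\ref{thm:lsymNR}. Each product contributes a ``$\Delta$-sum'' and an ``unshuffle-sum'', giving four pieces to be matched against the four lines of \eqref{eq:pre-Lie cohomology2}.

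First I would treat $\pi\diamond\phi$, where $p=1$ and $q=n-1$. Since $p-1=0$ the $\Delta$-sum has no interior arguments: it is a sum over the single index $i=\sigma(q+1)$ placed in the last $\phi$-slot, the complementary indices being forced into increasing order, and the corresponding permutation has sign $(-1)^{n-i}$; thus this sum equals $\sum_{i=1}^{n}(-1)^{n-i}\pi\big(\phi(x_1,\dots,\hat{x_i},\dots,x_n,x_i),x_{n+1}\big)$. The $(1,n-1)$-unshuffle sum, together with its prefactor $(-1)^{pq}=(-1)^{n-1}$, collapses similarly to $(-1)^{n-1}\sum_{i=1}^{n}(-1)^{i-1}\pi\big(x_i,\phi(x_1,\dots,\hat{x_i},\dots,x_{n+1})\big)$. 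Multiplying by the overall $(-1)^{n-1}$ turns these two sums into precisely the second and first lines of \eqref{eq:pre-Lie cohomology2}.

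Next I would treat $\phi\diamond\pi$, where now $p=n-1$ and $q=1$. In its $\Delta$-sum the two indices $\sigma(1),\sigma(2)$ fed into $\pi$ are unconstrained, so I would group the $(\sigma(1),\sigma(2))=(i,j)$ term with the $(j,i)$ term: the two permutations differ by the transposition of the first two slots, hence have opposite sign, so by linearity of $\phi$ in its first argument the grouped term (for $i<j$) is $(-1)^{i+j+1}\phi\big(\pi(x_i,x_j)-\pi(x_j,x_i),x_1,\dots,\hat{x_i},\dots,\hat{x_j},\dots,x_{n+1}\big)$, the sign being the parity of the $(i-1)+(j-2)$ transpositions needed to bring $i,j$ to the front. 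The $(n-1,1)$-unshuffle sum, with prefactor $(-1)^{n-1}$, collapses to $(-1)^{n-1}\sum_{i=1}^{n}(-1)^{n-i}\phi\big(x_1,\dots,\hat{x_i},\dots,x_n,\pi(x_i,x_{n+1})\big)$. Inserting these with the extra sign $-(-1)^{n-1}$ coming from the graded commutator and then multiplying by the overall $(-1)^{n-1}$ produces the fourth and third lines of \eqref{eq:pre-Lie cohomology2}, completing the formula. The equality $\delta=\dr$ then follows by comparing \eqref{eq:pre-Lie cohomology2} with \eqref{eq:cobold} specialized to the regular representation $(\rho,\mu)=(L,R)$: there $\rho(x_i)y=L_{x_i}y=\pi(x_i,y)$, $\mu(x_{n+1})y=R_{x_{n+1}}y=\pi(y,x_{n+1})$, and the last two terms of \eqref{eq:cobold} already agree with those of \eqref{eq:pre-Lie cohomology2}.

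There is no conceptual difficulty here; the only delicate point is the sign bookkeeping, namely keeping straight $\sgn(\sigma)$ for each permutation sum, the prefactors $(-1)^{pq}$, the $-(-1)^{pq}$ in the graded commutator, and the overall $(-1)^{n-1}$, and checking that these global signs combine to exactly reproduce the signs in \eqref{eq:pre-Lie cohomology2}. Two points I would be careful about: the skew-symmetry of $\phi$ in its first $n-1$ arguments is what makes the sum-over-shuffles formula for $\diamond$ valid in this situation, and it is precisely the sign flip under swapping the first two slots, combined with linearity of $\phi$, that converts $\pi(x_i,x_j)$ into the sub-adjacent commutator $\pi(x_i,x_j)-\pi(x_j,x_i)=[x_i,x_j]^c$ appearing in the last term.
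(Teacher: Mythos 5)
Your proposal is correct and is precisely the ``straightforward computation'' the paper invokes without writing out: expand $\delta\phi=(-1)^{n-1}\big(\pi\diamond\phi-(-1)^{n-1}\phi\diamond\pi\big)$ via the explicit $\Delta$-sum/unshuffle-sum formula for $\diamond$, and your sign bookkeeping (the $(-1)^{n-i}$ and $(-1)^{i-1}$ permutation signs, the pairing of $(i,j)$ with $(j,i)$ to produce $[x_i,x_j]^c$ with sign $(-1)^{i+j}$ after the global factors) checks out line by line against \eqref{eq:pre-Lie cohomology2}. The identification $\delta=\dr$ then follows exactly as you say by reading off $(\rho,\mu)=(L,R)$ in \eqref{eq:cobold}.
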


\emptycomment{
The relation of the cohomology group of pre-Lie algebras and the sub-adjacent Lie algebras can be described by the following proposition.
\begin{pro}
Let $(V,\cdot)$ be a pre-Lie algebras. There is a natural map $\bar{\alpha}:C^p(V,V)\longrightarrow \bar{C}^p(V,V)$ inducing a map (with the same symbol) $\bar{\alpha}:H_{\delta}(V,V)\longrightarrow H_{\dM^{CE}}(V,V)$, where the coboundary operators $\delta$ and $\dM^{CE}$ are corresponding to the pre-Lie algebras and its sub-adjacent Lie algebra respectively.
\end{pro}
\pf By direct calculation, we have
$$\bar{\alpha}(P\diamond Q)=c(\bar{\alpha}(P)\ast \bar{\alpha}(Q)),\quad P\in C^p(V,V),Q\in C^q(V,V),$$
where $c$ is a constant dependence on $p$ and $q$.

Let $\pi$ denote the operation $\cdot$ and $\bar{\pi}$ the sub-adjacent Lie algebra structure. Then we have
$$\bar{\alpha}[\pi,P]^N=c[\bar{\pi},\bar{\alpha}(P)]^{NR}.$$
 The proposition follows immediately.\qed
 }

Let $V$ and $W$ be two vector spaces. A linear map $f:V\longrightarrow\gl(W)$ can induce two maps $\bar{f}:\otimes^2(V\oplus W)\longrightarrow V\oplus W$ and $\tilde{f}:\otimes^2(V\oplus W)\longrightarrow V\oplus W$ by
\begin{eqnarray*}
\bar{f}(v_1+w_1,v_2+w_2)&=&f(v_1)(w_2),\\
\tilde{f}(v_1+w_1,v_2+w_2)&=&f(v_2)(w_1),\quad\forall~ v_1,v_2\in V,w_1,w_2\in W.
\end{eqnarray*}

Consider a representation of a pre-Lie algebra, we have
\begin{pro}
Let $(\g,\cdot_\g)$ be a pre-Lie algebra and $V$  a vector space. A pair of linear maps $\rho,\mu:\g\longrightarrow\gl(V)$ is a representation of the pre-Lie algebra $\g$ on $V$ if and only if
\begin{equation}
[\pi+\bar{\rho}+\tilde{\mu},\pi+\bar{\rho}+\tilde{\mu}]^C=0.
\end{equation}
\end{pro}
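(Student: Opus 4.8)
The plan is to reduce the statement to a computation on the pre-Lie algebra $\g\oplus V$ with the "semidirect-type" bilinear map $\widehat{\pi}:=\pi+\bar{\rho}+\tilde{\mu}$, and then extract, from the single equation $[\widehat{\pi},\widehat{\pi}]^C=0$, exactly the three defining conditions of a representation: that $\rho$ is a Lie algebra representation of $\g^c$, that $\mu$ satisfies \eqref{representation condition 2}, together with the already-known fact that $\pi$ itself is pre-Lie. By Theorem \ref{thm:lsymNR}(ii), $\widehat{\pi}$ defines a pre-Lie algebra structure on $\g\oplus V$ if and only if $[\widehat{\pi},\widehat{\pi}]^C=0$, so it suffices to show that $\widehat{\pi}$ is a pre-Lie multiplication on $\g\oplus V$ precisely when $(\rho,\mu)$ is a representation. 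Note first that $\widehat{\pi}$ restricted to $\g\otimes\g$ is $\pi$, that $\widehat{\pi}(x,u)=\rho(x)u$ for $x\in\g,u\in V$ (from $\bar\rho$), that $\widehat{\pi}(u,x)=\mu(x)u$ (from $\tilde\mu$), and that $\widehat{\pi}(u,v)=0$ for $u,v\in V$; the $V$-component of a product always lands in $V$ and the $\g$-component only sees the $\g$-parts, so $V$ is an ideal and $\g$ is a subalgebra.

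Next I would evaluate the associator symmetry $(a,b,c)_{\widehat\pi}=(b,a,c)_{\widehat\pi}$ on the various types of triples from $\g\oplus V$. There are, up to the symmetry being proved, only a few cases to check: (1) all three arguments in $\g$ — this is exactly the pre-Lie identity for $\pi$, already assumed; (2) two arguments in $\g$, one in $V$ — here one must be careful about which slot the $V$-element occupies, since swapping the first two arguments may move it. Writing out $(x,y,u)_{\widehat\pi}$, $(x,u,y)_{\widehat\pi}$, $(u,x,y)_{\widehat\pi}$ for $x,y\in\g$, $u\in V$ and imposing the required equalities yields, after unwinding the definitions of $\bar\rho,\tilde\mu$, two relations: one is $\rho([x,y]^c)=\rho(x)\rho(y)-\rho(y)\rho(x)$ (the Lie-representation condition for $\rho$), and the other is exactly \eqref{representation condition 2} relating $\rho$ and $\mu$; (3) one argument in $\g$, two in $V$ — since $\widehat\pi(u,v)=0$, all associators here vanish identically and the identity holds trivially; (4) all three in $V$ — likewise all terms vanish. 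Conversely, running these computations backwards shows that if $(\rho,\mu)$ is a representation and $\pi$ is pre-Lie, then every instance of the associator symmetry for $\widehat\pi$ holds, hence $[\widehat\pi,\widehat\pi]^C=0$.

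The main obstacle is purely bookkeeping: in case (2) one has to track the signs and the skew-symmetrization implicit in $C(\g,\g)$ versus the plain associator, and keep straight the asymmetric roles of the first two slots (which feed into $\rho$ or the Lie bracket) versus the third slot (which feeds into $\mu$). Concretely, the identity $[\widehat\pi,\widehat\pi]^C=0$ is, via the formula $\tfrac12[\pi,\pi]^C(x_1,x_2,x_3)=\widehat\pi(\widehat\pi(x_1,x_2),x_3)-\widehat\pi(\widehat\pi(x_2,x_1),x_3)-\widehat\pi(x_1,\widehat\pi(x_2,x_3))+\widehat\pi(x_2,\widehat\pi(x_1,x_3))$ established in the proof of Theorem \ref{thm:lsymNR}(ii), equivalent to the associator symmetry, so one just substitutes $x_i\in\g\oplus V$ and reads off the components. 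I expect the whole verification to be a routine but slightly lengthy case analysis, with no conceptual difficulty once the four triple-types are organized; the only genuine content is recognizing that the $\g\times\g\times V$ case is where conditions \eqref{representation condition 2} and the $\rho$-representation property are encoded, and that no further conditions are generated by the other cases.
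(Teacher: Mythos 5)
Your proposal is correct and follows essentially the same route as the paper: identify $\pi+\bar\rho+\tilde\mu$ with the semidirect-product multiplication $\star$ on $\g\oplus V$ and invoke Theorem \ref{thm:lsymNR}(ii), so that the statement reduces to the equivalence between $(\rho,\mu)$ being a representation and $\g\ltimes_{\rho,\mu}V$ being a pre-Lie algebra. The paper simply cites this equivalence as known, whereas you carry out the (correct) case-by-case verification of the associator symmetry; this is just a more explicit version of the same argument.
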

\pf A pair of linear maps $\rho,\mu:\g\longrightarrow\gl(V)$ is a representation of the pre-Lie algebra $\g$ on $V$ if and only if $\g\ltimes_{\rho,\mu} V$ is a pre-Lie algebra, where the pre-Lie algebra structure on $\g\ltimes_{\rho,\mu} V$ is given by
\begin{eqnarray}
\label{eq:mulsemi}(x_1+v_1)\star (x_2+v_2)&=&x_1\cdot_\g x_2+\rho(x_1)v_2+\mu(x_2)v_1\\
&=&\nonumber(\pi+\bar{\rho}+\tilde{\mu})(x_1+v_1,x_2+v_2),\quad \forall~x_1,x_2\in\g,v_1,v_2\in V.
\end{eqnarray}
Thus, by Theorem \ref{thm:lsymNR}, the conclusion follows immediately.\qed

\section{Nijenhuis operators on pre-Lie algebras}\label{sec:Nijenhuis}

Let $(\g,\cdot_\g)$ be a pre-Lie algebra and $N:\g\longrightarrow \g$  a linear map. We also use $\pi:\otimes^2\g\longrightarrow \g$ to denote the multiplication in the pre-Lie algebra, i.e. $\pi(x,y)=x\cdot_\g y$. The deformed structure
$$\pi_N:=[\pi,N]^C$$
defines a bilinear operation on $\g$ which we denote by $\cdot_N$. Explicitly,
\begin{equation}
x\cdot_N y=[\pi,N]^C(x,y),\quad \forall~x,y\in \g.
\end{equation}

\begin{lem}
The operation $\cdot_N$   satisfies
 \begin{equation}\label{eq:LANij}
x\cdot_N y=N(x)\cdot_\g y+x\cdot_\g N(y)-N(x\cdot_\g y),\quad \forall x,y\in \g.
\end{equation}
\end{lem}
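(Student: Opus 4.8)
The plan is to unwind the definition of $[\cdot,\cdot]^C$ in the single bidegree that actually occurs here, namely $(p,q)=(1,0)$, where all the combinatorial normalizations collapse. Regarded as an element of $C^0(\g,\g)=\Hom(\g,\g)=A^0(\g,\g)$, the map $N$ has degree $0$, while $\pi\in C^1(\g,\g)=\Hom(\otimes^2\g,\g)=A^1(\g,\g)$ has degree $1$. Hence, by \eqref{eq:Nij-preLie},
$$[\pi,N]^C=\frac{(1+0)!}{1!\,0!}\,\alpha\big([\pi,N]^G\big)=\alpha\big([\pi,N]^G\big).$$
Since $[\pi,N]^G$ lies in $A^1(\g,\g)$ and the alternator $\alpha$ restricted to $A^1(\g,\g)$ is the identity (the symmetric group $S_1$ is trivial), this gives $[\pi,N]^C=[\pi,N]^G$; that is, in this bidegree the generalized bracket is nothing but the ordinary Gerstenhaber bracket.

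It then remains to compute $[\pi,N]^G=\pi\circ N-(-1)^{1\cdot 0}N\circ\pi=\pi\circ N-N\circ\pi$ directly from the formula defining the operation $\circ$ recalled in Subsection 2.3. For $\pi\circ N$ (with $p=1$, $q=0$) the sum over $i\in\{1,2\}$ yields $(\pi\circ N)(x,y)=\pi(N(x),y)+\pi(x,N(y))$, and for $N\circ\pi$ (with $p=0$, $q=1$) the single term $i=1$ yields $(N\circ\pi)(x,y)=N(\pi(x,y))$. Subtracting gives $[\pi,N]^G(x,y)=\pi(N(x),y)+\pi(x,N(y))-N(\pi(x,y))$, which is precisely \eqref{eq:LANij} once we rewrite $\pi(a,b)=a\cdot_\g b$.

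There is no genuine obstacle here; the content is a bookkeeping verification of degrees and signs. The only points deserving attention are that the factor $\tfrac{(p+q)!}{p!\,q!}$ equals $1$ and that $\alpha$ acts as the identity, both because one of the two arguments of the bracket has degree $0$; after that it is a one-line substitution into the definition of $\circ$. One could equally well invoke the explicit formula for $P\diamond Q$ displayed in Section 3, but that formula degenerates when $p=0$ or $q=0$, so the route through $\alpha([\pi,N]^G)$ is the cleanest.
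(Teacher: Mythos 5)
Your computation is correct and is exactly the routine verification the paper leaves implicit (the lemma is stated without proof there): in bidegree $(1,0)$ the normalizing factor is $1$ and $\alpha$ is the identity, so $[\pi,N]^C=[\pi,N]^G=\pi\circ N-N\circ\pi$, which unwinds to \eqref{eq:LANij}. Nothing is missing.
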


We use $T_\pi N:\g\otimes\g\longrightarrow\g$ to denote the
Nijenhuis torsion of $N$ defined by\footnote{This definition is
motivated by the Nijenhuis torsion of a Lie algebra given in
\cite[Proposition 2.3]{Kos}.}
\begin{equation}\label{eq:LANij1}
T_\pi N:=\half([\pi,N\diamond N]^C+[N,[\pi,N]^C]^C).
\end{equation}
\begin{lem}\label{lem:morp}
  With the above notations, for all $x,y\in\g$, we have
  \begin{equation}\label{eq:TN}
T_\pi N(x,y)=N(x\cdot_N y)-N(x) \cdot_\g N(y).
\end{equation}
\end{lem}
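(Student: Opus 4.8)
The plan is to evaluate the two summands in the definition \eqref{eq:LANij1} of $T_\pi N$ separately and add them. The key preliminary observation is that, since $N$ has degree $0$, both the alternator $\alpha$ and the composition operation behave trivially on the components involved: $\alpha$ is the identity on $C^0(\g,\g)$ and on $C^1(\g,\g)$, and $P\circ Q$ with one of $P,Q$ of degree $0$ is just post- or pre-composition by that map. In particular $N\diamond N = N^2$ as a linear endomorphism of $\g$, so $[\pi,N\diamond N]^C = [\pi,N^2]^C$; applying the preceding lemma, i.e.\ formula \eqref{eq:LANij}, with $N$ replaced by the linear map $N^2$, I get
\begin{equation*}
[\pi,N\diamond N]^C(x,y) = N^2(x)\cdot_\g y + x\cdot_\g N^2(y) - N^2(x\cdot_\g y).
\end{equation*}

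Next I would expand $[N,[\pi,N]^C]^C = [N,\pi_N]^C$. Since $\pi_N$ has degree $1$ and $N$ degree $0$, $[N,\pi_N]^C = N\diamond\pi_N - \pi_N\diamond N$, and unwinding the definition of $\diamond$ (using the triviality of $\alpha$ noted above) gives $N\diamond\pi_N(x,y) = N(x\cdot_N y)$ and $\pi_N\diamond N(x,y) = N(x)\cdot_N y + x\cdot_N N(y)$, hence
\begin{equation*}
[N,[\pi,N]^C]^C(x,y) = N(x\cdot_N y) - N(x)\cdot_N y - x\cdot_N N(y).
\end{equation*}

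It remains to add these two expressions, divide by $2$, and substitute the explicit formula \eqref{eq:LANij} for each of the three occurrences $x\cdot_N y$, $N(x)\cdot_N y$ and $x\cdot_N N(y)$. After this substitution the terms $N^2(x)\cdot_\g y$ and $x\cdot_\g N^2(y)$ produced by $[\pi,N\diamond N]^C$ cancel against the matching ones from $[N,\pi_N]^C$, and collecting the rest reorganizes the sum into
\begin{equation*}
T_\pi N(x,y) = N\big(N(x)\cdot_\g y + x\cdot_\g N(y) - N(x\cdot_\g y)\big) - N(x)\cdot_\g N(y) = N(x\cdot_N y) - N(x)\cdot_\g N(y),
\end{equation*}
which is \eqref{eq:TN}. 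There is no conceptual obstacle here; the only real work — and the step most prone to sign errors — is the final bookkeeping, where one should keep separate track of the six distinct monomials $N^2(x)\cdot_\g y$, $x\cdot_\g N^2(y)$, $N^2(x\cdot_\g y)$, $N(N(x)\cdot_\g y)$, $N(x\cdot_\g N(y))$, $N(x)\cdot_\g N(y)$ and verify the coefficients. Alternatively one can reach the same reorganization by using the graded Jacobi identity for $[\cdot,\cdot]^C$ together with $[N,N]^C=0$, but the direct expansion is the most transparent.
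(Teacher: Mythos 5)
Your proposal is correct and follows essentially the same route as the paper: both compute $[\pi,N\diamond N]^C(x,y)=N^2(x)\cdot_\g y+x\cdot_\g N^2(y)-N^2(x\cdot_\g y)$ and $[N,[\pi,N]^C]^C(x,y)=N(x\cdot_N y)-N(x)\cdot_N y-x\cdot_N N(y)$ separately, then expand via \eqref{eq:LANij} and observe the cancellation of $N^2(x)\cdot_\g y$ and $x\cdot_\g N^2(y)$. Your preliminary remarks on the triviality of $\alpha$ in degrees $0$ and $1$ and on $N\diamond N=N^2$ are correct and simply make explicit what the paper leaves implicit.
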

\pf For all $x,y\in \g$, we have
\begin{eqnarray*}
[\pi,N\diamond N]^C(x,y)=N^2(x)\cdot_\g y+x\cdot_\g N^2(y)-N^2(x\cdot_\g y).
\end{eqnarray*}
On the other hand, we have
\begin{eqnarray*}
[N,[\pi,N]^C]^C(x,y)&=&N(N(x)\cdot_\g y+x\cdot_\g N(y)-N(x\cdot_\g y))\\
&&-N^2(x)\cdot_\g y-N(x)\cdot_\g N(y)+N(N(x)\cdot_\g y)\\
&&-N(x)\cdot_\g N(y)-x\cdot_\g N^2(y)+N(x\cdot_\g N(y)).
\end{eqnarray*}
Thus, we have
\begin{eqnarray*}
([\pi,N\diamond N]^C+[N,[\pi,N]^C]^C)(x,y)=2\Big(N\big(N(x)\cdot_\g y+x\cdot_\g N(y)-N(x\cdot_\g y)\big)-N(x)\cdot_\g N(y)\Big),
\end{eqnarray*}
which implies that \eqref{eq:TN} holds. \qed

\begin{thm}\label{thm:LANij}
Let $(\g,\pi)$ be a pre-Lie algebra and $N$ a linear map. Then the deformed structure $\pi_N=[\pi,N]^C$ is a pre-Lie algebra if and only if the Nijenhuis torsion $T_\pi N$ of $N$ is closed, i.e. $\dr(T_\pi N)=0$.
\end{thm}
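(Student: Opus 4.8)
The plan is to translate everything into the graded Lie algebra $(C(\g,\g),[\cdot,\cdot]^C)$ and use the graded Jacobi identity together with the fact (Theorem~\ref{thm:lsymNR}) that a bilinear operation defines a pre-Lie algebra exactly when its $[\cdot,\cdot]^C$-square vanishes. Since $\pi_N=[\pi,N]^C$, the condition that $\pi_N$ be a pre-Lie algebra is $[\pi_N,\pi_N]^C=0$, so the whole statement reduces to showing
$$[[\pi,N]^C,[\pi,N]^C]^C=0\quad\Longleftrightarrow\quad \dr(T_\pi N)=0.$$
The first step is to expand $[[\pi,N]^C,[\pi,N]^C]^C$ using the graded Jacobi identity for $[\cdot,\cdot]^C$ and the identity $[\pi,\pi]^C=0$. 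Treating $N\in C^0(\g,\g)$ (degree $0$) and $\pi\in C^1(\g,\g)$ (degree $1$), repeated use of graded Jacobi should collapse $[[\pi,N]^C,[\pi,N]^C]^C$ to an expression in $[\pi,[N,[\pi,N]^C]^C]^C$ and $[\pi,[\pi,N\diamond N]^C]^C$; the cleanest route is to first establish the bookkeeping identity $[[\pi,N]^C,[\pi,N]^C]^C = [\pi,\,[N,[\pi,N]^C]^C + [\pi,N\diamond N]^C\,]^C$, i.e. $\tfrac12[\pi_N,\pi_N]^C = [\pi, T_\pi N]^C = \pm\,\dr(T_\pi N)$ up to the sign in \eqref{eq:cob}. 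Here I would lean on the already-proved Lemma~\ref{lem:morp}, which identifies $T_\pi N$ concretely via \eqref{eq:TN}, and on the definition \eqref{eq:LANij1} of $T_\pi N$ as $\tfrac12([\pi,N\diamond N]^C+[N,[\pi,N]^C]^C)$.

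Concretely, the key algebraic step is to show $[\pi,[\pi,N\diamond N]^C]^C = [[\pi,N]^C,[\pi,N]^C]^C - [\pi,[N,[\pi,N]^C]^C]^C$. Using graded Jacobi on $[\pi,[\pi,N\diamond N]^C]^C$ and $[\pi,\pi]^C=0$ one gets $[\pi,[\pi,N\diamond N]^C]^C = \pm[[\pi,[\pi,N]^C]^C, N]^C$-type terms; similarly $[N,[\pi,N]^C]^C$ paired with $\pi$ via Jacobi produces $[[\pi,N]^C,[\pi,N]^C]^C$ plus a term involving $[\pi,[N,[\pi,N]^C]^C]^C$ again. Adding \eqref{eq:LANij1}'s two pieces, the unwanted terms should cancel in pairs, leaving exactly $2[\pi,T_\pi N]^C = \pm 2\,\dr(T_\pi N)$ on one side and $[\pi_N,\pi_N]^C$ on the other. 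Then $\pi_N$ is pre-Lie $\iff [\pi_N,\pi_N]^C=0 \iff \dr(T_\pi N)=0$, as claimed. An alternative, more hands-on backup: expand both $[\pi_N,\pi_N]^C$ and $\dr(T_\pi N)$ directly on elements $x,y,z\in\g$ using \eqref{eq:LANij} and \eqref{eq:TN}, and check the two resulting quartic-in-$N$ expressions agree term by term (this is the brute-force verification that the paper's "motivated by \cite{Kos}" footnote suggests is routine).

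The main obstacle is the sign-and-coefficient bookkeeping in the graded Jacobi manipulations: because $N$ has even degree and $\pi$ odd degree in the $C(\g,\g)$ grading, several $(-1)^{pq}$ factors are $+1$ and it is easy to lose a factor of $2$ or mis-sign a term, so one must be careful that the definition \eqref{eq:LANij1} of $T_\pi N$ (with its $\tfrac12$) matches the coefficient coming out of $\tfrac12[\pi_N,\pi_N]^C$. A secondary subtlety is making sure $T_\pi N$, a priori an element of $C^1(\g,\g)=\Hom(\wedge^2\g\otimes\g,\g)$ once one checks via \eqref{eq:TN} that it is skew in its first two arguments (this skewness is what makes $\dr(T_\pi N)$ well-defined and is itself worth a one-line check from \eqref{eq:TN}). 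Once these coefficient issues are pinned down, the equivalence is immediate from Theorem~\ref{thm:lsymNR} and the fact that $\dr=\delta$ is injective on the relevant piece in the sense that $\dr(T_\pi N)=0$ is literally the vanishing of $[\pi,T_\pi N]^C$.
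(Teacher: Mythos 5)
Your proposal is correct and follows essentially the same route as the paper: the paper's entire proof is the single graded-Jacobi identity $\tfrac12[[\pi,N]^C,[\pi,N]^C]^C=[\pi,T_\pi N]^C$ (which holds because $[\pi,[\pi,\cdot]^C]^C=\tfrac12[[\pi,\pi]^C,\cdot]^C=0$ kills the $[\pi,N\diamond N]^C$ contribution), combined with $\dr(T_\pi N)=-[\pi,T_\pi N]^C$. One minor remark: your worry about skewness of $T_\pi N$ is vacuous, since $T_\pi N\in C^1(\g,\g)=\Hom(\wedge^1\g\otimes\g,\g)=\Hom(\g\otimes\g,\g)$ carries no symmetry constraint.
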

\pf By the graded Jacobi identity, we have
 \begin{equation}\label{eq:LANij2}
\half([[\pi,N]^C,[\pi,N]^C]^C)=[\pi,T_\pi N]^C,
\end{equation}
which implies that $(\g,\pi_N)$ is a pre-Lie algebra if and only if  $\dr(T_\pi N)=0$.\qed

\begin{rmk}
The deformed pre-Lie algebra structure $\pi_N$ is compatible with $\pi$ in the
sense that $\pi + \pi_N$ is a pre-Lie algebra structure, i.e. $[\pi + \pi_N,\pi + \pi_N]^C=0.$

\end{rmk}
\begin{defi}
Let $(\g,\pi)$ be a pre-Lie algebra. A linear map $N:\g\longrightarrow \g$ is called a {\bf Nijenhuis operator} if $T_\pi N=0$, i.e.
\begin{equation}\label{eq:nij-opera}
N(x)\cdot_\g N(y)=N\big(N(x)\cdot_\g y+x\cdot_\g N(y)-N(x\cdot_\g
y)\big),\;\forall x,y\in\g.
\end{equation}
\end{defi}

By Lemma \ref{lem:morp} and Theorem \ref{thm:LANij}, we have
\begin{pro}\label{pro:LANij}
Let $N:\g\longrightarrow \g$ be a Nijenhuis operator on a pre-Lie algebra $(\g,\pi)$. Then $(\g,\pi_N)$ is also a pre-Lie algebra and $N$ is a morphism from the pre-Lie algebra $(\g,\pi_N)$ to $(\g,\pi)$.
\end{pro}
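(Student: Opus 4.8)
The plan is to obtain both assertions as immediate consequences of Lemma \ref{lem:morp} and Theorem \ref{thm:LANij}, so that no new computation is needed beyond what has already been set up. First I would record that, by the definition of a Nijenhuis operator, the Nijenhuis torsion $T_\pi N\in C^1(\g,\g)=\Hom(\g\otimes\g,\g)$ vanishes identically. In particular $\dr(T_\pi N)=\dr(0)=0$, so the hypothesis of Theorem \ref{thm:LANij} is trivially met and $(\g,\pi_N)$ is a pre-Lie algebra. Equivalently, one can invoke \eqref{eq:LANij2}: since $T_\pi N=0$, we get $[\pi_N,\pi_N]^C=[[\pi,N]^C,[\pi,N]^C]^C=2[\pi,T_\pi N]^C=0$, and then Theorem \ref{thm:lsymNR}(ii) says $\pi_N$ is a pre-Lie algebra structure.

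For the morphism statement, the key identity is \eqref{eq:TN} from Lemma \ref{lem:morp}, namely $T_\pi N(x,y)=N(x\cdot_N y)-N(x)\cdot_\g N(y)$ for all $x,y\in\g$. Since $T_\pi N=0$, this becomes $N(x\cdot_N y)=N(x)\cdot_\g N(y)$, which is precisely the assertion that $N$ intertwines the multiplication $\cdot_N$ of $(\g,\pi_N)$ with the multiplication $\cdot_\g$ of $(\g,\pi)$; hence $N$ is a morphism of pre-Lie algebras from $(\g,\pi_N)$ to $(\g,\pi)$.

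There is no serious obstacle here: the substance of the argument is already carried by the earlier results, whose proofs rely on the graded Jacobi identity for $[\cdot,\cdot]^C$ (Theorem \ref{thm:lsymNR}) and on the explicit expansion of the relevant brackets computed in Lemma \ref{lem:morp}. If a self-contained verification were wanted instead, one could plug the formula \eqref{eq:LANij} for $\cdot_N$ into the pre-Lie associator and use the Nijenhuis condition \eqref{eq:nij-opera} repeatedly to cancel terms, the only mild bookkeeping being the symmetry in the first two arguments; but that route merely reproves Theorem \ref{thm:LANij} in the special case $T_\pi N=0$ and is less efficient than quoting it.
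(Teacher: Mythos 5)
Your proposal is correct and follows exactly the route the paper takes: the paper derives this proposition directly from Lemma \ref{lem:morp} (whose identity \eqref{eq:TN} with $T_\pi N=0$ gives the morphism property) and Theorem \ref{thm:LANij} (whose hypothesis $\dr(T_\pi N)=0$ holds trivially), offering no further argument. Your additional remarks on \eqref{eq:LANij2} and the possibility of a direct verification are accurate but not needed.
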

Recall that a {\bf Nijenhuis operator} $N$ on a Lie algebra $(\g,[\cdot,\cdot]_\g)$ is a linear operator $N:\g\longrightarrow \g$ satisfying
$$N([N(x),y]_\g+[x,N(y)]_\g-N([x,y]_\g))=[N(x),N(y)]_\g,\quad \forall~x,y\in\g.$$
Then $(\g,[\cdot,\cdot]_N)$ is a Lie algebra and $N$ is a morphism
from the Lie algebra $(\g,[\cdot,\cdot]_N)$ to
$(\g,[\cdot,\cdot]_\g)$, where the bracket  $[\cdot,\cdot]_N$ is
defined by
$$[x,y]_N=[N(x),y]_\g+[x,N(y)]_\g-N([x,y]_\g),\;\;\forall x,y\in \g.$$

The following   conclusion  is obvious.
\begin{pro}
Let $N$ be a Nijenhuis operator   on a pre-Lie algebra $(\g,\cdot_\g)$. Then $N$ is also a Nijenhuis operator on the sub-adjacent Lie algebra $(\g,[\cdot,\cdot]^c)$. Furthermore, the commutator bracket of the multiplication $\cdot_N$ is exactly the Lie bracket $[\cdot,\cdot]^c_N$, i.e.
$$
[x,y]^c_N=x\cdot_N y-y\cdot_Nx,\;\;\forall x,y\in \g.
$$
\end{pro}

In the following, we study deformations of pre-Lie algebras using the graded Lie algebra $(C(\g,\g),{[\cdot,\cdot]^C})$ given in Theorem \ref{thm:lsymNR}. Let $(\g,\pi)$ be a pre-Lie algebra, and
$\omega\in \Hom(\otimes^2\g,\g)$. Consider a $t$-parameterized family
of multiplications $\pi_t:\g\otimes \g\longrightarrow \g$ given by
\begin{eqnarray*}
\pi_t(x, y)&=&\pi(x,y)+t\omega(x,y),\;\;\forall x,y\in \g.
\end{eqnarray*}
If  $\g_t=(\g,\pi_t)$ is a pre-Lie algebra for all $t$, we say
that $\omega$ generates a {\bf $1$-parameter infinitesimal
deformation} of $(\g,\pi)$.

Obviously,  $(\g,\pi_t)$ is a deformation of $(\g,\pi)$  if and only if $[\pi_t,\pi_t]^C=0$,
which is equivalent to
 \begin{eqnarray}
[\pi,\omega]^C&=&0,\label{2-closed}\\
{[\omega,\omega]}^C&=&0.\label{omega bracket}
\end{eqnarray}
Eq. $(\ref{2-closed})$ means that $\omega$ is a $2$-cocycle for the pre-Lie algebra $(\g,\pi)$, i.e. $\dr\omega=0$, and Eq. $(\ref{omega bracket})$ means that $(\g,\omega)$ is a pre-Lie algebra.

Two deformations $\g_t=(\g,\pi_t)$ and $\g'_t=(\g,\pi'_t)$  of a
pre-Lie algebra $(\g,\cdot_\g)$, which are generated by $\omega$
and $\omega'$  respectively, are said to be {\bf
equivalent} if there exists a family of pre-Lie algebra
isomorphisms ${\Id}+tN:\g_t\longrightarrow \g'_t$. A deformation
is said to be {\bf trivial} if there exist a family of pre-Lie
algebra isomorphisms ${\Id}+tN:\g_t\longrightarrow (\g,\pi)$.

By straightforward computations, $\g_t$ and $\g'_t$  are
equivalent deformations  if and only if
\begin{eqnarray}
\omega(x,y)-\omega'(x,y)&=&x\cdot_\g N(y)+N(x)\cdot_\g y-N(x\cdot_\g y),\label{2-exact}\\
N\omega(x,y)&=&\omega'(x,N(y))+\omega'(N(x),y)+N(x)\cdot_\g N(y),\label{integral condition 1}\\
\omega'(N(x),N(y))&=&0,\label{eq:con1}
\end{eqnarray}
in which $x\cdot_\g y=\pi(x,y),$ for all $~x,y\in \g.$

Eq. $(\ref{2-exact})$ means that $\omega-\omega'=\dr
N$.  Eq. \eqref{eq:con1} means that $\omega'|_{\Img(N)}=0.$

We summarize the above discussion into the following
 conclusion:
\begin{thm}\label{thm:deformation}
Let $(\g,\pi_t)$ be a deformation of a pre-Lie algebra $(\g,\pi)$
generated by $\omega\in\Hom(\otimes^2\g,\g)$. Then $\omega$ is
closed, i.e. $\dr\omega=0.$ Furthermore, if two deformations
$(\g,\pi_t)$ and $(\g,\pi'_t)$ generated by $\omega $ and
$\omega'$  respectively  are equivalent,  then
$\omega$ and $\omega'$ are in the same cohomology class in $H_{\rm
reg}^2(\g,\g)$.
\end{thm}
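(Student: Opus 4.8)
The plan is to read both assertions off the coefficient-by-coefficient analysis of the Maurer--Cartan equation $[\pi_t,\pi_t]^C=0$, which is exactly the computation already performed in the paragraphs immediately preceding the theorem; the proof amounts to packaging equations \eqref{2-closed} and \eqref{2-exact} together.

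For the first assertion, I would invoke Theorem \ref{thm:lsymNR}(ii): $(\g,\pi_t)$ is a pre-Lie algebra for every $t$ if and only if $[\pi_t,\pi_t]^C=0$ for every $t$. Substituting $\pi_t=\pi+t\omega$ and using that $[\cdot,\cdot]^C$ is bilinear and graded-symmetric yields $[\pi_t,\pi_t]^C=[\pi,\pi]^C+2t[\pi,\omega]^C+t^2[\omega,\omega]^C$; since $[\pi,\pi]^C=0$ this polynomial in $t$ vanishes identically, so its $t^1$-coefficient vanishes, which is \eqref{2-closed}, i.e. $[\pi,\omega]^C=0$. By the definition \eqref{eq:cob} of the coboundary operator, together with the identification $\delta=\dr$ for the regular representation, $\dr\omega=(-1)^{|\omega|}[\pi,\omega]^C=0$, so $\omega$ is a $2$-cocycle. (The $t^2$-coefficient gives \eqref{omega bracket}, hence $(\g,\omega)$ is itself a pre-Lie algebra, but this is not needed here.)

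For the second assertion, suppose $\Id+tN\colon\g_t\to\g'_t$ is a family of pre-Lie algebra isomorphisms. Expanding the morphism identity $(\Id+tN)\bigl(\pi_t(x,y)\bigr)=\pi'_t\bigl((\Id+tN)x,(\Id+tN)y\bigr)$ and matching powers of $t$ produces, besides the trivial $t^0$-identity, exactly the three conditions \eqref{2-exact}, \eqref{integral condition 1} and \eqref{eq:con1}; of these only \eqref{2-exact} is needed. It reads $\omega(x,y)-\omega'(x,y)=N(x)\cdot_\g y+x\cdot_\g N(y)-N(x\cdot_\g y)$, and by the coboundary formula \eqref{eq:pre-Lie cohomology2} applied to $N$ (equivalently by \eqref{eq:cob}, since $\dr N=[\pi,N]^C$) the right-hand side is precisely $\dr N(x,y)$. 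Hence $\omega-\omega'=\dr N$. Combined with the first part, $\omega$ and $\omega'$ are both $2$-cocycles differing by a coboundary, so they define the same class in $H_{\rm reg}^2(\g,\g)$.

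I do not expect a genuine obstacle here: this is a summary statement, and everything of substance is already in the displayed equations of this section. The only care needed is bookkeeping — verifying that being a pre-Lie algebra for all $t$ legitimately forces each $t$-coefficient to vanish (immediate, since the hypothesis is imposed for every $t$ and $\K$ is infinite), and keeping the normalizations straight so that the term on the right-hand side of \eqref{2-exact} is literally $\dr N$ and not a nonzero scalar multiple of it.
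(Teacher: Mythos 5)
Your proposal is correct and follows exactly the route the paper takes: the paper states this theorem as a summary of the immediately preceding computation, namely that $[\pi_t,\pi_t]^C=0$ forces the $t$-coefficient $[\pi,\omega]^C=0$ (i.e.\ $\dr\omega=0$), and that equivalence of deformations yields \eqref{2-exact}, i.e.\ $\omega-\omega'=\dr N$. Your sign and normalization checks (that $\delta=\dr$ on the regular representation and that the right-hand side of \eqref{2-exact} is literally $[\pi,N]^C=\dr N$) are the only bookkeeping required, and they go through as you describe.
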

Now we consider  trivial deformations of a pre-Lie algebra
$(\g,\cdot_\g)$. Eqs.
$(\ref{2-exact})-(\ref{eq:con1})$ reduce to
\begin{eqnarray}
\omega(x,y)&=&x\cdot_\g N(y)+N(x)\cdot_\g y-N(x\cdot_\g y),\label{Nij1}\\
N(\omega(x,y))&=&N(x)\cdot_\g N(y)\label{Nij2}.
\end{eqnarray}
 It follows from $(\ref{Nij1})$ and $(\ref{Nij2})$ that $N$ is a Nijenhuis operator.

We have seen that a trivial deformation of a pre-Lie algebra gives rise to a Nijenhuis operator. In fact, the converse is also true.

\begin{thm}\label{thm:trivial def}
  Let $N$ be a Nijenhuis operator on a pre-Lie algebra $(\g,\cdot_\g)$. Then a
  deformation of  $(\g,\cdot_\g)$ can be
  obtained by putting
  $$
\omega(x,y)= (\dr N)(x,y),\;\;\forall x,y\in \g.
  $$
  Furthermore, this deformation is trivial.
\end{thm}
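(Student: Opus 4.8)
The plan is to prove the statement in two parts: first, that $\omega:=\dr N$ generates a $1$-parameter deformation $\pi_t=\pi+t\omega$; and second, that $\Id+tN$ intertwines $\pi_t$ with $\pi$, which is precisely what it means for this deformation to be trivial.

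For the first part I would observe that $N\in C^0(\g,\g)$, so $\dr N=(-1)^0[\pi,N]^C=[\pi,N]^C=\pi_N$ and, by \eqref{eq:LANij}, $\omega(x,y)=N(x)\cdot_\g y+x\cdot_\g N(y)-N(x\cdot_\g y)$. Then I would expand
$$[\pi_t,\pi_t]^C=[\pi,\pi]^C+2t\,[\pi,\omega]^C+t^2\,[\omega,\omega]^C$$
and show all three terms vanish: $[\pi,\pi]^C=0$ since $(\g,\pi)$ is pre-Lie; $[\pi,\omega]^C=0$ since $\omega=\dr N$ is a coboundary, hence $\dr\omega=0$ (concretely, the graded Jacobi identity yields $2[\pi,[\pi,N]^C]^C=[[\pi,\pi]^C,N]^C=0$); and $[\omega,\omega]^C=0$ since $(\g,\omega)=(\g,\pi_N)$ is a pre-Lie algebra by Proposition~\ref{pro:LANij} (equivalently, by Eq.~\eqref{eq:LANij2}, $\half[[\pi,N]^C,[\pi,N]^C]^C=[\pi,T_\pi N]^C=0$ because $T_\pi N=0$). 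This gives $[\pi_t,\pi_t]^C=0$ for all $t$, so each $(\g,\pi_t)$ is a pre-Lie algebra and $\omega=\dr N$ generates a $1$-parameter deformation of $(\g,\pi)$.

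For triviality I would set $x\cdot_t y:=\pi_t(x,y)=x\cdot_\g y+t\,\omega(x,y)$ and compare coefficients of $t$ in
$$(\Id+tN)(x\cdot_t y)=x\cdot_\g y+t\big(\omega(x,y)+N(x\cdot_\g y)\big)+t^2\,N(\omega(x,y))$$
and
$$(\Id+tN)(x)\cdot_\g(\Id+tN)(y)=x\cdot_\g y+t\big(N(x)\cdot_\g y+x\cdot_\g N(y)\big)+t^2\,N(x)\cdot_\g N(y).$$
The $t^0$ terms agree trivially; the $t^1$ terms agree by the formula \eqref{eq:LANij} for $\omega=\pi_N$, which is exactly Eq.~\eqref{Nij1}; and the $t^2$ terms agree because $N(\omega(x,y))=N(x\cdot_N y)=N(x)\cdot_\g N(y)$ by the Nijenhuis condition \eqref{eq:nij-opera}, which is exactly Eq.~\eqref{Nij2}. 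Hence $\Id+tN:(\g,\pi_t)\longrightarrow(\g,\pi)$ is a pre-Lie algebra homomorphism for every $t$. (Equivalently, one may simply invoke Eqs.~\eqref{2-exact}--\eqref{eq:con1} with $\omega'=0$, which reduce precisely to \eqref{Nij1}--\eqref{Nij2}, both of which hold for a Nijenhuis $N$ by construction.)

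The content here is entirely a matter of routine identities, so the only delicate point --- the ``hard part'', such as it is --- is making sense of invertibility of the family $\Id+tN$. I would dispose of this by noting that, $\K$ being algebraically closed, $\Id+tN$ fails to be invertible only for the finitely many $t$ with $-t^{-1}$ an eigenvalue of $N$ (and that it is invertible in $\End(\g)[[t]]$ in any case); for all remaining $t$ it is therefore a pre-Lie algebra isomorphism onto $(\g,\pi)$, so $(\g,\pi_t)$ is a trivial deformation.
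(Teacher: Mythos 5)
Your proof is correct and follows essentially the same route as the paper: $\omega=\dr N$ is automatically closed, $[\omega,\omega]^C=0$ follows from the Nijenhuis condition, and $\Id+tN$ intertwines $\pi_t$ with $\pi$. The only differences are that you supply the details the paper omits (deducing $[\omega,\omega]^C=0$ from $\half[[\pi,N]^C,[\pi,N]^C]^C=[\pi,T_\pi N]^C$) and that you explicitly address the invertibility of $\Id+tN$, which the paper passes over; both additions are welcome and correct.
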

\pf Since $\omega$ is exact, $\omega $ is closed naturally, i.e. Eq.
\eqref{2-closed} holds. To see that $\omega$ generates a
deformation, we only need to show that \eqref{omega bracket} holds,
which follows from the Nijenhuis condition \eqref{eq:nij-opera}. We omit the details. At last, it is obvious that
$$
({\Id} +tN)\pi_t(x, y)=\pi(({\Id}+tN) (x),({\Id}+tN)(y)),
$$
which implies that the deformation is trivial. \qed
\vspace{3mm}

As in the Lie algebra case, any polynomial of a Nijenhuis operator is still a Nijenhuis operator.
The following formula can be obtained by straightforward computations.
\begin{lem}\label{lem:Nij1}
Let $N$ be a Nijenhuis operator   on a pre-Lie algebra
$(\g,\cdot_\g)$. Then for arbitrary positive $j,k\in\Nat$,  the following equation holds
\begin{equation}
N^j(x)\cdot_\g N^k(y)-N^k(N^j(x)\cdot_\g y)-N^j(x\cdot_\g N^k(y))+N^{j+k}(x\cdot_\g y)=0,\quad\forall~ x,y\in \g.
\end{equation}
If $N$ is invertible, this formula is valid for arbitrary $j,k\in\mathbb Z$.
\end{lem}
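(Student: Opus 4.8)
The plan is to prove the displayed identity---call it $Q(j,k)$---by a double induction built entirely on the defining relation of a Nijenhuis operator. The starting point is the observation that $Q(1,1)$ is exactly the Nijenhuis condition \eqref{eq:nij-opera}, and that $Q(j,0)$ and $Q(0,k)$ hold trivially (in the first the two outer terms cancel and the two middle terms cancel, and similarly in the second). First I would establish $Q(1,k)$ for all $k\geq 1$ by induction on $k$: assuming $Q(1,k)$, apply \eqref{eq:nij-opera} with $y$ replaced by $N^{k}(y)$ to rewrite $N(x)\cdot_\g N^{k+1}(y)$ in terms of $N(N(x)\cdot_\g N^{k}(y))$ and images of $N$ applied to $x\cdot_\g y$ and $x\cdot_\g N^{k+1}(y)$, then substitute for $N\big(N(x)\cdot_\g N^{k}(y)\big)$ by applying $N$ to the inductive hypothesis $Q(1,k)$; after cancelling the two occurrences of $N^{k+2}(x\cdot_\g y)$ what remains is precisely $Q(1,k+1)$.

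Next, with $Q(1,k)$ available for every $k\geq 1$, I would fix $k$ and prove $Q(j,k)$ for all $j\geq 1$ by induction on $j$, the base case being $Q(1,k)$. For the step from $Q(j,k)$ to $Q(j+1,k)$, write $N^{j+1}(x)=N(N^{j}(x))$ and apply $Q(1,k)$ with $x$ replaced by $N^{j}(x)$; this expresses $N^{j+1}(x)\cdot_\g N^{k}(y)$ in terms of $N\big(N^{j}(x)\cdot_\g N^{k}(y)\big)$, $N^{k}(N^{j+1}(x)\cdot_\g y)$ and $N^{k+1}(N^{j}(x)\cdot_\g y)$. Substituting for $N\big(N^{j}(x)\cdot_\g N^{k}(y)\big)$ via $N$ applied to $Q(j,k)$ and cancelling the two copies of $N^{k+1}(N^{j}(x)\cdot_\g y)$ yields exactly $Q(j+1,k)$. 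This finishes the case of positive $j,k$.

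For the invertible case I would first record that $N^{-1}$ is again a Nijenhuis operator: substituting $x\mapsto N^{-1}(x)$, $y\mapsto N^{-1}(y)$ into \eqref{eq:nij-opera} and applying $N^{-2}$ produces precisely the Nijenhuis identity for $N^{-1}$. Applying the already proven formula to $N^{-1}$ then gives $Q(j,k)$ for all $j,k\leq 0$. The mixed-sign cases reduce to cases already handled: for $j,k\geq 1$, substituting $y\mapsto N^{k}(y)$ in the desired $Q(j,-k)$ and applying $N^{k}$ turns it into $Q(j,k)$, while substituting $x\mapsto N^{j}(x)$ and applying $N^{j}$ reduces $Q(-j,k)$ to $Q(j,k)$; combining these two reductions disposes of every remaining sign pattern. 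The only genuine obstacle here is bookkeeping---keeping track of which monomials cancel in the two inductive steps and handling the exponent arithmetic when passing to $N^{-1}$ and to mixed signs---so, as the statement asserts, the whole argument is a straightforward (if slightly lengthy) computation driven by the single relation \eqref{eq:nij-opera}.
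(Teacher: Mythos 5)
Your proposal is correct and supplies exactly the ``straightforward computation'' that the paper omits (the paper states the lemma with no proof beyond that remark): the double induction on $j$ and $k$ driven solely by \eqref{eq:nij-opera}, followed by the observation that $N^{-1}$ is again Nijenhuis and the substitution tricks for the mixed-sign cases, is the natural and complete argument, and every step checks out. One small slip in your bookkeeping for the step $Q(1,k)\Rightarrow Q(1,k+1)$: after substituting $N$ applied to $Q(1,k)$ into the Nijenhuis identity at $(x,N^k(y))$, the terms that cancel in pairs are $N\bigl(N(x)\cdot_\g N^k(y)\bigr)$ and $N^2\bigl(x\cdot_\g N^k(y)\bigr)$, whereas $N^{k+2}(x\cdot_\g y)$ occurs only once and survives as the last term of $Q(1,k+1)$; the conclusion is unaffected.
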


\begin{thm}
Let $N$ be a Nijenhuis operator
on a pre-Lie algebra $(\g,\cdot_\g)$. Then for any polynomial
$P(z)=\sum_{i=0}^nc_iz^i$ the operator $P(N)$ is also a Nijenhuis
operator. If $N$ is invertible,  then $Q(N)$ is also a Nijenhuis
operator for any polynomial $Q(z)=\sum_{i=-m}^nc_iz^i$.
\end{thm}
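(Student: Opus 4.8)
The plan is to reduce the statement to Lemma \ref{lem:Nij1} by a direct bilinear expansion. Write $M:=P(N)=\sum_{i=0}^{n}c_iN^i$; since $M$ is a polynomial in $N$, it commutes with every power of $N$ and with itself, and in particular $M\circ M=\sum_{i,j}c_ic_jN^{i+j}$. First I would expand the left-hand side of the Nijenhuis condition \eqref{eq:nij-opera} for $M$ using bilinearity of $\cdot_\g$:
\[
M(x)\cdot_\g M(y)=\sum_{i=0}^{n}\sum_{j=0}^{n}c_ic_j\,\bigl(N^i(x)\cdot_\g N^j(y)\bigr).
\]
Then I would rewrite each summand using Lemma \ref{lem:Nij1}, namely $N^i(x)\cdot_\g N^j(y)=N^j\bigl(N^i(x)\cdot_\g y\bigr)+N^i\bigl(x\cdot_\g N^j(y)\bigr)-N^{i+j}(x\cdot_\g y)$ (valid for all $i,j\ge 0$, the cases $i=0$ or $j=0$ being tautologies).

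Substituting this into the double sum and resumming over $i$ and $j$ separately --- the only place where one must be slightly careful with index bookkeeping --- gives
\[
M(x)\cdot_\g M(y)=M\bigl(M(x)\cdot_\g y\bigr)+M\bigl(x\cdot_\g M(y)\bigr)-(M\circ M)(x\cdot_\g y),
\]
because $\sum_{i,j}c_ic_jN^j\bigl(N^i(x)\cdot_\g y\bigr)=M\bigl(M(x)\cdot_\g y\bigr)$, similarly for the middle term, and $\sum_{i,j}c_ic_jN^{i+j}(x\cdot_\g y)=(M\circ M)(x\cdot_\g y)=M\bigl(M(x\cdot_\g y)\bigr)$. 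Since $\cdot_\g$ and $M$ are linear, the right-hand side equals $M\bigl(M(x)\cdot_\g y+x\cdot_\g M(y)-M(x\cdot_\g y)\bigr)$, which is exactly condition \eqref{eq:nij-opera} for $M=P(N)$; hence $P(N)$ is a Nijenhuis operator. For the Laurent-polynomial case, when $N$ is invertible Lemma \ref{lem:Nij1} holds for all $i,j\in\mathbb{Z}$, so the identical computation applies verbatim to $Q(N)=\sum_{i=-m}^{n}c_iN^i$: the indices now range over a finite subset of $\mathbb{Z}$, $Q(N)$ still commutes with every integer power of $N$, and $(Q(N))\circ(Q(N))=\sum_{i,j}c_ic_jN^{i+j}$, so the three resummations go through unchanged.

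I expect no serious obstacle: the argument is essentially bookkeeping, and the only thing needing care is the rearrangement of the double sum together with the identification $\sum_{i,j}c_ic_jN^{i+j}=M\circ M$, which rests on commutativity of powers of $N$. This is the precise analogue of the corresponding fact for Nijenhuis operators on Lie algebras; note also that the constant term $c_0\idd$ creates no difficulty, since the $i=0$ (or $j=0$) instance of Lemma \ref{lem:Nij1} is trivially true, so constant terms are handled uniformly with the rest.
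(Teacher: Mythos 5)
Your proposal is correct and follows essentially the same route as the paper: expand $P(N)(x)\cdot_\g P(N)(y)$ bilinearly, apply Lemma \ref{lem:Nij1} termwise, and resum, with the invertible/Laurent case handled by the integer-index version of the lemma. Your explicit remark that the $i=0$ or $j=0$ instances are tautologies is a small but welcome clarification, since the lemma as stated is for positive exponents.
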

\pf For $x,y\in \g$, by Lemma \ref{lem:Nij1}, we have
\begin{eqnarray*}
&&P(N)(x)\cdot_\g P(N)(y)-P(N)(P(N)(x)\cdot_\g y)-P(N)(x\cdot_\g P(N)(y))+P^2(N)(x\cdot_\g y)\\
&=&\sum_{i,j=0}^nc_jc_k\Big(N^j(x)\cdot_\g N^k(y)-N^k(N^j(x)\cdot_\g y)-N^j(x\cdot_\g N^k(y))+N^{j+k}(x\cdot_\g y)\Big)=0,
\end{eqnarray*}
which implies that $P(N)$ is a Nijenhuis operator. The second statement is valid for similar reasons.\qed

\section{$\huaO$-operators, Rota-Baxter operators and Nijenhuis
operators}

In the first subsection, we illustrate the relationships between
$\huaO$-operators on pre-Lie algebras  introduced in \cite{BaiO},
Rota-Baxter operators on pre-Lie algebras  and Nijenhuis operators
on pre-Lie algebras. In the second subsection, we give the
relationships between Nijenhuis operators and certain compatible
structures like compatible $\huaO$-operators and compatible
L-dendriform algebras  in terms of $\huaO$-operators on
pre-Lie algebras.

\subsection{Relationships between $\huaO$-operators, Rota-Baxter operators and Nijenhuis
operators}

Recall that an {\bf $\huaO$-operator} on a pre-Lie algebra
$(\g,\cdot_\g)$ associated to a representation $(V;\rho,\mu)$ is a
linear map $T:V\longrightarrow \g$ satisfying
\begin{equation}
  T(u)\cdot_\g T(v)=T\Big(\rho(T(u))(v)+\mu(T(v))(u)\Big),\quad \forall u,v\in V.
\end{equation}
By Proposition \ref{pro:morphism}, if $r$ is an $\frks$-matrix,
then $r^\sharp$ is an $\huaO$-operator on the pre-Lie algebra
$(\g,\cdot_\g)$ associated to   the  representation
$(\g^*;\ad^*,-R^*)$.

\begin{defi}
Let $(\g,\cdot_\g)$ be a pre-Lie algebra and $\huaR:\g\longrightarrow
\g$ a   linear operator. If $\huaR$ satisfies \begin{equation}
\huaR(x)\cdot_\g \huaR(y)=\huaR(\huaR(x)\cdot_\g y+x\cdot_\g \huaR(y))+\lambda
\huaR(x\cdot_\g y),\quad \forall x,y\in \g,\end{equation} then $\huaR$ is called a
{\bf Rota-Baxter operator of weight $\lambda$} on $\g$.
\end{defi}
  Note that a Rota-Baxter operator of weight zero on a
pre-Lie algebra $\g$ is exactly an $\huaO$-operator
associated to the regular representation $(\g;L,R)$.

\begin{pro}\label{pro:RN}
Let $(\g,\cdot_\g)$ be a pre-Lie algebra and $N:\g\longrightarrow
\g$ a linear operator.
\begin{enumerate} \item[\rm(i)] If $N^2=0$, then
$N$ is a Nijenhuis operator if and only if $N$ is a Rota-Baxter
operator of weight zero on $\g$.
\item[\rm(ii)] If $N^2=N$, then $N$ is a
Nijenhuis operator if and only if $N$ is a Rota-Baxter operator of
weight $-1$ on $\g$.
 \item[\rm(iii)] If $N^2={\rm Id}$, then $N$ is a
Nijenhuis operator if and only if $N\pm{\rm Id}$ is a Rota-Baxter
operator of weight $\mp 2$ on $\g$.
\end{enumerate}

\end{pro}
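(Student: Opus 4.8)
The plan is to reduce all three statements to a direct substitution into the Nijenhuis condition \eqref{eq:nij-opera}, exploiting the given algebraic relation on $N^2$ to simplify the one term $N\big(N(x\cdot_\g y)\big)$ that appears there.

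For (i) and (ii) nothing is needed beyond this observation. If $N^2=0$, then $N\big(N(x\cdot_\g y)\big)=0$, so \eqref{eq:nij-opera} becomes $N(x)\cdot_\g N(y)=N\big(N(x)\cdot_\g y+x\cdot_\g N(y)\big)$, which is verbatim the Rota-Baxter identity of weight $0$; so the two conditions are literally the same equation. If $N^2=N$, then $N\big(N(x\cdot_\g y)\big)=N(x\cdot_\g y)$, and \eqref{eq:nij-opera} becomes $N(x)\cdot_\g N(y)=N\big(N(x)\cdot_\g y+x\cdot_\g N(y)\big)-N(x\cdot_\g y)$, which is verbatim the Rota-Baxter identity of weight $-1$. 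In each case the equivalence is immediate.

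For (iii), when $N^2=\mathrm{Id}$ the Nijenhuis condition \eqref{eq:nij-opera} reads $N(x)\cdot_\g N(y)=N\big(N(x)\cdot_\g y+x\cdot_\g N(y)\big)-x\cdot_\g y$. Put $R_\pm=N\pm\mathrm{Id}$. I would expand $R_\pm(x)\cdot_\g R_\pm(y)$ by bilinearity of $\cdot_\g$, expand $R_\pm\big(R_\pm(x)\cdot_\g y+x\cdot_\g R_\pm(y)\big)$, compute $\mp 2\,R_\pm(x\cdot_\g y)$, and in the process replace every occurrence of $N^2$ by $\mathrm{Id}$. After the (routine) cancellations, the Rota-Baxter identity of weight $\mp 2$ for $R_\pm$ collapses in both cases to exactly $N(x)\cdot_\g N(y)=N\big(N(x)\cdot_\g y+x\cdot_\g N(y)\big)-x\cdot_\g y$, i.e. the displayed Nijenhuis condition. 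Hence $N$ is a Nijenhuis operator if and only if $N+\mathrm{Id}$ is a Rota-Baxter operator of weight $-2$, if and only if $N-\mathrm{Id}$ is a Rota-Baxter operator of weight $+2$.

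There is no real obstacle here; the only mildly delicate point is the bookkeeping in part (iii), where one must track the terms $x\cdot_\g y$, $N(x)\cdot_\g y$, $x\cdot_\g N(y)$, $N(x\cdot_\g y)$, $N(N(x)\cdot_\g y)$, $N(x\cdot_\g N(y))$ and $N(x)\cdot_\g N(y)$ separately. It is worth remarking that the choice of weight is forced: matching the coefficient of $N(x\cdot_\g y)$ (equivalently, the power of $N^2$ that the hypothesis lets us eliminate) is precisely what dictates the values $0$, $-1$ and $\mp 2$.
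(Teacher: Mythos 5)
Your proof is correct, and for parts (i) and (ii) it coincides with the paper's (the paper simply declares these ``obvious due to the definitions,'' which is exactly the substitution you spell out). For part (iii) you take a genuinely different route: the paper does not compute anything but instead invokes Corollary 2.10 of \cite{BGN}, which relates Nijenhuis operators squaring to the identity with Rota--Baxter operators of weight $\mp 2$; you instead verify the equivalence by direct expansion. Your computation checks out: writing $R_\pm=N\pm\Id$ and $\epsilon=\pm1$, the left side of the weight-$\mp2\epsilon$ Rota--Baxter identity expands to $N(x)\cdot_\g N(y)+\epsilon\bigl(N(x)\cdot_\g y+x\cdot_\g N(y)\bigr)+x\cdot_\g y$, while the right side expands to $N\bigl(N(x)\cdot_\g y+x\cdot_\g N(y)\bigr)+\epsilon\bigl(N(x)\cdot_\g y+x\cdot_\g N(y)\bigr)$ after the terms $\pm2N(x\cdot_\g y)$ and $\pm2x\cdot_\g y$ cancel against the weight term; the common $\epsilon$-terms drop and one is left with precisely the Nijenhuis condition in the form $N(x)\cdot_\g N(y)+x\cdot_\g y=N\bigl(N(x)\cdot_\g y+x\cdot_\g N(y)\bigr)$ valid when $N^2=\Id$. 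What your approach buys is self-containedness (no reliance on the external corollary) at the cost of a short bookkeeping computation; your closing remark that the weight is forced by matching the coefficient of $N(x\cdot_\g y)$ is a nice observation that the paper does not make.
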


\pf (i) and (ii) are obvious due to the definitions. (iii) follows
from a conclusion in \cite{BGN} (Corollary 2.10). \qed

\begin{ex} {\rm In \cite{LHB}, Rota-Baxter operators $\huaR$ of weight
zero on pre-Lie algebras were studied with some examples. In
particular, there are the following examples satisfying $\huaR^2=0$:
\begin{enumerate}
\item Let $(\g,\cdot_\g)$ be an $n$-dimensional  pre-Lie algebra
with a basis $\{e_1,\cdots, e_n\}$ of $\g$ such that $L_{e_1}={\rm
Id}, L_{e_i}=0,i=2,\cdots,n$. \item Let $(\g,\cdot_\g)$ be an
$n$-dimensional pre-Lie algebra with a basis $\{e_1,\cdots, e_n\}$
of $\g$ such that $R_{e_1}={\rm Id}, R_{e_i}=0,i=2,\cdots,n$.
\item Let $(\g,\cdot_\g)$ be a  $2$-dimensional pre-Lie algebra
with a basis $\{e_1,e_2\}$ whose non-zero products are given by
$$e_2\cdot_\g e_1=-e_1, \quad e_2\cdot_\g e_2=-e_2.$$
 \item Let $(\g,\cdot_\g)$ be a  $2$-dimensional pre-Lie algebra with a basis
$\{e_1,e_2\}$ whose non-zero products are given by
$$e_1\cdot_\g e_2=e_1,\quad  e_2\cdot_\g e_2=e_2.$$
 \item Let $(\g,\cdot_\g)$ be a  $3$-dimensional pre-Lie algebra with a basis
$\{e_1,e_2,e_3\}$ whose non-zero products are given by
$$e_3\cdot_\g e_1=e_1,\quad  e_3\cdot_\g e_2=e_2,\quad e_3\cdot_\g e_3=e_3.$$
\item Let $(\g,\cdot_\g)$ be a  $3$-dimensional pre-Lie algebra
with a basis $\{e_1,e_2,e_3\}$ whose non-zero products are given
by
$$e_1\cdot_\g e_3=-e_1, \quad e_2\cdot_\g e_3=-e_2,\quad e_3\cdot_\g e_3=-e_3.$$
\end{enumerate}
For every above pre-Lie algebra $\g$, $\huaR$ is a Rota-Baxter operator
of weight zero on $\g$ if and only if $\huaR^2=0$. In these cases, such
$\huaR$ is also a Nijenhuis operator on $\g$.
}
\end{ex}

\begin{ex} {\rm
In \cite{AB}, the classification of Rota-Baxter operators $\huaR$
of weight $-1$ on complex associative algebras in dimensions 2 and
3 were given. It is straightforward to find that there are many
examples satisfying $\huaR^2=\huaR$ and hence they are also
Nijenhuis operators on these associative algebras (hence pre-Lie
algebras). }
\end{ex}

\begin{lem}\label{lem:OR} {\rm(\cite{BGN})} Let $\g$ be a pre-Lie algebra and
$(V;\rho,\mu)$ a representation. Let $T:V\rightarrow \g$ be a
linear map. For any $\lambda$, $T$ is an $\huaO$-operator on
$\g$ associated to $(V;\rho,\mu)$ if and only if the linear map
$\huaR_T:=\left(\begin{array}{cc}0&T\\0&-\lambda {\rm
Id}\end{array}\right)$ is a Rota-Baxter operator of weight
$\lambda$ on the semidirect product pre-Lie algebra
$(\g\ltimes_{\rho,\mu}V,\star)$, where the multiplication $\star$
is given by \eqref{eq:mulsemi}.
\end{lem}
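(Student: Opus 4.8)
The plan is to prove Lemma~\ref{lem:OR} by a direct computation that exploits the bigrading of the semidirect product $\g\ltimes_{\rho,\mu}V=\g\oplus V$. Recall first that $(\g\ltimes_{\rho,\mu}V,\star)$ really is a pre-Lie algebra, with $\star$ given by \eqref{eq:mulsemi}, precisely because $(V;\rho,\mu)$ is a representation of $(\g,\cdot_\g)$, so it makes sense to speak of the Rota-Baxter identity on it. Writing a generic element as $x+u$ with $x\in\g$ and $u\in V$, the operator $\huaR_T$ acts by $\huaR_T(x+u)=T(u)-\lambda u$, where $T(u)\in\g$ and $-\lambda u\in V$; in particular the $\g$-component of $\huaR_T(x+u)$ is $T(u)$ and its $V$-component is $-\lambda u$, and neither involves $x$.

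First I would substitute $X=x_1+u_1$ and $Y=x_2+u_2$ into the weight-$\lambda$ Rota-Baxter identity
$$\huaR_T(X)\star\huaR_T(Y)=\huaR_T\big(\huaR_T(X)\star Y+X\star\huaR_T(Y)\big)+\lambda\,\huaR_T(X\star Y)$$
and expand every occurrence of $\star$ via \eqref{eq:mulsemi}, separating $\g$- and $V$-components throughout. The left-hand side has $\g$-component $T(u_1)\cdot_\g T(u_2)$ and $V$-component $-\lambda\big(\rho(T(u_1))u_2+\mu(T(u_2))u_1\big)$. On the right-hand side, since every outermost application of $\huaR_T$ sends $w+v$ (with $w\in\g$, $v\in V$) to $T(v)-\lambda v$, the $V$-component equals $-\lambda$ times the $V$-component of $\huaR_T(X)\star Y+X\star\huaR_T(Y)+\lambda(X\star Y)$; a short calculation shows the $\lambda^2$-terms involving $x_1,x_2$ cancel in pairs, and what is left is exactly $-\lambda\big(\rho(T(u_1))u_2+\mu(T(u_2))u_1\big)$. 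Thus the $V$-components of the two sides agree \emph{identically}, imposing no condition on $T$.

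Next I would compare the $\g$-components. The $\g$-component of the right-hand side is $T$ applied to the $V$-component of $\huaR_T(X)\star Y+X\star\huaR_T(Y)$, plus $\lambda T$ applied to the $V$-component of $X\star Y$; by linearity of $T$ the terms $-\lambda T\big(\rho(x_1)u_2+\mu(x_2)u_1\big)$ coming from the first piece cancel the terms $+\lambda T\big(\rho(x_1)u_2+\mu(x_2)u_1\big)$ coming from the second, leaving exactly $T\big(\rho(T(u_1))u_2+\mu(T(u_2))u_1\big)$. Hence the $\g$-component of the Rota-Baxter identity is equivalent to
$$T(u_1)\cdot_\g T(u_2)=T\big(\rho(T(u_1))u_2+\mu(T(u_2))u_1\big),\qquad\forall\,u_1,u_2\in V,$$
which is precisely the defining equation of an $\huaO$-operator on $\g$ associated to $(V;\rho,\mu)$. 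Combining the two components gives the claimed equivalence.

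The argument is entirely mechanical, so there is no real conceptual obstacle; the one thing to be careful about is the bookkeeping of the various $\lambda$-dependent cross-terms in $x_1$ and $x_2$, which must be seen to cancel in pairs — and this is exactly where the specific form of $\huaR_T$ (zero $(1,1)$- and $(2,1)$-entries, with $-\lambda\,{\rm Id}$ in the $(2,2)$-slot) is used. As a sanity check I would also note that setting $\lambda=0$ recovers the familiar statement that an $\huaO$-operator associated to $(V;\rho,\mu)$ is the same thing as a weight-zero Rota-Baxter operator on $\g\ltimes_{\rho,\mu}V$ that is supported on $V$.
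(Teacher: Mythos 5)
Your computation is correct: the bigraded expansion of the weight-$\lambda$ Rota--Baxter identity on $\g\ltimes_{\rho,\mu}V$ does reduce, after the cancellation of the $\lambda$-cross-terms in $x_1,x_2$ that you identify, exactly to the $\huaO$-operator equation for $T$, and the $V$-components match identically. The paper itself offers no proof of this lemma (it is quoted from \cite{BGN}), so there is nothing to compare against; your direct verification is the standard argument one would expect.
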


\begin{pro}
Let $(\g,\cdot_\g)$ be a pre-Lie algebra and $(V;\rho,\mu)$ a
representation. Let $T:V\rightarrow \g$ be a linear map. Then the
following statements are equivalent.
\begin{enumerate}
\item[\rm(i)] $T$ is an $\huaO$-operator on the pre-Lie algebra $(\g,\cdot_\g)$;
\item[\rm(ii)]
  $N_T :=\left(\begin{array}{cc}0&T\\0&0\end{array}\right)$ is a Nijenhuis operator on
the pre-Lie algerba $(\g\ltimes_{\rho,\mu}V,\star)$;
\item[\rm(iii)]
  $\huaN_T :=\left(\begin{array}{cc}0&T\\0&{\rm Id}\end{array}\right)$ is a Nijenhuis operator on
the pre-Lie algerba $(\g\ltimes_{\rho,\mu}V,\star)$.
\end{enumerate}

\end{pro}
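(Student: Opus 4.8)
The plan is to deduce both equivalences from two already-established facts: the correspondence between $\huaO$-operators and Rota-Baxter operators on a semidirect product (Lemma \ref{lem:OR}) and the correspondence between Rota-Baxter and Nijenhuis operators under idempotency or nilpotency hypotheses (Proposition \ref{pro:RN}). The only additional inputs needed are the block computations $N_T^2=0$ and $\huaN_T^2=\huaN_T$, both immediate from the matrix forms (the lower-right corner is $0$ in the first case and $\Id$ in the second, while the off-diagonal block squares to $0$). Thus $N_T$ falls under the hypothesis of Proposition \ref{pro:RN}(i) and $\huaN_T$ under that of Proposition \ref{pro:RN}(ii).

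For (i) $\Leftrightarrow$ (ii): apply Lemma \ref{lem:OR} with $\lambda=0$. It asserts that $T$ is an $\huaO$-operator on $\g$ associated to $(V;\rho,\mu)$ if and only if $\huaR_T=\left(\begin{array}{cc}0&T\\0&0\end{array}\right)$ is a Rota-Baxter operator of weight $0$ on $(\g\ltimes_{\rho,\mu}V,\star)$. For $\lambda=0$ this operator $\huaR_T$ is exactly $N_T$; since $N_T^2=0$, Proposition \ref{pro:RN}(i) says $N_T$ is a weight-$0$ Rota-Baxter operator on the semidirect product if and only if it is a Nijenhuis operator on it. Composing the two equivalences gives (i) $\Leftrightarrow$ (ii).

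For (i) $\Leftrightarrow$ (iii): apply Lemma \ref{lem:OR} with $\lambda=-1$. Here $-\lambda\Id=\Id$, so the operator it produces is $\huaR_T=\left(\begin{array}{cc}0&T\\0&\Id\end{array}\right)=\huaN_T$, and $T$ is an $\huaO$-operator if and only if $\huaN_T$ is a Rota-Baxter operator of weight $-1$ on $(\g\ltimes_{\rho,\mu}V,\star)$. Since $\huaN_T^2=\huaN_T$, Proposition \ref{pro:RN}(ii) turns this last condition into the assertion that $\huaN_T$ is a Nijenhuis operator. Composing yields (i) $\Leftrightarrow$ (iii).

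There is essentially no hard step here; the whole argument is an exercise in matching the weights so that the operator $\huaR_T$ furnished by Lemma \ref{lem:OR} literally coincides with $N_T$ (take $\lambda=0$) and with $\huaN_T$ (take $\lambda=-1$). If one preferred a self-contained argument, one could instead expand the Nijenhuis identity \eqref{eq:nij-opera} for $N_T$, respectively $\huaN_T$, evaluated on elements $x_1+v_1$ and $x_2+v_2$ of $\g\ltimes_{\rho,\mu}V$ using the multiplication \eqref{eq:mulsemi}: because $N_T$ annihilates $\g$ and takes values in $\g$ with $N_T^2=0$, the Nijenhuis identity collapses term by term to precisely the defining identity of an $\huaO$-operator, and similarly for $\huaN_T$. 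The only thing to be careful about in that direct computation is keeping track of the $\g$- and $V$-components of $\star$; it involves no genuine obstacle.
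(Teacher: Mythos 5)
Your proposal is correct and follows exactly the paper's own proof: the paper likewise observes that $N_T$ corresponds to $\lambda=0$ with $N_T^2=0$ and $\huaN_T$ to $\lambda=-1$ with $\huaN_T^2=\huaN_T$, and then combines Lemma \ref{lem:OR} with Proposition \ref{pro:RN}. The weight-matching and the block computations you supply are precisely the content of that argument.
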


\pf Note that $N_T$ corresponds to $\lambda=0$ and
$(N_T)^2=0$, $\huaN_T$ corresponds to $\lambda=-1$ and
$(\huaN_T )^2=\huaN_T$. Hence the conclusions follow from Proposition \ref{pro:RN} and Lemma \ref{lem:OR}. \qed

\subsection{Compatible $\huaO$-operators and compatible L-dendriform algebras}

\begin{defi}
Let $(\g,\cdot_\g)$ be a pre-Lie algebra and $(V;\rho,\mu)$  a
representation. Let $T_1,T_2: V\longrightarrow \g$ be two
$\huaO$-operators associated to $(V;\rho,\mu)$. If for all
$k_1,k_2$, $k_1T_1+k_2T_2$ is still an $\huaO$-operator
associated to $(V;\rho,\mu)$, then $T_1$ and $T_2$ are called {\bf
compatible}.
\end{defi}

\begin{ex} \label{ex:crs} Let $(\g,\cdot_\g)$ be a pre-Lie algebra. Then two
$\frks$-matrices $r_1$ and $r_2$ are compatible in the sense of
Definition~\ref{defi:cs} if and only $r_1^\sharp$ and $r_2^\sharp$ are
compatible as  $\huaO$-operators on $(\g,\cdot_\g)$
associated to the representation $(\g^*;{\rm ad}^*,-R^*)$.

\end{ex}

\begin{pro}
Let $T_1,T_2: V\longrightarrow \g$ be two $\huaO$-operators
on a pre-Lie algebra $(\g,\cdot_\g)$ associated to a
representation $(V;\rho,\mu)$. Then $T_1$ and $T_2$ are compatible
if and only if the following equation holds:
\begin{eqnarray}
\nonumber T_1(u)\cdot_\g T_2(v)+ T_2(u)\cdot_\g
 T_1(v)&=&T_1\Big(\rho(T_2(u))(v)+\mu(T_2(v))(u)\Big)\\
 &&+T_2\Big(\rho(T_1(u))(v)+\mu(T_1(v))(u)\Big),\quad \forall u,v\in V.\label{eq:CN1}
\end{eqnarray}
\end{pro}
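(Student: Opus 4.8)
The plan is a direct polarization-type computation, entirely parallel to the remark following Definition~\ref{defi:cs} that characterizes compatible $\frks$-matrices. The key observation is that the defining equation of an $\huaO$-operator is homogeneous quadratic in the operator, so the obstruction to $k_1T_1+k_2T_2$ being an $\huaO$-operator is governed by a single cross term.

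First I would set, for any linear map $T:V\longrightarrow\g$,
$$
F(T)(u,v)=T(u)\cdot_\g T(v)-T\big(\rho(T(u))(v)+\mu(T(v))(u)\big),\qquad u,v\in V,
$$
so that $T$ is an $\huaO$-operator associated to $(V;\rho,\mu)$ exactly when $F(T)=0$. Using linearity of $T$, $\rho$ and $\mu$ together with bilinearity of $\cdot_\g$, I would expand $F(k_1T_1+k_2T_2)$ and sort the result by the monomials $k_1^2$, $k_1k_2$, $k_2^2$; the outcome is
$$
F(k_1T_1+k_2T_2)=k_1^2\,F(T_1)+k_1k_2\,G(T_1,T_2)+k_2^2\,F(T_2),
$$
where $G(T_1,T_2)(u,v)$ is precisely the difference between the left-hand and right-hand sides of \eqref{eq:CN1}.

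Since $T_1$ and $T_2$ are $\huaO$-operators we have $F(T_1)=F(T_2)=0$, so the identity collapses to $F(k_1T_1+k_2T_2)=k_1k_2\,G(T_1,T_2)$. For the ``if'' direction: assuming \eqref{eq:CN1} gives $G(T_1,T_2)=0$, hence $F(k_1T_1+k_2T_2)=0$ for all $k_1,k_2$, i.e. $k_1T_1+k_2T_2$ is always an $\huaO$-operator and $T_1,T_2$ are compatible. For the ``only if'' direction: compatibility applied to $k_1=k_2=1$ yields $F(T_1+T_2)=0$, that is $G(T_1,T_2)=0$, which is exactly \eqref{eq:CN1}.

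I do not expect a genuine obstacle; the only place requiring care is the bookkeeping when expanding the term $T\big(\rho(T(u))(v)+\mu(T(v))(u)\big)$ with $T=k_1T_1+k_2T_2$, where one must check that the coefficient of $k_1k_2$ receives exactly four contributions — one from each choice of which of $T_1,T_2$ plays the role of the outer operator and which plays the role of the operator inside $\rho$ and $\mu$ — and that no further terms appear. An alternative would be to phrase everything through the graded Lie bracket, but the direct computation above is the cleanest route.
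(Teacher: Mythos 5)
Your proof is correct and is exactly the ``direct computation'' that the paper invokes without writing out: the defining identity of an $\huaO$-operator is homogeneous quadratic in $T$, so polarizing $F(k_1T_1+k_2T_2)$ isolates the cross term $k_1k_2\,G(T_1,T_2)$, which vanishes iff \eqref{eq:CN1} holds. No gaps; the bookkeeping you flag works out as you describe.
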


\pf It follows from a direct computation.\qed\vspace{3mm}

Using an $\huaO$-operator and a Nijenhuis operator, we can construct a pair of compatible $\huaO$-operators.

\begin{pro}\label{pro:NT}
Let $T: V\longrightarrow \g$ be an $\huaO$-operator on a
pre-Lie algebra $(\g,\cdot_\g)$ associated to a representation
$(V;\rho,\mu)$ and $N$  a Nijenhuis operator on $(\g,\cdot_\g)$.
Then $N\circ T$ is an $\huaO$-operator on the pre-Lie algebra $(\g,\cdot_\g)$ associated to the representation $(V;\rho,\mu)$
if and only if for all $u,v\in V$, the following equation holds:
\begin{eqnarray}
\nonumber &&N\Big(NT(u)\cdot_\g T(v)+T(u)\cdot_\g NT(v)\Big)\\
&=&N\Big(T\big(\rho(NT(u))(v)+\mu(NT(v))(u)\big)+NT\big(\rho(T(u))(v)+\mu(T(v))(u)\big)\Big).\label{eq:ON}
\end{eqnarray}
In this case, if in addition $N$ is invertible, then $T$ and $N\circ T$
are compatible. More explicitly, for any $\huaO$-operator $T$,
if there exists an invertible Nijenhuis operator $N$ such that $N\circ T$
is also an $\huaO$-operator, then $T$ and $N\circ T$ are compatible.
\end{pro}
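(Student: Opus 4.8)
The plan is to prove the stated equivalence by a direct expansion of the product $NT(u)\cdot_\g NT(v)$, and then to deduce the compatibility assertion from the invertibility of $N$ together with the preceding characterization of compatible $\huaO$-operators via \eqref{eq:CN1}.

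First I would apply the Nijenhuis condition \eqref{eq:nij-opera} with $x=T(u)$ and $y=T(v)$ to obtain
\[
NT(u)\cdot_\g NT(v)=N\Big(NT(u)\cdot_\g T(v)+T(u)\cdot_\g NT(v)-N\big(T(u)\cdot_\g T(v)\big)\Big),
\]
and then substitute the defining identity of the $\huaO$-operator $T$, namely $T(u)\cdot_\g T(v)=T\big(\rho(T(u))(v)+\mu(T(v))(u)\big)$, into the last summand. On the other hand, $N\circ T$ is an $\huaO$-operator exactly when $NT(u)\cdot_\g NT(v)=NT\big(\rho(NT(u))(v)+\mu(NT(v))(u)\big)$. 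Equating the two expressions for $NT(u)\cdot_\g NT(v)$ and transposing the common term $N^2T\big(\rho(T(u))(v)+\mu(T(v))(u)\big)$, one finds that $N\circ T$ is an $\huaO$-operator if and only if \eqref{eq:ON} holds. The whole argument is a short manipulation; the only thing that needs care is the bookkeeping of which summands carry $N$, $N^2$, or no $N$ at all, and this is precisely the point where the genuine Nijenhuis identity (rather than a mere morphism property) is used.

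For the last two sentences of the statement, suppose in addition that $N$ is invertible and that $N\circ T$ is an $\huaO$-operator. Then $T$ and $N\circ T$ are both $\huaO$-operators associated to $(V;\rho,\mu)$, so by the previous proposition they are compatible if and only if \eqref{eq:CN1} holds with $T_1=T$ and $T_2=N\circ T$; but that instance of \eqref{eq:CN1} is obtained from \eqref{eq:ON} simply by cancelling the outermost $N$, which is legitimate precisely because $N$ is invertible. Since \eqref{eq:ON} has already been shown to hold (being equivalent to ``$N\circ T$ is an $\huaO$-operator''), the compatibility follows, and the final ``more explicitly'' sentence is just a restatement of this. I do not expect a real obstacle here: the computation in the first part is routine once the Nijenhuis identity is written in the form above, and the only subtlety worth flagging is that without invertibility of $N$ the identity \eqref{eq:ON} is strictly weaker than \eqref{eq:CN1}, so that hypothesis is genuinely needed for the compatibility conclusion.
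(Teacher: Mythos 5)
Your proposal is correct and follows essentially the same route as the paper: expand $NT(u)\cdot_\g NT(v)$ via the Nijenhuis identity, substitute the $\huaO$-operator identity for $T(u)\cdot_\g T(v)$, and read off the equivalence with \eqref{eq:ON}; then, for invertible $N$, cancel the outer $N$ in \eqref{eq:ON} to recover exactly the compatibility condition \eqref{eq:CN1} for the pair $(T, N\circ T)$. Your closing remark that \eqref{eq:ON} is strictly weaker than \eqref{eq:CN1} without invertibility of $N$ is a correct and worthwhile observation that the paper leaves implicit.
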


\pf   Let $u,v\in V$. Since $N$ is a Nijenhuis operator,
we have
$$NT(u)\cdot_\g NT(v)=N\Big(NT(u)\cdot_\g T(v)+T(u)\cdot_\g
NT(v)\Big)-N^2(T(u)\cdot_\g T(v)).$$ Note that
$$T(u)\cdot_\g T(v)=T\Big(\rho(T(u))(v)+\mu(T(v))(u)\Big).$$
Then
$$NT(u)\cdot_\g NT(v)=NT\Big(\rho(NT(u))(v)+\mu(NT(v))(u)\Big)$$
if and only if  (\ref{eq:ON}) holds.

If $N\circ T$ is an $\huaO$-operator and $N$ is invertible, then we have
$$NT(u)\cdot_\g T(v)+T(u)\cdot_\g NT(v)=
T\big(\rho(NT(u))(v)+\mu(NT(v))(u)\big)+NT\big(\rho(T(u))(v)+\mu(T(v))(u)\big),$$
which is exactly the condition that $N\circ T$ and $T$ are compatible. \qed\vspace{3mm}

A pair of compatible $\huaO$-operators can also give rise to a Nijenhuis operator under some conditions.

\begin{pro}\label{pro:TTN}
Let $T_1,T_2: V\longrightarrow \g$ be two $\huaO$-operators on
a pre-Lie algebra $(\g,\cdot_\g)$ associated to a representation
$(V;\rho,\mu)$. Suppose that $T_2$ is invertible. If $T_1$ and $T_2$
are compatible, then $N=T_1\circ T_2^{-1}$ is a Nijenhuis operator on the pre-Lie algebra $(\g,\cdot_\g)$.
\end{pro}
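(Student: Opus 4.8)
The plan is to use the invertibility of $T_2$ to parametrize $\g$ by $V$, and thereby reduce the Nijenhuis identity \eqref{eq:nij-opera} for $N=T_1\circ T_2^{-1}$ to the two $\huaO$-operator identities for $T_1$ and $T_2$ together with the compatibility identity \eqref{eq:CN1}. Concretely, since $T_2$ is invertible, every pair $x,y\in\g$ can be written uniquely as $x=T_2(u)$, $y=T_2(v)$ with $u,v\in V$; then $N(x)=T_1(u)$, $N(y)=T_1(v)$, and more generally $N(T_2(w))=T_1(w)$ for all $w\in V$. Checking \eqref{eq:nij-opera} for all $x,y$ is therefore equivalent to checking it after this substitution.

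First I would compute the left-hand side of \eqref{eq:nij-opera}: since $T_1$ is an $\huaO$-operator,
$$N(x)\cdot_\g N(y)=T_1(u)\cdot_\g T_1(v)=T_1\big(\rho(T_1(u))(v)+\mu(T_1(v))(u)\big).$$
Next I would simplify the argument of $N$ on the right-hand side. Since $T_2$ is an $\huaO$-operator, $x\cdot_\g y=T_2(u)\cdot_\g T_2(v)=T_2\big(\rho(T_2(u))(v)+\mu(T_2(v))(u)\big)$, hence $N(x\cdot_\g y)=T_1\big(\rho(T_2(u))(v)+\mu(T_2(v))(u)\big)$. On the other hand $N(x)\cdot_\g y+x\cdot_\g N(y)=T_1(u)\cdot_\g T_2(v)+T_2(u)\cdot_\g T_1(v)$, and by the compatibility identity \eqref{eq:CN1} this equals $T_1\big(\rho(T_2(u))(v)+\mu(T_2(v))(u)\big)+T_2\big(\rho(T_1(u))(v)+\mu(T_1(v))(u)\big)$. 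Subtracting, the two $T_1$-terms cancel, leaving
$$N(x)\cdot_\g y+x\cdot_\g N(y)-N(x\cdot_\g y)=T_2\big(\rho(T_1(u))(v)+\mu(T_1(v))(u)\big).$$

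Finally, applying $N$ (which sends $T_2(w)\mapsto T_1(w)$) to this expression yields $T_1\big(\rho(T_1(u))(v)+\mu(T_1(v))(u)\big)$, which is exactly the left-hand side computed above. Hence $T_\pi N=0$, i.e.\ $N$ is a Nijenhuis operator. There is no genuine obstacle beyond bookkeeping; the only points requiring a word of care are that $N$ is well defined and satisfies $N(T_2(w))=T_1(w)$ (both immediate from invertibility of $T_2$), and that the substitution $x=T_2(u)$, $y=T_2(v)$ exhausts $\g\times\g$, so that verifying the identity in the transported form is equivalent to \eqref{eq:nij-opera}.
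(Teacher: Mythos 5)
Your proposal is correct and follows essentially the same route as the paper's own proof: parametrize $x=T_2(u)$, $y=T_2(v)$, use the $\huaO$-operator identity for $T_1=N\circ T_2$ on the left-hand side, and combine the compatibility identity \eqref{eq:CN1} with the $\huaO$-operator identity for $T_2$ before applying $N$. The only cosmetic difference is that you cancel the $T_1$-terms before applying $N$, whereas the paper applies $N$ first and lets the $N^2$ term cancel.
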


\pf For all $x,y\in \g$, there exist $u,v\in V$ such that
$T_2(u)=x, T_2(v)=y$. Hence $N=T_1\circ T_2^{-1}$ is a Nijenhuis operator
if and only if the following equation holds:
$$NT_2(u)\cdot_\g NT_2(v)=N(NT_2(u)\cdot_\g T_2(v)+T_2(u)\cdot_\g
NT_2(v))-N^2(T_2(u)\cdot_\g T_2(v)).$$ Since $T_1=N\circ T_2$ is an
$\huaO$-operator, the left hand side of the above equation is
$$NT_2(\rho(NT_2(u))(v)+\mu(NT_2(v))(u)).$$
Since $T_2$ is an $\huaO$-operator which is compatible with
$T_1=N\circ T_2$, we have
\begin{eqnarray*}
 &&NT_2(u)\cdot_\g T_2(v)+T_2(u)\cdot_\g NT_2(v)\\
 &=&
T_2(\rho(NT_2(u))(v)+\mu(NT_2(v))(u))+NT_2(\rho(T_2(u))(v)+\mu(T_2(v))(u))\\
&=&T_2(\rho(NT_2(u))(v)+\mu(NT_2(v))(u))+N(T_2(u)\cdot_\g T_2(v)).
\end{eqnarray*}
Let $N$ act on both sides, we get the conclusion. \qed\vspace{3mm}

By Proposition \ref{pro:NT} and \ref{pro:TTN}, we have
\begin{cor}
Let $T_1,T_2: V\longrightarrow \g$ be two $\huaO$-operators on
a pre-Lie algebra $(\g,\cdot_\g)$ associated to a representation
$(V;\rho,\mu)$. Suppose that  $T_1$ and $T_2$ are invertible. Then
$T_1$ and $T_2$ are compatible if and only if $N=T_1\circ T_2^{-1}$ is a
Nijenhuis operator.
\end{cor}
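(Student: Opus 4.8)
The plan is to deduce the corollary directly from Propositions \ref{pro:NT} and \ref{pro:TTN}, which between them already contain both implications; the only extra observation needed is that $N=T_1\circ T_2^{-1}$ is automatically invertible whenever $T_1$ and $T_2$ are, being a composition of two isomorphisms. So no genuinely new computation is required, and the real content of the proof is to feed the hypotheses into the two propositions in the right way. (One could instead argue from scratch by expanding the Nijenhuis condition \eqref{eq:nij-opera} for $N=T_1\circ T_2^{-1}$ against the compatibility condition \eqref{eq:CN1}, using that $T_2$ is invertible to substitute $x=T_2(u),y=T_2(v)$, but this just reproves the two propositions and is less clean.)

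First I would treat the direction ``compatible $\Rightarrow$ Nijenhuis''. Assume $T_1$ and $T_2$ are compatible $\huaO$-operators and recall $T_2$ is invertible; this is precisely the hypothesis of Proposition \ref{pro:TTN}, whose conclusion is that $N=T_1\circ T_2^{-1}$ is a Nijenhuis operator on $(\g,\cdot_\g)$. Note that invertibility of $T_1$ is not even used here.

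For the converse, assume $N=T_1\circ T_2^{-1}$ is a Nijenhuis operator. Since $T_1$ and $T_2$ are invertible, $N$ is invertible as well. Rewriting, $T_1=N\circ T_2$. Now I would invoke the last assertion of Proposition \ref{pro:NT} with the $\huaO$-operator there taken to be $T=T_2$ and the Nijenhuis operator taken to be $N$: by hypothesis $N\circ T_2=T_1$ is an $\huaO$-operator associated to $(V;\rho,\mu)$, and $N$ is invertible, so Proposition \ref{pro:NT} yields that $T_2$ and $N\circ T_2=T_1$ are compatible. This is exactly what we wanted.

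The step I expect to demand the most care — rather than being a genuine obstacle — is matching the roles of $T$, $N$ and $N\circ T$ in Proposition \ref{pro:NT} with the objects at hand (here $T\leftrightarrow T_2$, $N\circ T\leftrightarrow T_1$) and verifying the invertibility hypotheses are in force; all of the substantive work is carried by Propositions \ref{pro:NT} and \ref{pro:TTN}, and hence ultimately by the Nijenhuis identity \eqref{eq:nij-opera} and the $\huaO$-operator identities.
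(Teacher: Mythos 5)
Your proposal is correct and is essentially the paper's own argument: the paper derives this corollary directly from Propositions \ref{pro:NT} and \ref{pro:TTN}, exactly as you do, with Proposition \ref{pro:TTN} giving ``compatible $\Rightarrow$ Nijenhuis'' and the final assertion of Proposition \ref{pro:NT} (applied to $T=T_2$ and the invertible Nijenhuis operator $N=T_1\circ T_2^{-1}$, so that $N\circ T_2=T_1$) giving the converse. Your bookkeeping of which invertibility hypotheses are used where is accurate and matches the intended reading.
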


Since a Rota-Baxter operator of weight zero is an $\mathcal
O$-operator on a pre-Lie algebra associated to the regular
representation, we have the following conclusion.

\begin{cor} Let $(\g,\cdot_\g)$ be a pre-Lie algebra. Suppose that
$\huaR_1$ and $\huaR_2$ are two invertible Rota-Baxter operators of weight
zero. Then $\huaR_1$ and $\huaR_2$ are compatible in the sense that any
linear combination of $\huaR_1$ and $\huaR_2$ is still a Rota-Baxter
operator of weight zero if and only if $N=\huaR_1\circ \huaR_2^{-1}$ is a
Nijenhuis operator.
\end{cor}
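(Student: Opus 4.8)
The statement to prove is the final corollary: for a pre-Lie algebra $(\g,\cdot_\g)$ and two invertible Rota-Baxter operators $\huaR_1,\huaR_2$ of weight zero, compatibility (every linear combination is again a weight-zero Rota-Baxter operator) is equivalent to $N = \huaR_1\circ\huaR_2^{-1}$ being a Nijenhuis operator. The plan is to deduce this purely from the results already established in Section~5, since a Rota-Baxter operator of weight zero on $\g$ is exactly an $\huaO$-operator associated to the regular representation $(\g;L,R)$, a remark recorded right after the definition of Rota-Baxter operators.

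First I would translate everything into the language of $\huaO$-operators. Take $V=\g$ and $(\rho,\mu)=(L,R)$. Then $\huaR_1,\huaR_2$ are invertible $\huaO$-operators associated to $(\g;L,R)$, and the notion of compatibility of Rota-Baxter operators in the corollary is precisely the notion of compatibility of $\huaO$-operators from the definition preceding Example~\ref{ex:crs} (a linear combination $k_1\huaR_1+k_2\huaR_2$ being a weight-zero Rota-Baxter operator is the same as it being an $\huaO$-operator associated to $(\g;L,R)$). Having made this identification, the corollary follows immediately from the Corollary just above it (the one asserting that for invertible $\huaO$-operators $T_1,T_2$ associated to $(V;\rho,\mu)$, compatibility is equivalent to $N=T_1\circ T_2^{-1}$ being a Nijenhuis operator), applied with $T_1=\huaR_1$, $T_2=\huaR_2$. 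One only needs to note that $T_2^{-1}=\huaR_2^{-1}$ makes sense here because $\huaR_2$ is invertible as a linear operator $\g\to\g$, so the hypothesis $T_2(u)=x$, $T_2(v)=y$ ranging over all of $\g$ is satisfied.

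The few points to check carefully are purely bookkeeping: (1) that ``Rota-Baxter operator of weight zero'' and ``$\huaO$-operator for the regular representation'' genuinely coincide as conditions, which is the observation recorded after the definition; (2) that the two notions of ``compatible'' match under this dictionary, i.e.\ that $k_1\huaR_1+k_2\huaR_2$ is a weight-zero Rota-Baxter operator for all $k_1,k_2$ iff it is an $\huaO$-operator for $(\g;L,R)$ for all $k_1,k_2$ --- again immediate from (1); and (3) that invertibility of $\huaR_1,\huaR_2$ as Rota-Baxter operators means invertibility as linear maps, which is how invertibility is used in Proposition~\ref{pro:NT} and Proposition~\ref{pro:TTN}. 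I do not expect any genuine obstacle: this corollary is a direct specialization, and the only ``work'' is spelling out the regular-representation instance of the preceding corollary. So the proof is essentially one line: ``Apply the previous corollary with $V=\g$, $(\rho,\mu)=(L,R)$, recalling that weight-zero Rota-Baxter operators are exactly $\huaO$-operators for the regular representation.'' \qed
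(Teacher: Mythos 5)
Your proposal is correct and matches the paper exactly: the paper derives this corollary by the same one-line specialization, noting just before the statement that a weight-zero Rota-Baxter operator is precisely an $\huaO$-operator associated to the regular representation $(\g;L,R)$ and then invoking the preceding corollary on invertible compatible $\huaO$-operators. Your bookkeeping checks (identification of the two compatibility notions and of invertibility as a linear map) are the right things to verify and present no difficulty.
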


\begin{rmk}
Note that the inverse of an invertible Rota-Baxter operator is a
derivation on a pre-Lie algebra. However, it is interesting to see
that the similar role of Nijenhuis operators  as above is not
available for   derivations since any linear combination of two
derivations is always a derivation due to the fact that the set of
all derivations is a linear space.
\end{rmk}

  There is a notion of L-dendriform algebra which was
introduced in \cite{BaiO} as the algebraic structure behind an
$\huaO$-operators on a pre-Lie algebra. At the end of this
section, we introduce the notion of compatible L-dendriform
algebras as the naturally induced algebraic structures from the
compatible $\huaO$-operators on a pre-Lie algebra and hence in
particular, there are compatible L-dendriform algebras obtained
from a pair of an $\huaO$-operator and  a
Nijenhuis operator satisfying some conditions.

\begin{defi}{ Let $A$ be a vector space with two
binary operations denoted by $\triangleright$ and $ \triangleleft:
A\otimes A \rightarrow A$.  $(A, \triangleright, \triangleleft)$ is
called an {\bf L-dendriform algebra} if for all $x,y,z\in A$,
\begin{eqnarray}
x\triangleright(y\triangleright z)&=&(x\triangleright y)\triangleright
z + (x\triangleleft y)\triangleright z +
y\triangleright(x\triangleright z) - (y\triangleleft
x)\triangleright z - (y\triangleright x)\triangleright z,\\
x\triangleright(y\triangleleft z)&=&(x\triangleright y)\triangleleft z
+ y\triangleleft (x\triangleright z) + y\triangleleft(x\triangleleft
z) - (y\triangleleft x)\triangleleft z.
\end{eqnarray}
Two L-dendriform algebras $(A, \triangleright_1, \triangleleft_1)$
and $(A, \triangleright_2, \triangleleft_2)$ are called {\bf
compatible} if for all $k_1,k_2$, $(A,
k_1\triangleright_1+k_2\triangleright_2,
k_1\triangleleft_1+k_2\triangleleft_2)$ is an L-dendriform algebra.
We denote it by $(A, \triangleright_1, \triangleleft_1;
\triangleright_2, \triangleleft_2)$. }\end{defi}

For an L-dendriform algebra $(A, \triangleright, \triangleleft)$,
the binary operation $\cdot_A:A\otimes A\rightarrow A$ given by
\begin{equation}
x\cdot_A y=x\triangleright y - y \triangleleft x,\quad \forall x, y \in A,
\end{equation}
defines a (vertical)  pre-Lie algebra.

\begin{lem}
Two L-dendriform algebras $(A, \triangleright_1, \triangleleft_1)$
and $(A, \triangleright_2, \triangleleft_2)$ are compatible if and
only if for all $x,y,z\in A$, the following equations hold:
\begin{eqnarray*}
\nonumber x\triangleright_1(y\triangleright_2 z)+x\triangleright_2(y\triangleright_1 z)&=&(x\triangleright_1 y)\triangleright_2+(x\triangleright_2 y)\triangleright_1 z + (x\triangleleft_1 y)\triangleright_2 z
+(x\triangleleft_2 y)\triangleright_1 z\\
\nonumber&&+y\triangleright_1(x\triangleright_2 z)
+y\triangleright_2(x\triangleright_1 z) -(y\triangleleft_1
x)\triangleright_2 z- (y\triangleleft_2
x)\triangleright_1 z \\
&&- (y\triangleright_1 x)\triangleright_2 z-(y\triangleright_2 x)\triangleright_1 z;\\
\nonumber x\triangleright_1(y\triangleleft_2 z)+x\triangleright_2(y\triangleleft_1 z)&=&(x\triangleright_1 y)\triangleleft_2 z+(x\triangleright_2 y)\triangleleft_1 z+ y\triangleleft_1 (x\triangleright_2 z) + y\triangleleft_2 (x\triangleright_1 z)\\
&&+ y\triangleleft_2(x\triangleleft_1
z) + y\triangleleft_1(x\triangleleft_2
z)- (y\triangleleft_1 x)\triangleleft_2 z- (y\triangleleft_2 x)\triangleleft_1 z.
\end{eqnarray*}
\end{lem}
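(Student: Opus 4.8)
The plan is to unwind both sides of the definition of compatibility directly, using the fact — recorded just before this lemma — that for an L-dendriform algebra the associated pre-Lie multiplication is $x\cdot_A y = x\triangleright y - y\triangleleft x$, together with the bilinearity of all the operations involved. Write $\triangleright_t = k_1\triangleright_1 + k_2\triangleright_2$ and $\triangleleft_t = k_1\triangleleft_1 + k_2\triangleleft_2$ for scalars $k_1,k_2$. By definition, $(A,\triangleright_1,\triangleleft_1)$ and $(A,\triangleright_2,\triangleleft_2)$ are compatible if and only if $(A,\triangleright_t,\triangleleft_t)$ satisfies the two L-dendriform axioms for every choice of $k_1,k_2$.

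First I would substitute $\triangleright_t$ and $\triangleleft_t$ into each of the two defining identities of an L-dendriform algebra and expand everything multilinearly in $k_1,k_2$. Each identity becomes a polynomial identity in the two indeterminates $k_1,k_2$ that must hold identically; since it is (bi)quadratic — each term is a product of exactly two of the operations $\triangleright_i,\triangleleft_j$ — it collects into the coefficient of $k_1^2$, the coefficient of $k_2^2$, and the coefficient of $k_1k_2$. The $k_1^2$ coefficient is exactly the L-dendriform axiom for $(A,\triangleright_1,\triangleleft_1)$, which holds by hypothesis; likewise the $k_2^2$ coefficient is the axiom for $(A,\triangleright_2,\triangleleft_2)$. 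Hence $(A,\triangleright_t,\triangleleft_t)$ is an L-dendriform algebra for all $k_1,k_2$ if and only if, for each of the two axioms, the $k_1k_2$-cross term vanishes. Reading off these two cross-term identities for the first and the second L-dendriform axiom gives precisely the two displayed equations in the statement.

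The step requiring the most care is the bookkeeping of the cross terms: in each axiom every monomial $x \star (y \star z)$ with $\star \in \{\triangleright,\triangleleft\}$ splits into four pieces under the substitution, and one must check that pairing the ``index-1'' factor with the ``index-2'' factor and vice versa reproduces exactly the symmetrized expressions written in the lemma, with the correct signs inherited from the original axioms. This is routine but must be done term by term; there is no conceptual obstacle, only the risk of a sign or an omitted term. I would organize it by treating the first axiom and the second axiom separately, in each case listing the three-term-product monomials of the original axiom and replacing each by its cross-term contribution, then comparing with the target equation. Finally, I would note that the ``only if'' direction also follows from the same expansion: if $(A,\triangleright_t,\triangleleft_t)$ is an L-dendriform algebra for all $k_1,k_2$, then in particular taking $k_1=k_2=1$ and subtracting the $k_1^2$- and $k_2^2$-axioms leaves exactly the cross-term identities, so the two displayed equations are necessary as well. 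This completes the proof.
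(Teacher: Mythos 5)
Your polarization argument is correct: substituting $\triangleright_t=k_1\triangleright_1+k_2\triangleright_2$, $\triangleleft_t=k_1\triangleleft_1+k_2\triangleleft_2$ into the two quadratic L-dendriform axioms, the $k_1^2$- and $k_2^2$-coefficients hold by hypothesis, so compatibility is equivalent to the vanishing of the $k_1k_2$-cross terms, which are exactly the two displayed identities; this is precisely the computation the paper dismisses as ``straightforward.'' (The mention of the associated pre-Lie product $x\cdot_A y=x\triangleright y-y\triangleleft x$ is harmless but not actually needed here, since the axioms are stated purely in terms of $\triangleright$ and $\triangleleft$.)
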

\pf It is straightforward. \qed

\begin{lem}{\rm(\cite{BaiO})}\label{le:OL}
Let $T: V\longrightarrow \g$ be an $\huaO$-operator on a
pre-Lie algebra $(\g,\cdot_\g)$ associated to a representation
$(V;\rho,\mu)$. Then there exists an L-dendriform algebra structure
on $V$ defined by
\begin{equation}
u \triangleright v = \rho(T(u))v,\quad u \triangleleft v = -\mu(T(u))v,\quad
\forall u, v\in V.
\end{equation}  There is an induced L-dendriform algebra structure
on $T(V) = \{T(v)\mid v\in V\}\subset \g$ given by
\begin{equation}
T(u) \triangleright T(v) = T(u \triangleright v),\quad T(u) \triangleleft
T(v) = T(u \triangleleft v), \quad \forall u, v \in V.
\end{equation}
If in addition, $T$ is invertible, then there exists an L-dendriform
algebra structure on $\g$ defined by
\begin{equation}
x \triangleright y = T\rho(x)T^{-1}(y),\quad x \triangleleft y =
-T\mu(x)T^{-1}(y), \quad\forall x, y\in \g.
\end{equation}
\end{lem}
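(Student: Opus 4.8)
The plan is to prove all three assertions by direct substitution, the only real inputs being the $\huaO$-operator identity $T(u)\cdot_\g T(v)=T\big(\rho(T(u))v+\mu(T(v))u\big)$ and the two defining conditions of a representation: that $\rho$ is a Lie algebra representation of $\g^c$ on $V$ (so $\rho([a,b]^c)=\rho(a)\rho(b)-\rho(b)\rho(a)$) and the compatibility relation \eqref{representation condition 2}.

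First I would establish the L-dendriform structure on $V$ with $u\triangleright v=\rho(T(u))v$ and $u\triangleleft v=-\mu(T(u))v$. For the first L-dendriform identity, evaluated at $u,v,w\in V$, the left-hand side is $\rho(T(u))\rho(T(v))w$. On the right-hand side I would pair $(u\triangleright v)\triangleright w-(v\triangleleft u)\triangleright w$, which equals $\rho\big(T(\rho(T(u))v)+T(\mu(T(v))u)\big)w=\rho(T(u)\cdot_\g T(v))w$ by the $\huaO$-operator identity applied to $(u,v)$, and likewise $(u\triangleleft v)\triangleright w-(v\triangleright u)\triangleright w=-\rho(T(v)\cdot_\g T(u))w$ by the identity applied to $(v,u)$. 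Together with the remaining term $v\triangleright(u\triangleright w)=\rho(T(v))\rho(T(u))w$ and the fact that $\rho$ is a representation of $\g^c$, the right-hand side collapses to $\rho([T(u),T(v)]^c)w+\rho(T(v))\rho(T(u))w=\rho(T(u))\rho(T(v))w$, matching the left-hand side. For the second identity the left-hand side is $-\rho(T(u))\mu(T(v))w$; expanding the right-hand side and collapsing $(u\triangleright v)\triangleleft w-(v\triangleleft u)\triangleleft w=-\mu(T(u)\cdot_\g T(v))w$ via the $\huaO$-operator identity, what remains is $-\mu(T(u)\cdot_\g T(v))w-\mu(T(v))\rho(T(u))w+\mu(T(v))\mu(T(u))w$, which is exactly $-\rho(T(u))\mu(T(v))w$ once one rewrites $\mu(T(u)\cdot_\g T(v))w$ using \eqref{representation condition 2} with $x=T(u),\,y=T(v)$. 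This gives the first assertion.

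Next, for the induced structure on $T(V)$ the issue is well-definedness: $u\triangleright v$ and $u\triangleleft v$ depend only on $T(u)$, but a priori not only on $T(v)$, so one must check that $T(u\triangleright v)$ and $T(u\triangleleft v)$ do. Equivalently, I would show $\Ker T$ is a two-sided ideal of $(V,\triangleright,\triangleleft)$: if $T(v)=0$ then the $\huaO$-operator identity for the pairs $(u,v)$ and $(v,u)$ forces $T(\rho(T(u))v)=0$ and $T(\mu(T(u))v)=0$, while $\rho(T(v))u=\mu(T(v))u=0$ trivially, so $u\triangleright v,\ v\triangleright u,\ u\triangleleft v,\ v\triangleleft u$ all lie in $\Ker T$. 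Hence the quotient L-dendriform algebra $V/\Ker T$ exists and is carried isomorphically by $T$ onto $T(V)$ with $T(u)\triangleright T(v)=T(u\triangleright v)$, $T(u)\triangleleft T(v)=T(u\triangleleft v)$. For the last assertion, if $T$ is invertible it is a linear isomorphism $V\to\g$, so transporting $(\triangleright,\triangleleft)$ along $T$ makes $\g$ an L-dendriform algebra; substituting $u=T^{-1}(x)$, $v=T^{-1}(y)$ gives $x\triangleright y=T\rho(x)T^{-1}(y)$ and $x\triangleleft y=-T\mu(x)T^{-1}(y)$.

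The main obstacle is nothing beyond the bookkeeping in the first assertion, and the one piece of insight worth isolating is the grouping used for the first L-dendriform identity: the sum $(u\triangleright v)\triangleright w+(u\triangleleft v)\triangleright w-(v\triangleleft u)\triangleright w-(v\triangleright u)\triangleright w$ of the four ``outer'' terms on the right-hand side is exactly $\rho\big([T(u),T(v)]^c\big)w$ (two applications of the $\huaO$-operator identity), so that the whole non-associativity is absorbed into the sub-adjacent Lie bracket and then annihilated against the last term $\rho(T(v))\rho(T(u))w$ by the representation property of $\rho$. Everything else is formal manipulation with the $\huaO$-operator identity and \eqref{representation condition 2}.
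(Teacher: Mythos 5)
Your argument is correct and complete: the two L-dendriform identities reduce, via the two groupings you isolate, to $\rho$ being a representation of the sub-adjacent Lie algebra and to the compatibility relation \eqref{representation condition 2}, and the only genuinely delicate point — that the induced products on $T(V)$ are well defined — is properly handled by showing $\Ker T$ is a two-sided ideal using the $\huaO$-operator identity on the pairs $(u,v)$ and $(v,u)$. Note that the paper itself gives no proof of this lemma: it is imported verbatim from \cite{BaiO}, so there is no in-text argument to compare against; your direct verification is exactly the standard one.
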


\begin{pro}\label{pro:ooLd}
Let $T_1,T_2: V\longrightarrow \g$ be two $\huaO$-operators on a
pre-Lie algebra $(\g,\cdot_\g)$ associated to a representation
$(V;\rho,\mu)$. Assume that $T_1$ and $T_2$ are compatible. Then
$(V, \triangleright_1, \triangleleft_1)$ and $(V,
\triangleright_2, \triangleleft_2)$ are compatible L-dendriform
algebras, where
\begin{eqnarray*}
 u \triangleright_1 v &=& \rho(T_1(u))v, \quad u \triangleleft_1 v =
-\mu(T_1(u))v, \\
u \triangleright_2 v &=& \rho(T_2(u))v,\quad u
\triangleleft_2 v = -\mu(T_2(u))v, \quad \forall u, v \in V.
\end{eqnarray*}
If in addition, both $T_1$ and $T_2$ are invertible, then $(\g,
\triangleright_1, \triangleleft_1)$ and $(\g, \triangleright_2,
\triangleleft_2)$ are compatible L-dendriform algebras, where
\begin{eqnarray*}
x \triangleright_1 y &=& T_1\rho(x)T_1^{-1}(y),\quad x \triangleleft_1 y =
-T_1\mu(x)T_1^{-1}(y), \\
x \triangleright_2 y &=&T_2\rho(x)T_2^{-1}(y),\quad x \triangleleft_2 y =
-T_2\mu(x)T_2^{-1}(y),\quad \forall x, y\in \g.
 \end{eqnarray*}
\end{pro}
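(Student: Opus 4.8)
The plan is to reduce everything to the defining feature of ``compatible'' objects --- namely that \emph{every} linear combination is again an object of the same type --- and then to exploit that the assignment of an L-dendriform structure to an $\huaO$-operator in Lemma~\ref{le:OL} is linear in the operator, because $\rho$ and $\mu$ are linear maps.

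For the assertion about $V$: given $k_1,k_2\in\K$, compatibility of $T_1,T_2$ says that $T:=k_1T_1+k_2T_2$ is again an $\huaO$-operator on $(\g,\cdot_\g)$ associated to $(V;\rho,\mu)$. By Lemma~\ref{le:OL} the formulas $u\triangleright v=\rho(T(u))v$, $u\triangleleft v=-\mu(T(u))v$ then define an L-dendriform algebra on $V$. Since $\rho(T(u))=k_1\rho(T_1(u))+k_2\rho(T_2(u))$ and $\mu(T(u))=k_1\mu(T_1(u))+k_2\mu(T_2(u))$, this structure is exactly $(V,\,k_1\triangleright_1+k_2\triangleright_2,\,k_1\triangleleft_1+k_2\triangleleft_2)$. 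As $k_1,k_2$ are arbitrary, $(V,\triangleright_1,\triangleleft_1)$ and $(V,\triangleright_2,\triangleleft_2)$ are compatible by definition. This part is routine.

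For the assertion about $\g$ (with $T_1,T_2$ invertible) the same shortcut is not literally available: the structure $x\triangleright_i^\g y=T_i\rho(x)T_i^{-1}(y)$ is obtained by transporting the $V$-structure along the isomorphism $T_i$, and since $T_1\neq T_2$ the two $\g$-structures are not simultaneously pushed forward by one map; moreover $x\triangleright^\g y=T\rho(x)T^{-1}(y)$ with $T=k_1T_1+k_2T_2$ is \emph{not} the combination $k_1\triangleright_1^\g+k_2\triangleright_2^\g$, so one cannot simply invoke Lemma~\ref{le:OL} for $T$. Instead I would argue as follows. Each $(\g,\triangleright_i^\g,\triangleleft_i^\g)$ is already an L-dendriform algebra by Lemma~\ref{le:OL}, so by the criterion for compatibility of L-dendriform algebras established above it suffices to verify the two cross-term identities relating $\triangleright_1^\g,\triangleleft_1^\g$ and $\triangleright_2^\g,\triangleleft_2^\g$. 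Writing $x=T_1(u_1)=T_2(u_2)$, $y=T_1(v_1)=T_2(v_2)$, $z=T_1(w_1)=T_2(w_2)$, substituting $x\triangleright_i^\g y=T_i(u_i\triangleright_i v_i)$ and $x\triangleleft_i^\g y=T_i(u_i\triangleleft_i v_i)$, and using the $\huaO$-operator identities $T_i(u)\cdot_\g T_i(v)=T_i\big(\rho(T_iu)v+\mu(T_iv)u\big)$ together with the fact that each $(V,\triangleright_i,\triangleleft_i)$ is L-dendriform, these cross-term identities should collapse to the single compatibility equation \eqref{eq:CN1} for $T_1$ and $T_2$. Equivalently, one may use that $\huaN:=T_1\circ T_2^{-1}$ is a Nijenhuis operator (Proposition~\ref{pro:TTN}), note the conjugation relations $x\triangleright_1^\g y=\huaN\big(x\triangleright_2^\g \huaN^{-1}(y)\big)$ and $x\triangleleft_1^\g y=\huaN\big(x\triangleleft_2^\g \huaN^{-1}(y)\big)$, and check that such an $\huaN$-conjugate of an L-dendriform structure is automatically compatible with the original by invoking the Nijenhuis identity \eqref{eq:nij-opera}.

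The main obstacle is precisely this last verification: it is a long, purely mechanical rewriting, and the principal nuisance is the bookkeeping of which arguments are expressed through $T_1$ and which through $T_2$ (respectively, of how the outer $\huaN$ and the inner $\huaN^{-1}$ interact with the two L-dendriform axioms). There is no conceptual difficulty beyond \eqref{eq:CN1} and the already-established individual L-dendriform axioms; one only has to organize the computation so that the cross terms match \eqref{eq:CN1} term by term.
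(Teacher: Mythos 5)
Your treatment of the $V$-level statement is complete and coincides with the paper's argument: the paper's entire proof is ``It follows directly from Lemma~\ref{le:OL} and the definitions,'' and for the structures on $V$ this is literally true because $u\mapsto\rho(T(u))v$ and $u\mapsto\mu(T(u))v$ are linear in $T$, so the structure attached to $k_1T_1+k_2T_2$ is exactly $k_1\triangleright_1+k_2\triangleright_2$, $k_1\triangleleft_1+k_2\triangleleft_2$.

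For the $\g$-level statement you have correctly identified the real issue --- $x\triangleright y=T\rho(x)T^{-1}(y)$ is not linear in $T$, so one cannot verify the definition of compatibility by applying Lemma~\ref{le:OL} to the combination $k_1T_1+k_2T_2$ --- and this is a legitimate objection to the paper's one-line proof. But your proposal stops exactly where the content begins: you assert that the cross-term identities ``should collapse'' to \eqref{eq:CN1} and defer the verification as mechanical bookkeeping without performing it. It is not mere bookkeeping. Writing $A_i(x)=T_i\rho(x)T_i^{-1}$, both $\g$-level structures have $\cdot_\g$ as their associated pre-Lie product, and since $\rho$ is a representation of the sub-adjacent Lie algebra one has $A_i([x,y]^c)=A_i(x)A_i(y)-A_i(y)A_i(x)$; the first cross-term identity of the compatibility lemma then reduces to the statement that the operators $A_1(x)-A_2(x)$ commute with one another for all $x$, a nontrivial condition that does not follow formally from \eqref{eq:CN1} (which in this invertible setting only yields a bilinear relation between $A_1-A_2$ and the corresponding difference of the $\triangleleft$-operators). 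Your alternative route --- that the conjugate of an L-dendriform structure by the Nijenhuis operator $N=T_1\circ T_2^{-1}$ is ``automatically'' compatible with the original by the Nijenhuis identity \eqref{eq:nij-opera} alone --- is also unjustified: for a general invertible Nijenhuis operator the conjugate need not even be L-dendriform (Proposition~\ref{pro:NT} requires the extra condition \eqref{eq:ON}), and the compatibility you would be invoking is essentially Corollary~\ref{co:BN}, which the paper derives from the very proposition you are proving. So the second half of your argument is a plan rather than a proof; the decisive computation remains to be done.
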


\pf It follows directly from Lemma~\ref{le:OL} and the definitions of compatible
$\huaO$-operators and compatible L-dendriform algebras. \qed

\begin{cor}\label{co:BN}
Let $T: V\longrightarrow \g$ be an $\huaO$-operator on a
pre-Lie algebra $(\g,\cdot_\g)$ associated to a representation
$(V;\rho,\mu)$. If there exists an invertible Nijenhuis operator $N$
such that $N\circ T$ is also an $\huaO$-operator on $(\g,\cdot_\g)$
associated to $(V;\rho,\mu)$, then $(V, \triangleright_1,
\triangleleft_1)$ and $(V, \triangleright_2, \triangleleft_2)$ are
compatible L-dendriform algebras, where
\begin{eqnarray*}
u \triangleright_1 v &=& \rho(T(u))v,\quad u \triangleleft_1 v = -\mu(T(u))v,\\
 u \triangleright_2 v &=& \rho(NT(u))v,\quad u \triangleleft_2 v = -\mu(NT(u))v, \quad \forall u, v\in V.
\end{eqnarray*}
If in addition, $T$ is invertible, then  $(\g, \triangleright_1,
\triangleleft_1)$ and $(\g, \triangleright_2, \triangleleft_2)$ are
compatible L-dendriform algebras, where
\begin{eqnarray*}
x \triangleright_1 y &=& T\rho(x)T^{-1}(y),\quad x \triangleleft_1 y =
-T\mu(x)T^{-1}(y), \\
x \triangleright_2 y &=& NT\rho(x)(NT)^{-1}(y),\quad
x\triangleleft_2 y = -NT\mu(x)(NT)^{-1}(y),\quad \forall x, y\in \g.
\end{eqnarray*}
\end{cor}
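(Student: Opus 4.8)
The plan is to deduce this directly from Proposition \ref{pro:NT} together with Proposition \ref{pro:ooLd}, so that essentially no new computation is needed. First I would invoke Proposition \ref{pro:NT}: since $T$ is an $\huaO$-operator on $(\g,\cdot_\g)$ associated to $(V;\rho,\mu)$, $N$ is an invertible Nijenhuis operator on $(\g,\cdot_\g)$, and $N\circ T$ is again an $\huaO$-operator associated to $(V;\rho,\mu)$, the ``in addition'' part of that proposition guarantees that $T$ and $N\circ T$ are compatible $\huaO$-operators. (The intermediate identity \eqref{eq:ON} is automatically verified in this situation precisely because $N\circ T$ is assumed to be an $\huaO$-operator, but all that is needed downstream is the conclusion that the pair $\{T, N\circ T\}$ is compatible.)

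Next I would set $T_1:=T$ and $T_2:=N\circ T$, a compatible pair of $\huaO$-operators on $(\g,\cdot_\g)$ associated to $(V;\rho,\mu)$, and apply Proposition \ref{pro:ooLd} to this pair. This immediately yields that the two L-dendriform algebra structures on $V$ given by $u\triangleright_1 v=\rho(T(u))v$, $u\triangleleft_1 v=-\mu(T(u))v$ and $u\triangleright_2 v=\rho(NT(u))v$, $u\triangleleft_2 v=-\mu(NT(u))v$ are compatible, which is exactly the first assertion. For the second assertion I would observe that if $T$ is invertible then $T_2=N\circ T$ is also invertible, being a composition of invertible maps, with $T_2^{-1}=T^{-1}\circ N^{-1}$; hence the invertible case of Proposition \ref{pro:ooLd} applies to $(T_1,T_2)=(T, N\circ T)$, producing compatible L-dendriform algebra structures on $\g$ with $x\triangleright_1 y=T\rho(x)T^{-1}(y)$, $x\triangleleft_1 y=-T\mu(x)T^{-1}(y)$ and $x\triangleright_2 y=(NT)\rho(x)(NT)^{-1}(y)$, $x\triangleleft_2 y=-(NT)\mu(x)(NT)^{-1}(y)$, which is precisely the stated formula.

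The only point requiring any care — and it is a minor bookkeeping matter rather than a genuine obstacle — is matching conventions: one must check that the structures labelled $\triangleright_2,\triangleleft_2$ in the statement are exactly those delivered by Proposition \ref{pro:ooLd} when its $T_2$ is specialized to $N\circ T$, and record the invertibility of $N\circ T$ (and its inverse) in the ``in addition'' case. Since the substantive work has already been carried out in Propositions \ref{pro:NT} and \ref{pro:ooLd}, the corollary follows at once.
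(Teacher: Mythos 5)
Your proposal is correct and is exactly the argument the paper intends: Proposition \ref{pro:NT} gives that $T$ and $N\circ T$ are compatible $\huaO$-operators under the stated hypotheses, and Proposition \ref{pro:ooLd} applied to the pair $(T_1,T_2)=(T,N\circ T)$ (noting $N\circ T$ is invertible when $T$ is) yields both assertions. The bookkeeping you flag is the only content beyond citing those two propositions, and you have handled it correctly.
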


\begin{ex}{\rm
Let $\huaR$ be a Rota-Baxter operator of weight zero on a pre-Lie
algebra $(\g,\cdot_\g)$. Suppose that there exists an invertible
Nijenhuis operator $N$ such that  $N\circ \huaR$ is also a Rota-Baxter
operator of weight zero, then $(\g, \triangleright_1,
\triangleleft_1)$ and $(\g, \triangleright_2, \triangleleft_2)$
are compatible L-dendriform algebras, where
\begin{eqnarray*}
x \triangleright_1 y &=& \huaR(x)\cdot_\g y,\quad x \triangleleft_1 y = -y\cdot_\g \huaR(x),\\
 x \triangleright_2 y &=& (N\circ \huaR)(x) \cdot_\g y,\quad x \triangleleft_2 y = -y\cdot_\g (N\circ \huaR)(x), \quad \forall x, y\in \g.
\end{eqnarray*}
If in addition, $\huaR$ is invertible, then  $(\g, \triangleright_1,
\triangleleft_1)$ and $(\g, \triangleright_2, \triangleleft_2)$
are also compatible L-dendriform algebras, where
\begin{eqnarray*}
x \triangleright_1 y &=& \huaR(x\cdot_\g \huaR^{-1}(y)),\quad x
\triangleleft_1 y =
-\huaR(\huaR^{-1}(y)\cdot_\g x), \\
x \triangleright_2 y &=& (N\circ \huaR)(x\cdot_\g (N\circ
\huaR)^{-1}(y)),\quad x\triangleleft_2 y = -(N\circ \huaR)((N\circ
\huaR)^{-1}(y)\cdot_\g x),\quad \forall x, y\in \g.
\end{eqnarray*}
}
\end{ex}

\section{Pseudo-Hessian-Nijenhuis pre-Lie
algebras}   In this section, we consider the case of the dual
representation of the regular representation of a pre-Lie algebra
for the study in the previous section, that is, the compatible
$\frks$-matrices (see Example~\ref{ex:crs}) and Nijenhuis
operators. In particular, there is an interesting geometric
interpretation here. Note that the inverse of an $\frks$-matrix
corresponds to a pseudo-Hessian structure. So it is natural to
introduce the notion of a pseudo-Hessian-Nijenhuis structure on a
pre-Lie algebra as a pair of a pseudo-Hessian structure and a
Nijenhuis operator satisfying  certain compatibility conditions.
Such a structure  induces a sequence of pseudo-Hessian structures
as well as pseudo-Hessian-Nijenhuis structures. We show that two
compatible invertible $\frks$-matrices   give rise to a
pseudo-Hessian-Nijenhuis structure. Conversely,  a
pseudo-Hessian-Nijenhuis structure must be obtained in this way.
Finally, we investigate the pre-Lie algebra structure on $\g^*$
associated to a pair of compatible $\frks$-matrices  on a pre-Lie algebra $(\g,\cdot_\g)$.

Let $(\g,\cdot_\g)$ be a pre-Lie algebra and $N:\g\longrightarrow \g$ a   linear operator. Suppose that $\frkB\in \Sym^2(\g^*)$ satisfies that
\begin{equation}\label{eq:Hess1}
\frkB(N(x),y)=\frkB(x,N(y)),\quad \forall~x,y\in\g.
\end{equation}
Define a sequence of bilinear forms by $\frkB_k(x,y)=\frkB(x,N^k(y))$ for all $k\in \Integ.$
It is obvious that $\frkB_k\in \Sym^2(\g^*)$.

\begin{defi}
Let $\frkB$ be a pseudo-Hessian structure and $N$ an invertible Nijenhuis operator on a pre-Lie algebra $(\g,\cdot_\g)$. Then $(\frkB,N)$ is called a {\bf pseudo-Hessian-Nijenhuis structure} on the pre-Lie algebra  $(\g,\cdot_\g)$ if  \eqref{eq:Hess1} holds and $ \dM^T\frkB_1=0$. More precisely,
$$
\frkB(x\cdot_\g y,N(z))-\frkB(x, N(y\cdot_\g z))=\frkB(y\cdot_\g
x,N(z))-\frkB(y,N(x\cdot_\g z)),\quad \forall~x,y,z\in\g.
$$

  A pre-Lie algebra $(\g,\cdot_\g)$ equipped with a pseudo-Hessian-Nijenhuis structure $(\frkB,N)$ is called a {\bf pseudo-Hessian-Nijenhuis pre-Lie algebra}.
\end{defi}

\begin{rmk}   Later we will see that many properties for a
pseudo-Hessian-Nijenhuis pre-Lie algebra are totally parallel to
the ones for a symplectic-Nijenhuis Lie algebra.
The notion of a symplectic-Nijenhuis structure on a Lie algebra,
or more generally on a Lie algebroid, was introduced in
\cite[Definition 8.1]{Kos} as generalizations of Poisson-Nijenhuis
structures \cite{KosPN,MM}. Let $(\g,[\cdot,\cdot]_\g,\omega)$ be
a symplectic Lie algebra and $N$ a Nijenhuis operator on the Lie
algebra $(\g,[\cdot,\cdot]_\g)$. A pair $(\omega,N)$ is called a
symplectic-Nijenhuis structure\footnote{It is called an $\Omega
N$-structure in \cite{Kos}.} if
\begin{eqnarray}\label{eq:sn}
\omega(N(x),y)=\omega(x,N(y)),\quad \forall~x,y\in\g
\end{eqnarray}
and $\omega_N$ is a $2$-cocycle on the Lie algebra $(\g,[\cdot,\cdot]_\g)$, where $\omega_N:\otimes^2 \g\longrightarrow \g $ is defined by $\omega_N(x,y)=\omega(N(x),y)$. By \eqref{eq:sn}, $\omega_N$ is skew-symmetric.
\end{rmk}

Now we give some examples for pseudo-Hessian-Nijenhuis pre-Lie algebras.
\begin{ex}{\rm
   Let
$(\g,\cdot_\g)$ be a 2-dimensional pre-Lie algebra with a basis
$\{e_1,e_2\}$ whose non-zero products are given as follows:
$$e_2\cdot e_1=-e_1,\quad e_2\cdot e_2=e_2.$$
  Let $\{e_1^*,e_2^*\}$ be the dual basis and $\frkB$   a
nondegenerate symmetric bilinear form given by
  $$\frkB=a e_1^*\otimes e_2^*+ a e_2^*\otimes e_1^*+b e_2^*\otimes e_2^*,\quad a\neq 0.$$
  Then $(\g,\cdot,\frkB)$ is a pseudo-Hessian pre-Lie algebra.

It is straightforward to verify that all $ N =\begin{bmatrix}c&
d\\ 0&c\end{bmatrix} $ with $c\neq0$ are  invertible Nijenhuis
  operators  on the pre-Lie algebra $(\g,\cdot)$.

By direct calculation, we have
\begin{eqnarray*}
\frkB(N(x),y)&=&\frkB(x,N(y)),\\
 \dM^T\frkB_1(x,y,z)&=&0,\quad \forall~x,y,z\in\g,
\end{eqnarray*}
where $\frkB_1(x,y)=\frkB(x,N(y))$. Thus $(\frkB,N)$ is a pseudo-Hessian-Nijenhuis structure on the pre-Lie algebra $(\g,\cdot)$.
}
\end{ex}

\begin{ex}{\rm
    Let $(\g,\cdot_\g)$ be a 3-dimensional pre-Lie algebra
with a basis $\{e_1,e_2,e_3\}$ whose non-zero products are given
as follows:
$$e_3\cdot e_2=e_2,\quad e_3\cdot e_3=-e_3.$$
  Let $\{e_1^*,e_2^*,e_3^*\}$ be the dual basis and $\frkB$
  a nondegenerate symmetric bilinear form given by
  $$\frkB=a e_1^*\otimes e_1^*+ b e_2^*\otimes e_3^*+b e_3^*\otimes e_2^*+ce_3^*\otimes e_3^*,\quad ab\neq 0.$$
  Then $(\g,\cdot,\frkB)$ is a pseudo-Hessian pre-Lie algebra.

It is straightforward to verify that all $ N =\begin{bmatrix}d&
0&0\\ 0&e&f\\0&0&e\end{bmatrix} $ with $de\neq0$ are
invertible Nijenhuis  operators  on the pre-Lie
algebra $(\g,\cdot)$.

By direct calculation, we have
\begin{eqnarray*}
\frkB(N(x),y)&=&\frkB(x,N(y)),\\
 \dM^T\frkB_1(x,y,z)&=&0,\quad \forall~x,y,z\in\g,
\end{eqnarray*}
where $\frkB_1(x,y)=\frkB(x,N(y))$. Thus $(\frkB,N)$ is a pseudo-Hessian-Nijenhuis structure on the pre-Lie algebra $(\g,\cdot)$.
}
\end{ex}

We give a useful lemma.

\begin{lem}
With above notations, for $\frkB_k\in \Sym^2(\g^*),~k\in\Integ$, we have
\begin{eqnarray}
&&\dM^T\frkB_{k+1}(x,y,z)=\dM^T\frkB_{k}(N(x),y,z)+\dM^T\frkB_{k}(x,N(y),z)-\dM^T\frkB_{k-1}(N(x),N(y),z)\nonumber\\
&&\qquad-\frkB\Big([N(x),N(y)]^c-N([N(x),y]^c+[x,N(y)]^c-N([x,y]^c),N^{k-1}(z)\Big).
\label{eq:Hess2}
\end{eqnarray}
Here we set $\frkB_0=\frkB$.
\end{lem}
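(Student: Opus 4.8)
The plan is to prove \eqref{eq:Hess2} by a direct expansion, the only inputs being the explicit form of the coboundary operator $\dM^T$ on a symmetric $2$-cochain, the defining relation $\frkB_k(x,y)=\frkB(x,N^k(y))$, and the compatibility condition \eqref{eq:Hess1}; in particular, the identity is purely formal and uses neither that $N$ is a Nijenhuis operator nor that $\frkB$ is pseudo-Hessian. First I would record that, specializing \eqref{eq:pre-Lie cohomology} to the trivial representation with $n=2$, one has for any $\psi\in\Sym^2(\g^*)$
$$\dM^T\psi(x,y,z)=\psi(x,y\cdot_\g z)-\psi(y,x\cdot_\g z)-\psi([x,y]^c,z).$$
Applying this with $\psi=\frkB_{k+1}$ and inserting $\frkB_{k+1}(u,v)=\frkB(u,N^{k+1}(v))$ rewrites the left-hand side as $\frkB(x,N^{k+1}(y\cdot_\g z))-\frkB(y,N^{k+1}(x\cdot_\g z))-\frkB([x,y]^c,N^{k+1}(z))$.

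Next I would expand each of the four summands on the right-hand side in the same way. For $\dM^T\frkB_k(N(x),y,z)$ and $\dM^T\frkB_k(x,N(y),z)$ I would use \eqref{eq:Hess1} to absorb the extra $N$ in the first slot into the last slot (so that, e.g., $\frkB(N(x),N^k(w))=\frkB(x,N^{k+1}(w))$); for $\dM^T\frkB_{k-1}(N(x),N(y),z)$, which produces terms with $N$ in front of one or both of the first two arguments, the same relation moves those $N$'s onto the $N^{k-1}(z)$, raising it to $N^k(z)$ or $N^{k+1}(z)$ and requiring \eqref{eq:Hess1} twice. The fourth summand $-\frkB\big([N(x),N(y)]^c-N([N(x),y]^c+[x,N(y)]^c-N([x,y]^c)),N^{k-1}(z)\big)$ I would expand by linearity and again push the outer $N$'s onto $N^{k-1}(z)$ using \eqref{eq:Hess1}, turning it into $-\frkB([N(x),N(y)]^c,N^{k-1}(z))+\frkB([N(x),y]^c,N^k(z))+\frkB([x,N(y)]^c,N^k(z))-\frkB([x,y]^c,N^{k+1}(z))$.

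Finally I would collect terms. After this rewriting, the right-hand side is a sum of expressions of the shape $\frkB(\text{argument},N^m(z'))$, and one checks that all contributions involving $N^k(N(x)\cdot_\g z)$, $N^k(N(y)\cdot_\g z)$, $[N(x),y]^c$, $[x,N(y)]^c$ and $[N(x),N(y)]^c$ cancel pairwise among the four summands, leaving exactly $\frkB(x,N^{k+1}(y\cdot_\g z))-\frkB(y,N^{k+1}(x\cdot_\g z))-\frkB([x,y]^c,N^{k+1}(z))=\dM^T\frkB_{k+1}(x,y,z)$. For negative indices $k$ the computation is unchanged, since in the pseudo-Hessian-Nijenhuis setting $N$ is invertible and \eqref{eq:Hess1} then holds with $N$ replaced by $N^{-1}$, hence with every integer power of $N$. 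The only real work is bookkeeping — tracking signs on the bracket terms and invoking \eqref{eq:Hess1} the right number of times — so I would present the argument as a single chain of equalities rather than a case analysis.
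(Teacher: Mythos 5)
Your proposal is correct and follows essentially the same route as the paper: write out $\dM^T\psi(x,y,z)=\psi(x,y\cdot_\g z)-\psi(y,x\cdot_\g z)-\psi([x,y]^c,z)$ for each of the four terms on the right-hand side, use \eqref{eq:Hess1} to shuttle the $N$'s from the first slot onto powers of $N$ acting on the last slot, and observe the pairwise cancellations, leaving $\dM^T\frkB_{k+1}(x,y,z)$. Your observations that the identity is purely formal (needing neither the Nijenhuis nor the closedness conditions) and that negative $k$ is covered by invertibility of $N$ are both accurate and consistent with the paper's treatment.
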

\pf By the definition of the coboundary operator $\dM^T$ associated to the  trivial representation, we have
\begin{eqnarray*}
\dM^T\frkB_{k+1}(x,y,z)&=&-\frkB(y,N^{k+1}(x\cdot_\g z))+\frkB(x,N^{k+1}(y\cdot_\g z))-\frkB([x,y]^c,N^{k+1}(z)),\\
\dM^T\frkB_{k}(Nx,y,z)&=&-\frkB(y,N^{k}(N(x)\cdot_\g z))+\frkB(N(x),N^{k}(y\cdot_\g z))-\frkB([N(x),y]^c,N^{k}(z)),\\
\dM^T\frkB_{k}(x,Ny,z)&=&-\frkB(N(y),N^{k}(x\cdot_\g z))+\frkB(x,N^{k}(N(y)\cdot_\g z))-\frkB([x,N(y)]^c,N^{k}(z)),\\
\dM^T\frkB_{k-1}(Nx,Ny,z)&=&-\frkB(N(y),N^{k-1}(N(x)\cdot_\g z))+\frkB(N(x),N^{k-1}(N(y)\cdot_\g z))\\
&&-\frkB([N(x),N(y)]^c,N^{k-1}(z)).
\end{eqnarray*}
Substituting the expression obtained into the right hand side of \eqref{eq:Hess2} and take \eqref{eq:Hess1} into account, we get the desired equality.\qed

\begin{thm}\label{thm:Hess1}
Let $(\frkB,N)$ be a pseudo-Hessian-Nijenhuis structure on a pre-Lie algebra $(\g,\cdot_\g)$. Then for all $k\in\Integ$, $\frkB_k$ is a pseudo-Hessian structure on $(\g,\cdot_\g)$, i.e. $\dM^T\frkB_k=0$. Furthermore, for all $k,l\in\Integ$, $(\frkB_k,N^l)$  is a pseudo-Hessian-Nijenhuis structure on $(\g,\cdot_\g)$.
\end{thm}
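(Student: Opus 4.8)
The plan is to read the identity \eqref{eq:Hess2} from the preceding lemma as a recursion on the index $k$. The first thing I would observe is that, by the earlier proposition asserting that a Nijenhuis operator on a pre-Lie algebra is also a Nijenhuis operator on the sub-adjacent Lie algebra $(\g,[\cdot,\cdot]^c)$, the bracket expression $[N(x),N(y)]^c-N\big([N(x),y]^c+[x,N(y)]^c-N([x,y]^c)\big)$ vanishes for all $x,y\in\g$. Hence the last term on the right-hand side of \eqref{eq:Hess2} disappears and \eqref{eq:Hess2} collapses to the three-term recursion
\[
\dM^T\frkB_{k+1}(x,y,z)=\dM^T\frkB_{k}(N(x),y,z)+\dM^T\frkB_{k}(x,N(y),z)-\dM^T\frkB_{k-1}(N(x),N(y),z),
\]
valid for every $k\in\Integ$.

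From here I would induct in both directions, starting from two base cases: $\dM^T\frkB_0=\dM^T\frkB=0$ since $\frkB$ is pseudo-Hessian, and $\dM^T\frkB_1=0$ by the definition of a pseudo-Hessian-Nijenhuis structure. For the upward step, if $\dM^T\frkB_{k-1}=\dM^T\frkB_k=0$ then the recursion immediately gives $\dM^T\frkB_{k+1}=0$. For the downward step, rearranging the recursion expresses $\dM^T\frkB_{k-1}(N(x),N(y),z)$ through $\dM^T\frkB_k$ and $\dM^T\frkB_{k+1}$; if these both vanish, then $\dM^T\frkB_{k-1}(N(x),N(y),z)=0$ for all $x,y,z$, and since $N$ is invertible the pair $(N(x),N(y))$ runs over all of $\g\times\g$, so $\dM^T\frkB_{k-1}=0$. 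Thus $\dM^T\frkB_k=0$ for all $k\in\Integ$; combined with the fact that each $\frkB_k$ is symmetric (already noted) and nondegenerate (because $\frkB$ is nondegenerate and $N^k$ is invertible), this shows $\frkB_k$ is a pseudo-Hessian structure.

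For the ``furthermore'' clause, fix $k,l\in\Integ$. By the first part of the theorem $\frkB_k$ is already a pseudo-Hessian structure, so it remains to verify that $N^l$ is an invertible Nijenhuis operator, that the compatibility \eqref{eq:Hess1} holds for the pair $(\frkB_k,N^l)$, and that $\dM^T$ annihilates the form $(x,y)\mapsto\frkB_k(x,N^l(y))$. The first is the earlier theorem that any polynomial --- or Laurent polynomial, $N$ being invertible --- of a Nijenhuis operator is again a Nijenhuis operator, together with the obvious invertibility of $N^l$. The second follows by iterating \eqref{eq:Hess1} (and its analogue for $N^{-1}$, valid since $N$ is invertible) to obtain $\frkB(N^a(x),N^b(y))=\frkB(x,N^{a+b}(y))$ for all $a,b\in\Integ$; with $a=l$, $b=k$ this reads $\frkB_k(N^l(x),y)=\frkB(x,N^{k+l}(y))=\frkB_k(x,N^l(y))$. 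For the third, note that $\frkB_k(x,N^l(y))=\frkB_{k+l}(x,y)$, so the required $2$-cocycle condition is $\dM^T\frkB_{k+l}=0$, which is the first part of the theorem applied at the index $k+l\in\Integ$.

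The computations are all routine once \eqref{eq:Hess2} is available, so I do not expect a genuine obstacle; the only places the hypotheses are really used are the invertibility of $N$ (to run the recursion downward and to make sense of negative powers of $N$) and the already-established fact that a Nijenhuis operator on a pre-Lie algebra restricts to a Nijenhuis operator on its sub-adjacent Lie algebra (to kill the extra term in \eqref{eq:Hess2}).
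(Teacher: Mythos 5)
Your proof is correct and follows essentially the same route as the paper's: use the Nijenhuis property on the sub-adjacent Lie algebra to kill the last term of \eqref{eq:Hess2}, then induct on $k$ from the base cases $\dM^T\frkB_0=\dM^T\frkB_1=0$. You additionally spell out the downward induction (via invertibility of $N$) and the ``furthermore'' clause, both of which the paper leaves implicit, and these details are handled correctly.
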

\begin{proof}
  Since $N$ is a Nijenhuis operator on the pre-Lie algebra $(\g,\cdot_\g)$, $N$ is also the Nijenhuis operator for the corresponding sub-adjacent Lie algebra $(\g,[\cdot,\cdot]^c)$. Thus by \eqref{eq:Hess2}, we have
  \begin{eqnarray*}
\dM^T\frkB_{k+1}(x,y,z)&=&\dM^T\frkB_{k}(N(x),y,z)+\dM^T\frkB_{k}(x,N(y),z)-\dM^T\frkB_{k-1}(N(x),N(y),z).
\end{eqnarray*}
Because of $\dM^T\frkB=\dM^T\frkB_1=0$, it follows that $\dM^T\frkB_2=0$. By induction on $k$, it follows that $\dM^T\frkB_k=0$ for all $k\in\Integ$. We finish the proof.
\end{proof}

\emptycomment{The following theorem is useful in checking that a pair $(\frkB,N)$ is a pseudo-Hessian-Nijenhuis structure for pseudo-Hessian pre-Lie algebra.

\begin{thm}\label{thm:Hess2}
Let $(\g,\frkB)$ be a pseudo-Hessian pre-Lie algebra and  $N:\g\longrightarrow \g$ an invertible linear operator such that $\frkB(Nx,y)=\frkB(x,Ny)$, then the following conditions are equivalent:
\begin{itemize}
\item[$\rm(a)$] $(\frkB,N)$ is a pseudo-Hessian-Nijenhuis structure;
\item[$\rm(b)$] $\dM^T \frkB_1=\dM^T \frkB_2=0$, where $\frkB_k$ are defined as above.
   \end{itemize}
\end{thm}
\pf If condition $(a)$ holds, by Theorem \ref{thm:Hess1}, $(b)$ follows immediately.

Conversely, suppose that $(b)$ holds. By \eqref{eq:Hess2}, we have
\begin{eqnarray*}
&&-\delta\frkB_{2}(x,y,z)+\delta\frkB_{1}(Nx,y,z)+\delta\frkB_{1}(x,Ny,z)-\delta\frkB_{1}(Nx,Ny,z)\nonumber\\
&=&\frkB_0([Nx,Ny]^c-N[Nx,y]^c-N[x,Ny]^c+N^2[x,y]^c,z).
\end{eqnarray*}
Since the left hand of the above equality is zero and $\frkB_0$ is nondegenerate, thus we have
$$[Nx,Ny]^c-N[Nx,y]^c-N[x,Ny]^c+N^2[x,y]^c=0,$$
which implies that $N$ is a  Nijenhuis operator for the sub-adjacent Lie algebra $(\g,[\cdot,\cdot]^c)$.\qed
\begin{ex}(??)
Let $(G,\frkB)$ be a pseudo-Riemannian Lie group and $\g$ be its corresponding Lie algebra. We denote the corresponding Levi-Civita connection by $\nabla^\frkB$. If the curvature tensor $R$ of $\nabla^\frkB$ is zero, then $(\g,\cdot,\frkB)$ is a pre-Hessian Lie algebra, where the operation $\cdot$ is given by $x\cdot y=\nabla^\frkB_xy,\quad x,y\in\g$.

Let $N:\g\longrightarrow \g$ be a linear operator.  If $\frkB(Nx,y)=\frkB(x,Ny)$ and $N(\nabla^\frkB_xy)=\nabla^\frkB_xNy$ hold, then $\delta\frkB_1=\delta\frkB_2=0$, where $\frkB_k(x,y)=\frkB(N^kx,y)$. Applying the result of Theorem {\rm\ref{thm:Hess2}}, we find that $(\frkB,N)$ is a regular structure.
\end{ex}}

  In the following, we characterize the algebraic structure
of a pseudo-Hessian-Nijenhuis structure on a pre-Lie algebra in
terms of compatible structures. At first, two invertible
compatible $\frks$-matrices can induce a pseudo-Hessian-Nijenhuis
structure on a pre-Lie algebra.

\begin{pro}\label{pro:smatrixphn}
Let $(\g,\cdot_\g)$ be a pre-Lie algebra and $r_1,r_2$ two compatible $\frks$-matrices. Suppose that $r_2$ is invertible.
Then $N=r_1^\sharp\circ (r_2^\sharp)^{-1}:\g\longrightarrow\g $ is a Nijenhuis operator on the pre-Lie algebra $(\g,\cdot_\g)$. Furthermore, if  $r_1$ is also invertible,
then $(\g,\cdot_\g,\frkB,N)$ is a pseudo-Hessian-Nijenhuis pre-Lie algebra, where $\frkB$ is defined by \eqref{eq:rB} associated to the $\frks$-matrix $r_1$.
\end{pro}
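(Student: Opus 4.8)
The plan is to reduce both assertions to results already in the excerpt. For the first one, recall that since $r_1,r_2$ are $\frks$-matrices, Proposition~\ref{pro:morphism} guarantees that $r_1^\sharp$ and $r_2^\sharp$ are $\huaO$-operators on $(\g,\cdot_\g)$ associated to the representation $(\g^*;\ad^*,-R^*)$, and by Example~\ref{ex:crs} the compatibility of $r_1$ and $r_2$ in the sense of Definition~\ref{defi:cs} is exactly the compatibility of $r_1^\sharp$ and $r_2^\sharp$ as $\huaO$-operators. As $r_2$ is invertible, $r_2^\sharp\colon\g^*\to\g$ is an isomorphism, so Proposition~\ref{pro:TTN} applies with $T_1=r_1^\sharp$, $T_2=r_2^\sharp$ and yields immediately that $N=r_1^\sharp\circ(r_2^\sharp)^{-1}$ is a Nijenhuis operator on $(\g,\cdot_\g)$. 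If $r_1$ is invertible as well, then $r_1^\sharp$ is an isomorphism and hence so is $N$.

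For the second assertion, I would first invoke Proposition~\ref{pro:Hessian1} for the invertible $\frks$-matrix $r_1$ to conclude that $\frkB$, defined by \eqref{eq:rB}, is a pseudo-Hessian structure on $(\g,\cdot_\g)$. It then remains to check the two compatibility conditions in the definition of a pseudo-Hessian-Nijenhuis structure, namely \eqref{eq:Hess1}, i.e.\ $\frkB(N(x),y)=\frkB(x,N(y))$, and $\dM^T\frkB_1=0$ where $\frkB_1(x,y)=\frkB(x,N(y))$. Both follow by unwinding the definitions, using that $r_1,r_2\in\Sym^2(\g)$ means the maps $r_i^\sharp$ (and their inverses) are self-adjoint for the canonical pairing between $\g$ and $\g^*$. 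Concretely, writing $N=r_1^\sharp\circ(r_2^\sharp)^{-1}$ and cancelling the $r_1^\sharp$ factors one computes $\frkB(N(x),y)=\langle y,(r_2^\sharp)^{-1}(x)\rangle=\langle x,(r_2^\sharp)^{-1}(y)\rangle=\frkB(x,N(y))$, which is \eqref{eq:Hess1}; moreover this same computation shows $\frkB_1(x,y)=\langle x,(r_2^\sharp)^{-1}(y)\rangle$, so $\frkB_1$ is precisely the pseudo-Hessian form \eqref{eq:rB} attached to the invertible $\frks$-matrix $r_2$. Hence $\dM^T\frkB_1=0$ by a second application of Proposition~\ref{pro:Hessian1}, and therefore $(\frkB,N)$ is a pseudo-Hessian-Nijenhuis structure, i.e.\ $(\g,\cdot_\g,\frkB,N)$ is a pseudo-Hessian-Nijenhuis pre-Lie algebra.

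The only genuine computation is the unwinding of $\frkB(N(x),y)$ and $\frkB(x,N(y))$ through $N=r_1^\sharp\circ(r_2^\sharp)^{-1}$, and the point to get right there is the pair of identities $\langle r_1^\sharp(\xi),\eta\rangle=\langle r_1^\sharp(\eta),\xi\rangle$ and the analogous one for $(r_2^\sharp)^{-1}$, which make the $r_1^\sharp$'s cancel and leave the pairing against $(r_2^\sharp)^{-1}$. I expect this bookkeeping between elements of $\Sym^2(\g)$ and their sharp maps --- rather than any structural difficulty --- to be the main thing to be careful with; everything else is a citation of Propositions~\ref{pro:morphism}, \ref{pro:Hessian1} and \ref{pro:TTN} together with Example~\ref{ex:crs}.
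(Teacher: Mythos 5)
Your proposal is correct and follows essentially the same route as the paper's own proof: Proposition~\ref{pro:morphism}, Example~\ref{ex:crs} and Proposition~\ref{pro:TTN} for the Nijenhuis claim, Proposition~\ref{pro:Hessian1} for the pseudo-Hessian structure, symmetry of $r_1,r_2$ for \eqref{eq:Hess1}, and the identification $\frkB_1(x,y)=\langle x,(r_2^\sharp)^{-1}(y)\rangle$ to get closedness from the $\frks$-matrix $r_2$. You are in fact slightly more careful than the paper in spelling out the cancellation of the $r_1^\sharp$ factors and in noting that invertibility of $r_1$ makes $N$ invertible, as the definition of a pseudo-Hessian-Nijenhuis structure requires.
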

\pf Since $r_1$ and $r_2$ are $\frks$-matrices, $r_1^\sharp$ and
$r_2^\sharp$ are $\huaO$-operators  associated to
the representation $(\g^*;{\rm ad}^*, -R^*)$. Then the first
conclusion follows from Proposition \ref{pro:TTN}. As for the
second conclusion, by Proposition \ref{pro:Hessian1}, if $r_1$ is
invertible, then $\frkB$ is a pseudo-Hessian structure. As both
$r_1$ and $r_2$ are symmetric, we have
$$\frkB(N(x),y)=\frkB(x,N(y)).$$
 Since $r_2$ is  an invertible $\frks$-matrix, $\frkB_1\in\Sym^2(\g^*)$ defined by $\frkB_1(x,y)=\frkB(x,N(y))=\langle x,(r_2^\sharp)^{-1}y\rangle$ is closed and therefore $(\g,\cdot_\g,\frkB,N)$ is a pseudo-Hessian-Nijenhuis pre-Lie algebra. \qed
\vspace{3mm}

Conversely, given a pseudo-Hessian-Nijenhuis structure on a pre-Lie algebra, we can find a pair of compatible $\frks$-matrices.
\begin{pro}\label{thm:pseudoHS}
Let $(\g,\cdot_\g,\frkB,N)$ be a pseudo-Hessian-Nijenhuis pre-Lie
algebra. Then there exists a pair of compatible $\frks$-matrices
$r_1$ and $r_2$ producing $(\frkB,N)$ as described in Proposition
\ref{pro:smatrixphn}.
\end{pro}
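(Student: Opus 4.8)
The plan is to invert the construction of Proposition~\ref{pro:smatrixphn}: the Nijenhuis operator $N$ together with the pseudo-Hessian form $\frkB$ should recover the two $\frks$-matrices. Write $\frkB^\sharp:\g\to\g^*$ for the isomorphism determined by $\langle\frkB^\sharp(x),y\rangle=\frkB(x,y)$, and define linear maps $\g^*\to\g$ by
\[
r_1^\sharp:=(\frkB^\sharp)^{-1},\qquad r_2^\sharp:=N^{-1}\circ r_1^\sharp .
\]
Since $\frkB$ is symmetric, the associated bilinear form $r_1$ lies in $\Sym^2(\g)$ and satisfies $\frkB(x,y)=\langle x,(r_1^\sharp)^{-1}(y)\rangle$, so that $\frkB$ is exactly the form \eqref{eq:rB} attached to $r_1$; using in addition the compatibility $\frkB(N(x),y)=\frkB(x,N(y))$, one checks that $r_2\in\Sym^2(\g)$ and that $\frkB_1(x,y)=\frkB(x,N(y))=\langle x,(r_2^\sharp)^{-1}(y)\rangle$. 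Both $r_1^\sharp$ and $r_2^\sharp$ are invertible, and $r_1^\sharp\circ(r_2^\sharp)^{-1}=N$ holds by construction.

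Next I would check that $r_1$ and $r_2$ are $\frks$-matrices. Substituting $\xi=\frkB^\sharp(x)$, $\eta=\frkB^\sharp(y)$, $\zeta=\frkB^\sharp(z)$ into \eqref{S-equation1} and using the symmetry of $\frkB$, the identity $\llbracket r_1,r_1\rrbracket=0$ becomes, after transport along $\frkB^\sharp$, precisely the cocycle condition $\dM^T\frkB=0$; since $\frkB$ is pseudo-Hessian this holds. (This is the converse of Proposition~\ref{pro:Hessian1}, which one verifies by the same direct substitution.) The identical computation applied to $r_2$ and the pseudo-Hessian form $\frkB_1$ shows that $\llbracket r_2,r_2\rrbracket=0$ is equivalent to $\dM^T\frkB_1=0$, which is part of the hypothesis that $(\frkB,N)$ is a pseudo-Hessian-Nijenhuis structure. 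Hence $r_1$ and $r_2$ are invertible $\frks$-matrices.

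It then remains to prove that $r_1$ and $r_2$ are compatible. Being $\frks$-matrices, $r_1^\sharp$ and $r_2^\sharp$ are invertible $\huaO$-operators on $(\g,\cdot_\g)$ associated to the representation $(\g^*;\ad^*,-R^*)$, and $r_1^\sharp\circ(r_2^\sharp)^{-1}=N$ is a Nijenhuis operator by hypothesis; by the corollary following Proposition~\ref{pro:TTN}, $r_1^\sharp$ and $r_2^\sharp$ are compatible $\huaO$-operators, whence $r_1$ and $r_2$ are compatible $\frks$-matrices by Example~\ref{ex:crs}. Thus $(r_1,r_2)$ is a pair of compatible $\frks$-matrices, with $r_2$ (in fact also $r_1$) invertible, $N=r_1^\sharp\circ(r_2^\sharp)^{-1}$, and $\frkB$ the form \eqref{eq:rB} of $r_1$, which is exactly the situation of Proposition~\ref{pro:smatrixphn}. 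The only real work is the bookkeeping with the various $\sharp$-maps and the observation that the $\frks$-matrix equations for $r_1$ and $r_2$ collapse to the cocycle conditions $\dM^T\frkB=\dM^T\frkB_1=0$; I do not expect a genuine obstacle beyond this.
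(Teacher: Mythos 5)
Your proposal is correct and follows essentially the same route as the paper: the same construction $r_1^\sharp=(\frkB^\natural)^{-1}$, $r_2^\sharp=N^{-1}\circ(\frkB^\natural)^{-1}$, the closedness of $\frkB$ and $\frkB_1$ to get the two $\frks$-matrices, and the Nijenhuis property of $N=r_1^\sharp\circ(r_2^\sharp)^{-1}$ (via Proposition~\ref{pro:NT} and its corollary) to get compatibility. You are merely more explicit than the paper about the converse direction of Proposition~\ref{pro:Hessian1} and the symmetry/invertibility bookkeeping for $r_2$, which the paper leaves implicit.
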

\pf Since $\frkB$ is nondegenerate,  the operator $\frkB^\natural:\g\longrightarrow\g^*$ defined by
$$\frkB^\natural (x)=\iota_x\frkB,\quad \forall x\in\g$$
is an isomorphism. Putting $r_1^\sharp=(\frkB^\natural)^{-1},\  r_2^\sharp=N^{-1}\circ (\frkB^\natural)^{-1}$. Since $\frkB$ and $\frkB_1$ are closed, $r_1$ and $r_2$ are $\frks$-matrices. Since $N$ is an invertible Nijenhuis operator, by Proposition \ref{pro:NT}, $r_1$ and $r_2$ are compatible.\qed\vspace{3mm}

 By Proposition  \ref{thm:pseudoHS} and Corollary
\ref{co:BN},
  a pseudo-Hessian-Nijenhuis pre-Lie
algebra gives rise to a pair of compatible L-dendriform algebras.

\begin{cor}
Let $(\g,\cdot_\g,\frkB,N)$ be a pseudo-Hessian-Nijenhuis pre-Lie
algebra. Then there exists a pair of compatible L-dendriform algebra
structures on $\g$ given by
\begin{equation}
\frak {B}(x\triangleright_1 y,z)= -\frak {B}(y,x\cdot_\g z-z\cdot_\g
x),\quad  \frak  {B}(x\triangleleft_1 y,z)= -\frak {B}(y,z\cdot_\g x),
\end{equation}
\begin{equation}\label{eq:Ldendriform2}
\frak {B}(x\triangleright_2 y,z)= -\frak {B}(N(y),x\cdot_\g
N^{-1}(z)-N^{-1}(z)\cdot_\g x),\quad  \frak  {B}(x\triangleleft_2 y,z)= -\frak
{B}(N(y),N^{-1}(z)\cdot_\g x).
\end{equation}

\end{cor}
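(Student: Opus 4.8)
The plan is to obtain this directly from Proposition~\ref{thm:pseudoHS} and Corollary~\ref{co:BN}, so that no fresh verification of the L-dendriform axioms is needed. First I would invoke Proposition~\ref{thm:pseudoHS} to produce the pair of compatible invertible $\frks$-matrices $r_1,r_2$ attached to $(\frkB,N)$, namely the ones with $r_1^\sharp=(\frkB^\natural)^{-1}$ and $r_2^\sharp=N^{-1}\circ(\frkB^\natural)^{-1}$, where $\frkB^\natural:\g\longrightarrow\g^*$, $\frkB^\natural(x)=\iota_x\frkB$, is the isomorphism induced by the nondegenerate $\frkB$. In particular $r_1^\sharp$ and $r_2^\sharp$ are invertible $\huaO$-operators associated to the representation $(\g^*;\ad^*,-R^*)$, and $r_2^\sharp=N^{-1}\circ r_1^\sharp$ with $N^{-1}$ again an invertible Nijenhuis operator on $(\g,\cdot_\g)$.

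Next I would apply Corollary~\ref{co:BN} with $T=r_1^\sharp$ and with $N^{-1}$ playing the role of the invertible Nijenhuis operator: since $N^{-1}\circ T=r_2^\sharp$ is also an $\huaO$-operator and $T$ is invertible, the corollary yields two compatible L-dendriform structures on $\g$, of the form $x\triangleright_1 y=r_1^\sharp\,\ad^*_x(r_1^\sharp)^{-1}(y)$, $x\triangleleft_1 y=r_1^\sharp\,R^*_x(r_1^\sharp)^{-1}(y)$ and $x\triangleright_2 y=r_2^\sharp\,\ad^*_x(r_2^\sharp)^{-1}(y)$, $x\triangleleft_2 y=r_2^\sharp\,R^*_x(r_2^\sharp)^{-1}(y)$ (the sign coming from $\mu=-R^*$). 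Compatibility is then automatic from the corollary.

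It then only remains to translate these into the bilinear-form expressions in the statement. Here I would use $\frkB(a,z)=\langle\frkB^\natural(a),z\rangle$ together with $\langle\ad^*_x\xi,w\rangle=-\langle\xi,x\cdot_\g w-w\cdot_\g x\rangle$ and $\langle R^*_x\xi,w\rangle=-\langle\xi,w\cdot_\g x\rangle$. Since $(r_1^\sharp)^{-1}=\frkB^\natural$, substituting immediately gives the first pair of formulas. For the second pair one also needs $(r_2^\sharp)^{-1}=\frkB^\natural\circ N$ and the self-adjointness identity $\frkB^\natural\circ N=N^*\circ\frkB^\natural$, which is exactly \eqref{eq:Hess1} rewritten via $\frkB^\natural$; pushing $\frkB^\natural\circ N^{-1}\circ(\frkB^\natural)^{-1}=(N^{-1})^*$ through the pairing moves $N^{-1}$ onto the $z$-slot and produces \eqref{eq:Ldendriform2}.

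I expect the only real obstacle to be bookkeeping rather than substance: one must be careful about which of $r_1^\sharp$ and $r_2^\sharp$ is the base operator $T$ and which is $N\circ T$ in Corollary~\ref{co:BN} (choosing $T=r_1^\sharp$ so that $\triangleright_1,\triangleleft_1$ match the ``plain'' $\frkB$-formulas), about the sign contributed by $\mu=-R^*$, and about the placement of $N$ versus $N^{-1}$ in the self-adjointness step. Once these are fixed, every remaining verification is a one-line computation with $\frkB^\natural$ and the adjoint of $N$ with respect to $\frkB$.
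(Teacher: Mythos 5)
Your proposal is correct and follows essentially the same route as the paper: invoke Proposition~\ref{thm:pseudoHS} to get the compatible invertible $\frks$-matrices $r_1^\sharp=(\frkB^\natural)^{-1}$, $r_2^\sharp=N^{-1}\circ(\frkB^\natural)^{-1}$, apply Corollary~\ref{co:BN} (with $N^{-1}$ as the connecting invertible Nijenhuis operator), and then unwind the resulting formulas through the pairing using $\langle\ad^*_x\xi,w\rangle=-\langle\xi,[x,w]^c\rangle$, $\langle R^*_x\xi,w\rangle=-\langle\xi,w\cdot_\g x\rangle$ and the $\frkB$-self-adjointness of $N$. Your bookkeeping of which operator is $T$ and which is $N\circ T$ is in fact slightly more careful than the paper's own write-up.
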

\pf Since $(\g,\cdot_\g,\frkB,N)$ is a pseudo-Hessian-Nijenhuis
pre-Lie algebra, by Proposition \ref{thm:pseudoHS},
$r_1^\sharp=(\frkB^\natural)^{-1}$ and $r_2^\sharp=N^{-1} \circ
(\frkB^\natural)^{-1}$ are invertible $\frks$-matrices and
compatible. Thus $r_1^\sharp$ and $r_2^\sharp$ are also compatible
$\huaO$-operators on the pre-Lie algebra $(\g,\cdot_\g)$
associated to the representation $(\ad^*,-R^*)$.  By  Corollary \ref{co:BN}, there is a pair of
compatible L-dendriform algebras given by
  \begin{eqnarray*}
x \triangleright_1 y &=& r_1^\sharp(\ad^*_x(r_1^\sharp)^{-1}(y)),\quad x \triangleleft_1 y =
r_1^\sharp(R^*_x(r_1^\sharp)^{-1}(y)), \\
x \triangleright_2 y &=& r_2^\sharp(\ad^*_x(r_2^\sharp)^{-1}(y)),\quad x \triangleleft_2 y =
r_2^\sharp(R^*_x(r_2^\sharp)^{-1}(y)),\quad \forall x, y\in \g.
 \end{eqnarray*}
 We only verify  \eqref{eq:Ldendriform2}. The other one can be proved similarly. In fact, it follows from
 \begin{eqnarray*}
 \frkB(x \triangleright_2 y,z)&=&\langle (r_1^\sharp)^{-1}( r_2^\sharp(\ad^*_x(r_2^\sharp)^{-1}(y)),z\rangle=\langle \ad^*_x(r_2^\sharp)^{-1}(y), r_2^\sharp((r_1^\sharp)^{-1}(z))\rangle\\
 &=&\langle \ad^*_x(r_2^\sharp)^{-1}(y),N (z)\rangle=-\langle (r_2^\sharp)^{-1}(y),[x,N^{-1}(z)]^c\rangle\\
 &=&-\langle (r_1^\sharp)^{-1}(r_1^\sharp ((r_2^\sharp)^{-1}(y))),[x,N^{-1} (z)]^c\rangle=-\langle(r_1^\sharp)^{-1}(N(y)),[x,N^{-1}(z)]^c\rangle\\
 &=&-\frak {B}(N(y),x\cdot_\g
N^{-1}(z)-N^{-1} (z)\cdot_\g x),
 \end{eqnarray*}
and
\begin{eqnarray*}
\frak{B}(x\triangleleft_2 y,z)&=&\langle(r_1^\sharp)^{-1}( r_2^\sharp(R^*_x(r_2^\sharp)^{-1}(y)),z\rangle=\langle R^*_x(r_2^\sharp)^{-1}(y), r_2^\sharp((r_1^\sharp)^{-1}(z))\rangle\\
&=&\langle R^*_x(r_2^\sharp)^{-1}(y), N^{-1}(z)\rangle=-\langle(r_2^\sharp)^{-1}(y),N^{-1}(z)\cdot_\g x \rangle\\
&=&-\langle(r_1^\sharp)^{-1}(r_1^\sharp ((r_2^\sharp)^{-1}(y))),N^{-1}(z)\cdot_\g x \rangle=-\langle(r_1^\sharp)^{-1}(N(y)),N^{-1}(z)\cdot_\g x\rangle\\
&=&-\frak{B}(N(y),N^{-1}(z)\cdot_\g x).
 \end{eqnarray*}
 We finish the proof.
\qed\vspace{2mm}

By Theorem \ref{thm:Hess1}, we already know that a
pseudo-Hessian-Nijenhuis structure $(\g,\cdot_\g,\frkB,N)$
produces a sequence of pseudo-Hessian-Nijenhuis structures $(
\frkB,N^n).$ By Proposition \ref{pro:smatrixphn}, any compatible
invertible $\frks$-matrices $r_1$ and $r_2$ produce a sequence of
$\frks$-matrices $s_n\in\Sym^2(\g)$ satisfying
$$(s_n)^\sharp=(r_2^\sharp\circ(r_1^\sharp)^{-1})^n\circ r_1^\sharp.$$
In fact, since $N=r_1^\sharp \circ (r_2^\sharp)^{-1}$, we have
\begin{eqnarray*}
\frkB(x,N^n(y))&=&\frkB(x,(r_1^\sharp \circ
(r_2^\sharp)^{-1})^n(y))=\langle x,(r_1^\sharp)^{-1}\circ
(r_1^\sharp \circ (r_2^\sharp)^{-1})^n(y)\rangle.
\end{eqnarray*}
Since $(\g,\cdot_\g,\frkB,N^n)$ is a pseudo-Hessian-Nijenhuis
pre-Lie algebra, $\frkB_n\in\Sym^2(\g^*)$ defined by
$\frkB_n(x,y)=\frkB(x,N^n(y))$ is closed. Thus by Proposition
\ref{pro:Hessian1}, the inverse of
$(r_1^\sharp)^{-1}\circ(r_1^\sharp\circ (r_2^\sharp)^{-1})^n$ is
an $\frks$-matrix, i.e. $s_n$ is an $\frks$-matrix. In particular,
we have $s_0=r_1$ and $s_1=r_2.$

\begin{rmk}
  All $\frks$-matrices $s_n$ given   above are compatible with $r_1$ and also with each other. In fact, by the fact that $(\g,\cdot_\g,\frkB,N^n)$ is a pseudo-Hessian-Nijenhuis structure and the proof of Proposition \ref{thm:pseudoHS},   $(\frkB^\natural)^{-1}$ and $(N^{-1})^n\circ(\frkB^\natural)^{-1}$ are compatible, which means that $r_1$ and $s_n$ are compatible. Similarly, by the fact that $(\g,\cdot_\g,\frkB_n,N^m)$ is a pseudo-Hessian-Nijenhuis pre-Lie algebra, $s_n$ and  $s_m$ are also compatible.
\end{rmk}

At the end of this section, we consider the pre-Lie algebra
structure on the dual space $\g^*$ associated to a pair of
compatible $\frks$-matrices on a pre-Lie algebra $(\g,\cdot_\g)$.

\begin{pro}\label{pro:dual}
Let $(\g,\cdot_\g)$ be a pre-Lie algebra and $r_1,r_2$ two compatible $\frks$-matrices in which $r_2$ is invertible. Then $N^*=(r^\sharp_2)^{-1} \circ r_1^\sharp$ is a Nijenhuis operator on the pre-Lie algebra $(\g^*,\cdot_{r_2})$, where the multiplication $\cdot_{r_2}$ is given by \eqref{eq:pre-bia}.
 Furthermore, the corresponding trivial deformation of $(\g^*,\cdot_{r_2})$ generated by $N^*$ is given by
\begin{equation}\label{eq:pre-bia2}
\xi\cdot_t \eta=\xi\cdot_{r_2} \eta+t\xi\cdot_{r_1}\eta,\quad \forall~\xi,\eta\in\g^*.
\end{equation}
 \end{pro}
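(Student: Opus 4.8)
The plan is to obtain both assertions by transporting, along an isomorphism, the fact (established in Proposition~\ref{pro:smatrixphn}) that $N:=r_1^\sharp\circ(r_2^\sharp)^{-1}$ is a Nijenhuis operator on $(\g,\cdot_\g)$. First I would record the key structural fact: since $r_2$ is an $\frks$-matrix, Proposition~\ref{pro:morphism} gives $r_2^\sharp(\xi\cdot_{r_2}\eta)=r_2^\sharp(\xi)\cdot_\g r_2^\sharp(\eta)$, so $\phi:=r_2^\sharp$ is a morphism of pre-Lie algebras $(\g^*,\cdot_{r_2})\to(\g,\cdot_\g)$, and it is an isomorphism because $r_2$ is invertible. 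Moreover $N\circ\phi=r_1^\sharp\circ(r_2^\sharp)^{-1}\circ r_2^\sharp=r_1^\sharp$, so
\[
N^*=(r_2^\sharp)^{-1}\circ r_1^\sharp=\phi^{-1}\circ N\circ\phi ;
\]
that is, $N^*$ is the conjugate of $N$ by $\phi$.

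For the first assertion I would use that the Nijenhuis condition \eqref{eq:nij-opera} is invariant under conjugation by a pre-Lie algebra isomorphism: applying $\phi$ to the identity \eqref{eq:nij-opera} written for $N^*=\phi^{-1}N\phi$ on $(\g^*,\cdot_{r_2})$ and using $\phi\circ N^*=N\circ\phi$ together with the morphism property $\phi(\xi\cdot_{r_2}\eta)=\phi(\xi)\cdot_\g\phi(\eta)$, each term turns into the corresponding term of \eqref{eq:nij-opera} for $N$ on $(\g,\cdot_\g)$ evaluated at $\phi(\xi),\phi(\eta)$. Since $\phi$ is bijective this is an equivalence, and as $N$ is Nijenhuis on $(\g,\cdot_\g)$ we conclude that $N^*$ is a Nijenhuis operator on $(\g^*,\cdot_{r_2})$.

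For the deformation statement, Theorem~\ref{thm:trivial def} tells us the trivial deformation of $(\g^*,\cdot_{r_2})$ generated by $N^*$ is $\xi\cdot_t\eta=\xi\cdot_{r_2}\eta+t\,(\dr N^*)(\xi,\eta)$, where $\dr$ is the coboundary operator of the regular representation of the pre-Lie algebra $(\g^*,\cdot_{r_2})$; by \eqref{eq:LANij} applied to $(\g^*,\cdot_{r_2})$, $(\dr N^*)(\xi,\eta)=N^*(\xi)\cdot_{r_2}\eta+\xi\cdot_{r_2}N^*(\eta)-N^*(\xi\cdot_{r_2}\eta)$. Hence it remains to prove this equals $\xi\cdot_{r_1}\eta$, and for this I would again apply $\phi$: using $\phi\circ N^*=N\circ\phi$, the morphism property of $\phi$, and $N\circ r_2^\sharp=r_1^\sharp$, one finds $\phi\big((\dr N^*)(\xi,\eta)\big)=r_1^\sharp(\xi)\cdot_\g r_2^\sharp(\eta)+r_2^\sharp(\xi)\cdot_\g r_1^\sharp(\eta)-r_1^\sharp(\xi\cdot_{r_2}\eta)$, so that the desired identity becomes
\[
r_1^\sharp(\xi)\cdot_\g r_2^\sharp(\eta)+r_2^\sharp(\xi)\cdot_\g r_1^\sharp(\eta)=r_2^\sharp(\xi\cdot_{r_1}\eta)+r_1^\sharp(\xi\cdot_{r_2}\eta).
\]
Substituting the defining formula \eqref{eq:pre-bia} for $\cdot_{r_1}$ and $\cdot_{r_2}$, this is precisely the compatibility condition \eqref{eq:CN1} for the $\huaO$-operators $T_1=r_1^\sharp$, $T_2=r_2^\sharp$ associated to the representation $(\g^*;\ad^*,-R^*)$, which holds because $r_1,r_2$ are compatible $\frks$-matrices (Example~\ref{ex:crs}). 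This gives $\xi\cdot_t\eta=\xi\cdot_{r_2}\eta+t\,\xi\cdot_{r_1}\eta$, i.e. \eqref{eq:pre-bia2}.

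The routine content is just the term-by-term transport computations; the one place to be attentive is bookkeeping — keeping straight that $\dr$ refers here to the regular representation of $(\g^*,\cdot_{r_2})$ rather than of $\g$, and recognizing the transported expression for $\dr N^*$ as exactly the compatibility identity \eqref{eq:CN1}. Once the isomorphism $\phi=r_2^\sharp$ is in hand everything is formal; there is no genuine difficulty.
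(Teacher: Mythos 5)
Your argument is correct, and it reaches the same two key identities as the paper but organizes them differently. The paper's proof is self-contained: it applies Proposition~\ref{pro:morphism} to the $\frks$-matrix $r_1+tr_2$ to extract three identities, from which it reads off both $N^*(\xi\cdot_{r_1}\eta)=N^*(\xi)\cdot_{r_2}N^*(\eta)$ and $\xi\cdot_{r_1}\eta=N^*(\xi)\cdot_{r_2}\eta+\xi\cdot_{r_2}N^*(\eta)-N^*(\xi\cdot_{r_2}\eta)$, and then combines these two to verify the Nijenhuis condition for $N^*$ directly, without ever invoking the fact that $N=r_1^\sharp\circ(r_2^\sharp)^{-1}$ is Nijenhuis on $\g$. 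You instead obtain the Nijenhuis property by transport of structure: $N^*=\phi^{-1}\circ N\circ\phi$ with $\phi=r_2^\sharp$ an isomorphism $(\g^*,\cdot_{r_2})\to(\g,\cdot_\g)$, and $N$ already known to be Nijenhuis by Proposition~\ref{pro:smatrixphn}. That is a cleaner conceptual route which reuses earlier results and makes transparent why $N^*$ works; the paper's route buys the explicit intermediate identity $N^*(\xi\cdot_{r_1}\eta)=N^*(\xi)\cdot_{r_2}N^*(\eta)$ (morphism property of $N^*$ from $\cdot_{r_1}$ to $\cdot_{r_2}$) as a by-product. For the deformation claim your computation is essentially the paper's third identity in disguise: the equality $r_1^\sharp(\xi)\cdot_\g r_2^\sharp(\eta)+r_2^\sharp(\xi)\cdot_\g r_1^\sharp(\eta)=r_2^\sharp(\xi\cdot_{r_1}\eta)+r_1^\sharp(\xi\cdot_{r_2}\eta)$ is exactly the cross term the paper extracts from Proposition~\ref{pro:morphism} applied to $r_1+tr_2$, and you correctly recognize it as the compatibility condition \eqref{eq:CN1} via Example~\ref{ex:crs}. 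Both citations are legitimate, so your proof stands as written.
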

\pf  Since $r_1$ and $r_2$ are compatible, $r_1+tr_2$ is an $\frks$-matrix for arbitrary $t$. By Proposition \ref{pro:morphism}, we have
$$(r_1^\sharp+tr_2^\sharp)(\xi\cdot_{r_1+tr_2}\eta)=(r_1^\sharp+tr_2^\sharp)(\xi)\cdot_\g (r_1^\sharp+tr_2^\sharp)(\eta),$$
which means that
\begin{eqnarray*}
r_1^\sharp(\xi\cdot_{r_1}\eta)&=&r_1^\sharp(\xi)\cdot_\g r_1^\sharp(\eta),\\
r_2^\sharp(\xi\cdot_{r_2}\eta)&=&r_2^\sharp(\xi)\cdot_\g r_2^\sharp(\eta),\\
r_2^\sharp(\xi\cdot_{r_1}\eta)+r_1^\sharp(\xi\cdot_{r_2}\eta)&=&r_1^\sharp(\xi)\cdot_\g r_2^\sharp(\eta)+r_2^\sharp(\xi)\cdot_\g r_1^\sharp(\eta).
\end{eqnarray*}
Now applying $(r_2^\sharp)^{-1}$ to both sides of above equalities, we obtain from the first two equalities
\begin{equation}\label{eq:pre-bia3}
N^*(\xi\cdot_{r_1}\eta)=N^*(\xi)\cdot_{r_2}N^*(\eta).
\end{equation}
By the second and the third equalities, we get
\begin{eqnarray}
\xi\cdot_{r_1}\eta+N^*(\xi\cdot_{r_2}\eta)&=&(r_2^\sharp)^{-1}(r_1^\sharp(\xi)\cdot_\g (r_2^\sharp)(\eta))+(r_2^\sharp)^{-1}(r^\sharp_2(\xi)\cdot_\g r_1^\sharp(\eta))\nonumber\\
&=&(r_2^\sharp)^{-1} r_1^\sharp(\xi)\cdot_{ r_2}\eta+\xi\cdot_{ r_2} (r_2^\sharp)^{-1} r_1^\sharp(\eta)\nonumber\\
&=&N^*(\xi)\cdot_{ r_2}\eta+\xi\cdot_{ r_2} N^*(\eta)\label{eq:pre-bia4}.
\end{eqnarray}
By \eqref{eq:pre-bia3} and \eqref{eq:pre-bia4}, we obtain that $N^*$ is a Nijenhuis operator on the pre-Lie algebra $(\g^*,\cdot_{r_2})$.

It is straightforward to see that the pre-Lie algebra structure given by \eqref{eq:pre-bia} associated to the $\frks$-matrix $r_2+tr_1$ is exactly the one given by \eqref{eq:pre-bia2}. Thus, \eqref{eq:pre-bia2} is a deformation of the pre-Lie algebra $(\g^*,\cdot_{r_2})$. By \eqref{eq:pre-bia4}, this deformation is generated by $N^*$. The proof is finished.\qed

\begin{cor}
Let $(\g,\cdot_\g,\frkB,N)$ be a pseudo-Hessian-Nijenhuis pre-Lie
algebra. Then the conjugate $N^*:\g^*\longrightarrow \g^*$ is a Nijenhuis operator on the pre-Lie
algebra $(\g^*,\cdot_{\g^*})$, where the multiplication
$\cdot_{\g^*}$ is given by
$$\langle \xi\cdot_{\g^*}\eta, x\rangle=\frkB(r^\sharp(
\xi)\cdot_\g r^\sharp(\eta), x),\quad \forall \xi,\eta\in \g^*, x\in \g,$$
where $r^\sharp:\g^*\rightarrow \g$ is defined by \eqref{eq:rB}.
\end{cor}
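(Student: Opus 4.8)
The plan is to recognize this corollary as an instance of the principle ``a Nijenhuis operator is carried to a Nijenhuis operator by an isomorphism of pre-Lie algebras'': I will show that $r^\sharp$ is an isomorphism between $(\g^*,\cdot_{\g^*})$ and $(\g,\cdot_\g)$ and that, under this isomorphism, $N^*$ corresponds to $N$; the statement then follows immediately from the observation (already made in this section for polynomials of $N$) that conjugating a Nijenhuis operator by a pre-Lie algebra isomorphism again produces a Nijenhuis operator. Recall first that, since $\frkB$ is nondegenerate, the map $\frkB^\natural:\g\to\g^*$, $\frkB^\natural(x)=\iota_x\frkB$, is an isomorphism, and by \eqref{eq:rB} we have $r^\sharp=(\frkB^\natural)^{-1}$.

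First I would unwind the definition of $\cdot_{\g^*}$. Using the symmetry of $\frkB$ together with \eqref{eq:rB}, the relation $\langle\xi\cdot_{\g^*}\eta,x\rangle=\frkB(r^\sharp(\xi)\cdot_\g r^\sharp(\eta),x)$ rewrites as
$$\xi\cdot_{\g^*}\eta=(r^\sharp)^{-1}\big(r^\sharp(\xi)\cdot_\g r^\sharp(\eta)\big),\qquad\forall\,\xi,\eta\in\g^*,$$
so that $r^\sharp:(\g^*,\cdot_{\g^*})\to(\g,\cdot_\g)$ is an isomorphism of pre-Lie algebras (in particular $(\g^*,\cdot_{\g^*})$ is a pre-Lie algebra). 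Equivalently, $\cdot_{\g^*}$ is the multiplication $\cdot_{r}$ of \eqref{eq:pre-bia} attached to the $\frks$-matrix $r$ with $r^\sharp=(\frkB^\natural)^{-1}$ (an $\frks$-matrix because $\frkB$ is pseudo-Hessian, as noted in the proof of Proposition \ref{thm:pseudoHS}), and then the isomorphism property is exactly Proposition \ref{pro:morphism}.

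Next I would identify the conjugate operator $N^*$. Using the compatibility relation $\frkB(N(x),y)=\frkB(x,N(y))$ of the pseudo-Hessian-Nijenhuis structure (that is, \eqref{eq:Hess1}) together with the symmetry of $\frkB$, a short computation with the duality pairing gives
$$N^*=(r^\sharp)^{-1}\circ N\circ r^\sharp.$$
Since $N$ is a Nijenhuis operator on $(\g,\cdot_\g)$, conjugating it by the pre-Lie isomorphism $r^\sharp$ yields that $N^*$ is a Nijenhuis operator on $(\g^*,\cdot_{\g^*})$, which is the claim.

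I do not expect a genuine obstacle here: the argument is entirely bookkeeping, and the only place needing care is getting the two displayed identities exactly right — keeping straight the symmetry of $\frkB$, the convention \eqref{eq:rB} defining $r^\sharp$, and the self-adjointness $\frkB(N(x),y)=\frkB(x,N(y))$. As an alternative route one may invoke Proposition \ref{pro:dual}: applying it to the compatible and (here) invertible pair $r_1,r_2$ furnished by Proposition \ref{thm:pseudoHS}, but with the roles of $r_1$ and $r_2$ interchanged, shows that $(N^*)^{-1}$ is a Nijenhuis operator on $(\g^*,\cdot_{\g^*})$; since $N$, hence $N^*$, is invertible, the earlier remark that the inverse of an invertible Nijenhuis operator is again a Nijenhuis operator then finishes the argument.
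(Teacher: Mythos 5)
Your proposal is correct, and your primary route is genuinely different from the paper's. The paper's entire proof is the citation ``It follows from Propositions \ref{thm:pseudoHS} and \ref{pro:dual}'': one extracts from Proposition \ref{thm:pseudoHS} the compatible invertible $\frks$-matrices $r_1^\sharp=(\frkB^\natural)^{-1}$ and $r_2^\sharp=N^{-1}\circ(\frkB^\natural)^{-1}$ and then feeds them into Proposition \ref{pro:dual} --- and, as you correctly observe, to land on the multiplication $\cdot_{\g^*}=\cdot_{r_1}$ rather than $\cdot_{r_2}$ one must apply Proposition \ref{pro:dual} with the roles of $r_1$ and $r_2$ interchanged, obtaining first that $(N^*)^{-1}$ is Nijenhuis and then inverting via the Laurent-polynomial theorem. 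Your ``alternative route'' is therefore essentially the paper's proof, spelled out more carefully than the paper bothers to. Your main route bypasses the $\frks$-matrix machinery entirely: since $\frkB$ is symmetric, $\xi\cdot_{\g^*}\eta=(r^\sharp)^{-1}\bigl(r^\sharp(\xi)\cdot_\g r^\sharp(\eta)\bigr)$, so $r^\sharp$ is a pre-Lie isomorphism, and the self-adjointness $\frkB(N(x),y)=\frkB(x,N(y))$ gives $N^*=(r^\sharp)^{-1}\circ N\circ r^\sharp$; the conclusion is then conjugation-invariance of the Nijenhuis condition. This is more elementary and self-contained, and it makes transparent \emph{why} the statement is true (it is a transport of structure), whereas the paper's route records the extra information that $\cdot_{\g^*}$ and the deformed product fit into the compatible-$\frks$-matrix/deformation picture of Proposition \ref{pro:dual}. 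One small caveat: the fact that a pre-Lie isomorphism carries Nijenhuis operators to Nijenhuis operators is \emph{not} actually stated in the paper --- the result you allude to parenthetically concerns polynomials $P(N)$, which is a different statement --- so you should include its (one-line) verification from \eqref{eq:nij-opera}; it is immediate, so this is a presentational gap rather than a mathematical one.
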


\pf It follows form Propositions \ref{thm:pseudoHS} and
\ref{pro:dual}. \qed

\section{Para-complex quadratic pre-Lie
algebras and para-complex pseudo-Hessian pre-Lie algebras}

  In this section, we  study the role of Nijenhuis
operators on a pre-Lie algebra in another kind of geometric
structures. Explicitly, we introduce the notion of a para-complex
structure on a pre-Lie algebra and show that a para-complex
structure gives rise to a pair of transversal pre-Lie subalgebras.
In particular, we introduce the notions of a paracomplex quadratic
pre-Lie algebra and a paracomplex pseudo-Hessian pre-Lie algebra.
For the former,  we show that there is a one-to-one correspondence
between para-complex quadratic pre-Lie algebras and
para-K\"{a}hler Lie algebras.

\begin{defi}
     Let $(\g,\cdot_\g)$ be a pre-Lie algebra and $N$ a Nijenhuis operator on $(\g,\cdot_\g)$. If $N^2={\Id}$ and $\dim \ker(N+{\Id})=\dim \ker(N-{\Id})$,  then  we call $N$ a {\bf para-complex structure} on the pre-Lie algebra $\g$.
   \end{defi}

   The Nijenhuis condition is exactly the integrability condition.    Let $(\g,\cdot_\g ,N)$ be a  para-complex   pre-Lie algebra. Set
$$\g^+=\{x\in \g|N(x)=x\},\;\;\g^{-}=\{x\in\g|N(x)=-x\}.$$
Then $\dim(\g^+)=\dim(\g^-)$ and $\g=\g^+\oplus\g^-$ as the direct sum of vector spaces.
   \begin{pro}\label{pro:subalgebra}
With the above notations, both $\g^+$ and $\g^-$ are pre-Lie subalgebras of the pre-Lie algebra $(\g,\cdot_\g)$.
   \end{pro}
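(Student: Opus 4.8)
The plan is to verify directly that each eigenspace $\g^+$ and $\g^-$ of $N$ is closed under the multiplication $\cdot_\g$. Since $N^2 = \Id$ already gives the vector space decomposition $\g = \g^+ \oplus \g^-$ stated just before the proposition, and each summand is a linear subspace, closure under $\cdot_\g$ is all that remains to be checked. The only input needed is the Nijenhuis identity \eqref{eq:nij-opera} together with the relation $N^2 = \Id$ (and the fact that $2$ is invertible in $\K$).

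First I would take $x, y \in \g^+$, so that $N(x) = x$ and $N(y) = y$, and substitute into \eqref{eq:nij-opera}. The left-hand side becomes $x \cdot_\g y$, while the right-hand side becomes $N\big(2\,(x\cdot_\g y) - N(x\cdot_\g y)\big) = 2N(x\cdot_\g y) - N^2(x\cdot_\g y) = 2N(x\cdot_\g y) - x\cdot_\g y$. Comparing the two sides and cancelling the factor $2$ gives $N(x\cdot_\g y) = x\cdot_\g y$, i.e. $x\cdot_\g y \in \g^+$. Next I would run the same computation for $x, y \in \g^-$, where now $N(x) = -x$ and $N(y) = -y$; identity \eqref{eq:nij-opera} then reads $x\cdot_\g y = N\big(-2\,(x\cdot_\g y) - N(x\cdot_\g y)\big) = -2N(x\cdot_\g y) - x\cdot_\g y$, so $N(x\cdot_\g y) = -(x\cdot_\g y)$ and $x\cdot_\g y \in \g^-$. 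Together these show that both $\g^+$ and $\g^-$ are pre-Lie subalgebras of $(\g,\cdot_\g)$.

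I do not expect any genuine obstacle here: the statement follows from a one-line application of the Nijenhuis identity to pairs of eigenvectors. The only point worth flagging is that the dimension-balancing clause $\dim\ker(N+\Id) = \dim\ker(N-\Id)$ in the definition of a para-complex structure plays no role in this argument — only the eigenspace decomposition coming from $N^2 = \Id$ is used — and that cancelling the factor $2$ relies on $\mathrm{char}\,\K = 0$, which is assumed throughout the paper.
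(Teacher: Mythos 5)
Your proof is correct and follows essentially the same route as the paper's: substitute eigenvectors of $N$ into the Nijenhuis identity \eqref{eq:nij-opera}, use $N^2=\Id$, and cancel the factor $2$ to conclude that $x\cdot_\g y$ stays in the same eigenspace. The paper merely rearranges the identity to $N(x\cdot_\g y)=N(x)\cdot_\g y+x\cdot_\g N(y)-N(N(x)\cdot_\g N(y))$ before substituting and treats $\g^-$ by symmetry, so the two arguments differ only in presentation.
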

   \pf For all $x,y\in \g^+$, we have
$$N( x\cdot_\g y )=N(x)\cdot_\g y+x\cdot_\g N(y)-N(N(x)\cdot_\g N(y))= x\cdot_\g y+x\cdot_\g y-x\cdot_\g y=x\cdot_\g y,$$
which implies that $x\cdot_\g y\in \g^+$. Therefore $\g^+$ is a pre-Lie subalgebra. Similarly,
$\g^-$ is a pre-Lie subalgebra.\qed

\subsection{Para-complex quadratic pre-Lie algebras}

Let  $(\g,[\cdot,\cdot]_\g)$ be a Lie algebra. Recall that $\omega\in\wedge^2\g^*$ is a 2-cocycle on $\g$ if
\begin{equation}\label{eq:Lie2-form}
\omega([x,y]_\g,z)+\omega([z,x]_\g,y)+\omega([y,z]_\g,x)=0,\quad \forall~x,y,z\in\g.
\end{equation}

A {\bf symplectic  Lie algebra}, which we denote by $(\g,[\cdot,\cdot]_\g,\omega)$, is a Lie algebra  $(\g,[\cdot,\cdot]_\g)$ together with a nondegenerate
  $2$-cocycle $\omega\in\wedge^2\g^*$ on $\g$. There is a close relationship between symplectic Lie algebras and quadratic pre-Lie algebras. A {\bf quadratic pre-Lie algebra} is a pre-Lie algebra $(\g,\cdot_\g)$ equipped with a skew-symmetric nondegenerate bilinear form $\omega\in\wedge^2\g^*$, which is invariant in the sense that
  \begin{equation}
  \omega(x\cdot_\g y,z)+\omega(y,[x,z]^c)=0,\quad \forall ~ x,y,z\in\g.
  \end{equation}
Due to the following important result,  quadratic pre-Lie algebras can be viewed as the underlying algebraic structures of symplectic Lie algebras.

\begin{thm}{\rm(\cite{Chu})}\label{thm:sp}
Let $(\g,[\cdot,\cdot]_\g,\omega)$ be a symplectic Lie algebra.
Then there exists a pre-Lie algebra structure ``$\cdot_\g$'' on
$\g$ given by
\begin{equation}\label{eq:LietoLSA}
\omega(x\cdot_\g y,z)=-\omega(y,[x,z]_\g),\quad \forall ~
x,y,z\in\g,
\end{equation}
such that the sub-adjacent Lie algebra is exactly
$(\g,[\cdot,\cdot]_\g)$ itself. Furthermore,
$(\g,\cdot_\g,\omega)$  is a quadratic pre-Lie algebra.
\end{thm}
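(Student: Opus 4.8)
The plan is to build the product $\cdot_\g$ directly out of the nondegeneracy of $\omega$ and then verify, in order, that its commutator recovers $[\cdot,\cdot]_\g$, that it satisfies the pre-Lie axiom, and that $\omega$ is invariant. Since $\omega$ is nondegenerate, for each fixed $x,y\in\g$ the linear functional $z\mapsto-\omega(y,[x,z]_\g)$ is represented by a unique vector of $\g$, which I take as the definition of $x\cdot_\g y$; this makes $\cdot_\g$ bilinear and makes \eqref{eq:LietoLSA} hold by construction, so only the remaining three properties need checking.

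Next I would compute the commutator of $\cdot_\g$. Using \eqref{eq:LietoLSA} and the skew-symmetry of $\omega$,
$$\omega(x\cdot_\g y-y\cdot_\g x,z)=-\omega(y,[x,z]_\g)+\omega(x,[y,z]_\g)=\omega([x,z]_\g,y)+\omega(x,[y,z]_\g),$$
and the $2$-cocycle condition \eqref{eq:Lie2-form} rewrites the right-hand side as $\omega([x,y]_\g,z)$. Nondegeneracy of $\omega$ then gives $x\cdot_\g y-y\cdot_\g x=[x,y]_\g$, so the sub-adjacent Lie algebra $(\g,[\cdot,\cdot]^c)$ equals $(\g,[\cdot,\cdot]_\g)$. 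Granting this, the invariance identity $\omega(x\cdot_\g y,z)+\omega(y,[x,z]^c)=0$ is just \eqref{eq:LietoLSA} again, so the final clause of the theorem is automatic once $\cdot_\g$ is shown to be pre-Lie.

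The remaining and main step is to check that the associator $(x\cdot_\g y)\cdot_\g z-x\cdot_\g(y\cdot_\g z)$ is symmetric in $x$ and $y$. I would pair it with an arbitrary $w\in\g$ and apply \eqref{eq:LietoLSA} repeatedly to push everything through $[\cdot,\cdot]_\g$ and $\omega$; for instance $\omega((x\cdot_\g y)\cdot_\g z,w)=-\omega(z,[x\cdot_\g y,w]_\g)$ and $\omega(x\cdot_\g(y\cdot_\g z),w)=\omega(z,[y,[x,w]_\g]_\g)$. Using the already-established relation $x\cdot_\g y-y\cdot_\g x=[x,y]_\g$ to replace $[x\cdot_\g y,w]_\g$ by $[[x,y]_\g,w]_\g+[y\cdot_\g x,w]_\g$, the difference between the associator and its $x\leftrightarrow y$ transpose (paired with $w$) collapses to $-\omega(z,[[x,y]_\g,w]_\g)-\omega(z,[y,[x,w]_\g]_\g)+\omega(z,[x,[y,w]_\g]_\g)$, which vanishes by the Jacobi identity of $\g$. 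Since $\omega$ is nondegenerate and $z,w$ are arbitrary, the pre-Lie identity follows, and with it the quadratic pre-Lie structure. The only genuine obstacle is keeping the bookkeeping of this last computation straight — arranging it so that the cocycle condition is invoked exactly once (in the commutator step) and the Jacobi identity exactly once (at the end); there is no conceptual difficulty beyond that.
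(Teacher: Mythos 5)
Your proof is correct: the construction of $\cdot_\g$ via nondegeneracy of $\omega$, the commutator computation using the $2$-cocycle identity once, and the associator computation reducing to the Jacobi identity all check out (I verified the sign bookkeeping in each step). The paper states this theorem as a citation to \cite{Chu} without proof, and your argument is the standard one for that result, so there is nothing to compare beyond noting that your write-up fills the gap correctly.
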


\begin{defi}
Let $(\g,[\cdot,\cdot]_\g,\omega)$ be a symplectic Lie algebra and $N$  a Nijenhuis operator on the Lie algebra $(\g,[\cdot,\cdot]_\g)$. Then $(\g,[\cdot,\cdot]_\g,\omega,N)$ is called a {\bf para-K\"{a}hler Lie algebra} if $N^2={\Id}$, $\dim \ker(N+{\Id})=\dim \ker(N-{\Id})$ and the following equality holds:
\begin{equation}\label{eq:SN}
 \omega(N(x),y)=-\omega(x,N(y)),\quad \forall~x,y\in \g.
\end{equation}
\end{defi}

Note that a Nijenhuis operator  on a Lie algebra
$(\g,[\cdot,\cdot]_\g)$ satisfies $N^2={\Id}$ and $\dim
\ker(N+{\Id})=\dim \ker(N-{\Id})$ is called a {\bf para-complex
structure} on
$(\g,[\cdot,\cdot]_\g)$.

   Now we give the notion of a para-complex quadratic pre-Lie algebra.

   \begin{defi}
     Let $(\g,\cdot_\g,\omega)$ be a quadratic pre-Lie algebra and $N$ a paracomplex structure on $(\g,\cdot_\g)$. If  \eqref{eq:SN} holds, we call $(\g,\cdot_\g,\omega,N)$ a \bf{para-complex quadratic pre-Lie algebra}.
   \end{defi}

   \begin{pro}\label{pro:quapara}
Let $(\g,\cdot_\g,\omega)$ be a quadratic pre-Lie algebra and $N$
a Nijenhuis operator on $(\g,\cdot_\g)$. Then
$(\g,\cdot_\g,\omega,N)$ is a  para-complex quadratic pre-Lie
algebra if and only if there exist two isotropic pre-Lie
subalgebras $\g^+$ and $\g^-$ (with respect to the quadratic
structure $\omega$) of the quadratic pre-Lie algebra
$(\g,\cdot_\g,\omega)$ such that $\dim(\g^+)=\dim(\g^-)$ and
$\g=\g^+\oplus\g^-$ as the direct sum of vector spaces.
\end{pro}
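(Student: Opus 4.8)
The plan is to prove the two directions separately, using the para-complex decomposition $\g=\g^+\oplus\g^-$ from Proposition~\ref{pro:subalgebra} together with the compatibility between $\omega$ and $N$.

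First I would prove the forward direction. Assume $(\g,\cdot_\g,\omega,N)$ is a para-complex quadratic pre-Lie algebra. By Proposition~\ref{pro:subalgebra}, $\g^+$ and $\g^-$ are pre-Lie subalgebras, and since $N$ is a para-complex structure we already have $\dim(\g^+)=\dim(\g^-)$ and $\g=\g^+\oplus\g^-$ as vector spaces. It remains to check that $\g^+$ and $\g^-$ are isotropic with respect to $\omega$. For $x,y\in\g^+$ we have $N(x)=x$ and $N(y)=y$, so \eqref{eq:SN} gives
$$\omega(x,y)=\omega(N(x),y)=-\omega(x,N(y))=-\omega(x,y),$$
hence $\omega(x,y)=0$ since the characteristic is $0$; thus $\g^+$ is isotropic. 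The same computation with $N(x)=-x$, $N(y)=-y$ shows $\g^-$ is isotropic.

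For the converse, suppose $\g=\g^+\oplus\g^-$ with $\g^\pm$ isotropic pre-Lie subalgebras of equal dimension. Define $N:\g\to\g$ by $N|_{\g^+}=\Id$ and $N|_{\g^-}=-\Id$, so that $N^2=\Id$ and the eigenspace dimension condition holds by hypothesis. I must verify two things: that $N$ is a Nijenhuis operator on $(\g,\cdot_\g)$, and that \eqref{eq:SN} holds. For the Nijenhuis condition \eqref{eq:nij-opera}, I would check it on homogeneous elements $x\in\g^{\epsilon_1}$, $y\in\g^{\epsilon_2}$ with $\epsilon_i\in\{+,-\}$, splitting into the cases $\epsilon_1=\epsilon_2$ and $\epsilon_1\neq\epsilon_2$; when $\epsilon_1=\epsilon_2$ the product $x\cdot_\g y$ stays in $\g^{\epsilon_1}$ because $\g^{\epsilon_1}$ is a subalgebra, and both sides of \eqref{eq:nij-opera} collapse to $\epsilon_1^2\,(x\cdot_\g y)=x\cdot_\g y$, while when $\epsilon_1\neq\epsilon_2$ a short direct computation using $N(x)=\epsilon_1 x$, $N(y)=\epsilon_2 y$ shows both sides vanish. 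For \eqref{eq:SN}: if $x,y$ lie in the same summand $\g^\epsilon$, then $\omega(N(x),y)=\epsilon\,\omega(x,y)=0=-\epsilon\,\omega(x,y)=-\omega(x,N(y))$ by isotropy; if $x\in\g^+$, $y\in\g^-$ (or vice versa), then $\omega(N(x),y)=\omega(x,y)$ and $-\omega(x,N(y))=-\omega(x,-y)=\omega(x,y)$, so they agree. Hence \eqref{eq:SN} holds and $(\g,\cdot_\g,\omega,N)$ is a para-complex quadratic pre-Lie algebra.

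The only genuinely nontrivial point is verifying the Nijenhuis condition \eqref{eq:nij-opera} in the mixed case $\epsilon_1\neq\epsilon_2$; I expect this to reduce, after substituting $N(x)=\epsilon_1 x$ and $N(y)=\epsilon_2 y$, to the identity $N\big((\epsilon_1+\epsilon_2)(x\cdot_\g y)-N(x\cdot_\g y)\big)=-x\cdot_\g y$, i.e. to $N^2(x\cdot_\g y)=x\cdot_\g y$ when $\epsilon_1+\epsilon_2=0$, which is automatic. So even this case is essentially formal once the eigenspace bookkeeping is set up; the main conceptual input is simply that $\g^\pm$ are subalgebras (giving the Nijenhuis condition) and isotropic (giving \eqref{eq:SN}), which is the standard dictionary between para-complex structures and transversal decompositions.
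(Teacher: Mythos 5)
Your proposal is correct and follows essentially the same route as the paper: isotropy of $\g^\pm$ from \eqref{eq:SN} in the forward direction, and in the converse the operator $N(x+y)=x-y$ with the eigenspace case analysis for the Nijenhuis condition and the isotropy computation for \eqref{eq:SN}. The only difference is that you spell out the verification that this $N$ is a Nijenhuis operator (which indeed reduces to the subalgebra property in the same-sign cases and to $N^2=\Id$ in the mixed cases), a step the paper asserts without detail.
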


\pf Let $(\g,\cdot_\g,\omega,N)$ a  para-complex quadratic pre-Lie algebra.  By Proposition \ref{pro:subalgebra}, both $\g^+$ and $\g^-$ are pre-Lie subalgebras. By
 \eqref{eq:SN}, both $\g^+$ and $\g^-$ are isotropic.

Conversely, let $N:\g\rightarrow \g$ be a linear transformation
defined by
$$N(x+y)=x-y,\quad \forall x\in \g^+, y\in\g^-.$$
Then $N^2={\rm Id}$ and $N$ is a Nijenhuis
operator on the pre-Lie algebra $(\g,\cdot_\g)$.  It is obvious that
$$\dim ({\rm ker} N+{\rm Id})=\dim \g^-=\dim \g^+=\dim ({\rm ker} N-{\rm
Id}).$$ Moreover, since both $\g^+$ and $\g^-$ are isotropic, for all $x_1,x_2\in \g^+, y_1,y_2\in
\g^-$, we have
$$\omega(N(x_1+y_1), x_2+y_2)=\omega (x_1,y_2)-\omega(y_1,x_2)=-\omega (x_1+y_1, N(x_2+y_2)),$$
 which implies that \eqref{eq:SN} holds. Thus, $(\g,\cdot_\g,\omega,N)$ a  para-complex quadratic pre-Lie algebra. \qed\vspace{3mm}

  The following theorem establishes the relation between
para-complex quadratic pre-Lie algebras and para-K\"{a}hler Lie
algebras.

\begin{thm}
Let $(\g,[\cdot,\cdot]_\g,\omega,N)$ be a para-K\"{a}hler Lie algebra  and $(\g,\cdot_\g,\omega)$ the associated quadratic pre-Lie algebra given in Theorem \ref{thm:sp}. Then
\begin{itemize}
  \item[\rm(i)]$(\g, \cdot_\g,\omega,N)$ is a para-complex quadratic pre-Lie algebra;

   \item[\rm(ii)] $(\g,[\cdot,\cdot]_N,\omega)$ is a symplectic Lie algebra and $(\g,[\cdot,\cdot]_N,\omega,N)$ is also a para-K\"{a}hler Lie algebra;

    \item[\rm(iii)] $(\g,\cdot_N)$ is the corresponding pre-Lie algebra associated to the symplectic Lie algebra $(\g,[\cdot,\cdot]_N,\omega)$,  where $\cdot_N$ is given by \eqref{eq:LANij}.
  \end{itemize}
\end{thm}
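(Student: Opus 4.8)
The plan is to obtain all three items from the single defining relation of Theorem~\ref{thm:sp}, namely $\omega(x\cdot_\g y,z)=-\omega(y,[x,z]_\g)$ for all $z$, combined with the nondegeneracy and skew-symmetry of $\omega$, the compatibility \eqref{eq:SN} $\omega(N(x),y)=-\omega(x,N(y))$, the identity $N^2={\Id}$, and the Nijenhuis property of $N$ on $(\g,[\cdot,\cdot]_\g)$; because $N^2={\Id}$ the latter can be rewritten as $[N(x),N(y)]_\g=N([N(x),y]_\g)+N([x,N(y)]_\g)-[x,y]_\g$. For (i), $(\g,\cdot_\g,\omega)$ is a quadratic pre-Lie algebra by Theorem~\ref{thm:sp}, while $N^2={\Id}$ and $\dim\ker(N+{\Id})=\dim\ker(N-{\Id})$ are hypotheses, so it remains only to check that $N$ satisfies the pre-Lie Nijenhuis condition \eqref{eq:nij-opera}; then \eqref{eq:SN} finishes the definition of a para-complex quadratic pre-Lie algebra. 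To verify \eqref{eq:nij-opera} I would pair both of its sides with $\omega(-,z)$ for arbitrary $z$, replace every occurrence of $\cdot_\g$ by the defining relation and every $N$ standing against a slot of $\omega$ by \eqref{eq:SN}, and observe that the identity reduces exactly to the rewritten Lie-Nijenhuis identity above; nondegeneracy of $\omega$ then yields \eqref{eq:nij-opera}. Alternatively one can show that $\g^\pm=\ker(N\mp{\Id})$ are Lagrangian with respect to $\omega$ by \eqref{eq:SN}, that they are Lie subalgebras (the Lie-algebra analogue of Proposition~\ref{pro:subalgebra}), hence pre-Lie subalgebras since $\omega(x\cdot_\g y,w)=-\omega(y,[x,w]_\g)=0$ for $x,y,w\in\g^+$, and then invoke Proposition~\ref{pro:quapara}.

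For (ii), note first that $[\cdot,\cdot]_N$ is a Lie bracket because $N$ is a Nijenhuis operator on $(\g,[\cdot,\cdot]_\g)$, and $\omega$ is unchanged hence still nondegenerate; so the only point is the $2$-cocycle identity \eqref{eq:Lie2-form} for $[\cdot,\cdot]_N$. Using \eqref{eq:SN} on the last summand gives $\omega([x,y]_N,z)=\omega([N(x),y]_\g,z)+\omega([x,N(y)]_\g,z)+\omega([x,y]_\g,N(z))$, and taking the cyclic sum over $x,y,z$ produces nine terms which I would regroup into three copies of the (vanishing) $2$-cocycle identity for $\omega$ and the original bracket, evaluated at the triples $(N(x),y,z)$, $(N(y),z,x)$ and $(N(z),x,y)$. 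Hence $(\g,[\cdot,\cdot]_N,\omega)$ is a symplectic Lie algebra. For the para-K\"ahler claim, $N^2={\Id}$, the dimension condition and \eqref{eq:SN} are inherited verbatim, and $N$ is again a Nijenhuis operator on $(\g,[\cdot,\cdot]_N)$ by the standard hierarchy property of Nijenhuis operators, which one checks directly: using the rewritten Lie-Nijenhuis identity and $N^2={\Id}$, both $[N(x),N(y)]_N$ and $N(x\cdot\text{-free combination})$ collapse to $N[x,y]_\g$, so $[N(x),N(y)]_N=N([x,y]_{N,N})$ with $([\cdot,\cdot]_N)_N=[\cdot,\cdot]_\g$.

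For (iii), by nondegeneracy of $\omega$ it suffices to verify $\omega(x\cdot_N y,z)=-\omega(y,[x,z]_N)$ for all $z$. Expanding $x\cdot_N y=N(x)\cdot_\g y+x\cdot_\g N(y)-N(x\cdot_\g y)$ via \eqref{eq:LANij} and applying $\omega(-,z)$ termwise, the defining relation gives $\omega(N(x)\cdot_\g y,z)=-\omega(y,[N(x),z]_\g)$, while \eqref{eq:SN} together with the defining relation gives $\omega(x\cdot_\g N(y),z)=\omega(y,N[x,z]_\g)$ and $-\omega(N(x\cdot_\g y),z)=-\omega(y,[x,N(z)]_\g)$; adding these yields $-\omega(y,[N(x),z]_\g+[x,N(z)]_\g-N[x,z]_\g)=-\omega(y,[x,z]_N)$, so $\cdot_N$ is the pre-Lie algebra attached to $(\g,[\cdot,\cdot]_N,\omega)$ in the sense of Theorem~\ref{thm:sp} (that $\cdot_N$ is a pre-Lie structure is also guaranteed by Proposition~\ref{pro:LANij} using (i)). The only genuine difficulty is the sign bookkeeping: two distinct ``$N$-intertwining'' rules are in play, $\omega(N(a),b)=-\omega(a,N(b))$ and $\omega(a\cdot_\g b,c)=-\omega(b,[a,c]_\g)$, and the main care is to apply each in the correct slot; once the relevant identities are lined up, (i), (ii) and (iii) are short, with the verification of \eqref{eq:nij-opera} in (i) being the most delicate step.
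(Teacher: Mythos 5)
Your proposal is correct, and it reaches the three items by a route that only partially coincides with the paper's. The paper's proof is organized around a single key identity, namely $\omega(x\cdot_N y,z)=-\omega(y,[x,z]_N)$ (its equation \eqref{eq:LietoLSA2}), which it derives first and then uses as the engine for all three parts: for (i) it combines this identity with \eqref{eq:LietoLSA}, \eqref{eq:SN}, $N^2=\Id$ and the Lie--Nijenhuis identity in one chain of equalities showing $\omega\bigl(y,N(x)\cdot_\g N(z)-N(x\cdot_N z)\bigr)=0$; for (ii) it gets the $2$-cocycle condition for $[\cdot,\cdot]_N$ almost for free from \eqref{eq:LietoLSA2} and $[x,y]_N=x\cdot_N y-y\cdot_N x$; and (iii) is then immediate. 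You derive exactly the same identity, but you file it under (iii) and prove (i) and (ii) independently of it. Your (i) --- pairing the pre-Lie Nijenhuis condition \eqref{eq:nij-opera} directly against $\omega(-,z)$ and reducing to $[N(x),N(z)]_\g=N([N(x),z]_\g)+N([x,N(z)]_\g)-[x,z]_\g$ --- is computationally equivalent to the paper's but more direct (I checked that the left side becomes $\omega(y,N([N(x),z]_\g))$ and the right side $\omega(y,[N(x),N(z)]_\g-N([x,N(z)]_\g)+[x,z]_\g)$, so it does close up); your alternative via Lagrangian eigenspaces and Proposition \ref{pro:quapara} also works. Your (ii) is genuinely different and arguably cleaner: expanding $\omega([x,y]_N,z)$ with \eqref{eq:SN} and regrouping the cyclic sum into three instances of the cocycle identity for the original bracket, evaluated at $(N(x),y,z)$, $(x,N(y),z)$, $(x,y,N(z))$, uses neither the pre-Lie structure nor the Nijenhuis property, so it shows the symplectic claim under weaker hypotheses than the paper's argument needs; the trade-off is that the paper's route gets (ii) in two lines once \eqref{eq:LietoLSA2} is in hand. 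The one assertion both you and the paper leave as "standard" is that $N$ remains Nijenhuis for $[\cdot,\cdot]_N$; your sketch via $([\cdot,\cdot]_N)_N=[\cdot,\cdot]_\g$ when $N^2=\Id$ is correct.
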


\pf (i) By \eqref{eq:LietoLSA}, for all $x,y,z\in\g$, we have
\begin{eqnarray}
\nonumber\omega(x\cdot_N y,z)&=&\omega(x\cdot_\g N(y)+N(x)\cdot_\g y-N(x\cdot_\g y),z)\\
\nonumber&=&-\omega(N(y),[x,z]_\g)-\omega(y,[N(x),z]_\g)-\omega(N(x\cdot_\g y),z)\\
\nonumber&=&\omega(y,N([x,z]_\g))-\omega(y,[N(x),z]_\g)+\omega(x\cdot_\g y,N(z))\\
\nonumber&=&\omega(y,N([x,z]_\g))-\omega(y,[N(x),z]_\g)-\omega(y,[x,N(z)]_\g)\\
\label{eq:LietoLSA2}&=&-\omega(y,[x,z]_N).
\end{eqnarray}
By \eqref{eq:LietoLSA}, \eqref{eq:LietoLSA2}, $N^2={\Id}$ and the fact that $N$ is a Nijenhuis operator on the Lie algebra $(\g,[\cdot,\cdot]_\g)$, we have
\begin{eqnarray*}
&&\omega(y,N(x)\cdot_\g N(z)-N(x\cdot_N z))\\&=&\omega(y,N(x)\cdot_\g N(z))+\omega(N(y),x\cdot_N z)\\
&=&\omega(N([N(x),y]_\g),z)-\omega([x,N(y)]_N,z)\\
&=&\omega(N([N(x),y]_\g)-[N(x),N(y)]_\g-[x,N^2(y)]_\g+N([x,N(y)]_\g),z)\\
&=&\omega\Big(N([N(x),y]_\g)-N\big([N(x),y]_\g+[x,N(y)]_\g-N([x,y]_\g)\big)-[x,N^2(y)]_\g+N([x,N(y)]_\g),z\Big) \\
 &=&\omega(N^2([x, y]_\g)-[x, N^2(y)]_\g,z)\\
&=&0,
\end{eqnarray*}
which implies that  $N$ is a Nijenhuis operator on the pre-Lie algebra $(\g,\cdot_\g)$. Furthermore, it is obvious that $(\g, \cdot_\g,\omega,N)$ is a para-complex quadratic pre-Lie algebra.

(ii) By \eqref{eq:LietoLSA2} and $[x,y]_N=x\cdot_N y-y\cdot_N x$, we have
\begin{eqnarray*}
\dM^N\omega(x,y,z)&=&\omega([x,y]_N,z)+\omega([z,x]_N,y)+\omega([y,z]_N,x)\\
&=&\omega(x\cdot_N y-y\cdot_N x,z)+\omega([z,x]_N,y)+\omega([y,z]_N,x)\\
&=&\omega(x\cdot_N y,z)+\omega(y,[x,z]_N)-\omega(y\cdot_N x,z)-\omega(x,[y,z]_N)=0,
\end{eqnarray*}
which implies that $\omega$ is $2$-cocycle on the Lie algebra $(\g,[\cdot,\cdot]_N)$. Thus $(\g,[\cdot,\cdot]_N,\omega)$ is a symplectic Lie algebra. Furthermore, since $N$ is also a Nijenhuis operator on the Lie algebra $(\g,[\cdot,\cdot]_N)$, $(\g,[\cdot,\cdot]_N,\omega,N)$ is also a para-K\"{a}hler Lie algebra;

(iii)  By \eqref{eq:LietoLSA2}, $(\g,\cdot_N)$ is the corresponding pre-Lie algebra associated to the symplectic Lie algebra $(\g,[\cdot,\cdot]_N,\omega)$.
\qed\vspace{3mm}

In  \cite{Ova}, the author gave a
classification of 4-dimensional real symplectic Lie algebras. Now
we consider the following example of 4-dimensional para-K\"{a}hler
Lie algebras and the corresponding para-complex quadratic pre-Lie
algebras. See \cite{Cal,Cal1} for more details on the
classification of 4-dimensional para-K\"{a}hler Lie algebras.

\begin{ex}{\rm
Let $\{e_1,e_2,e_3,e_4\}$ be a basis of a $4$-dimensional
symplectic Lie algebra  $\g$. The Lie algebra structure is given
by   the following non-zero brackets
$$[e_1,e_3]_\g=e_3,\quad [e_1,e_4]_\g=e_4,\quad[e_2,e_3]_\g=-e_4,\quad[e_2,e_4]_\g=e_3,$$
and the symplectic structure $\omega$ is given by $\omega=e^1\wedge e^3+e^2\wedge e^4$, where $e^i,i=1,\ldots,4$ is the dual basis of $e_i$.
The corresponding pre-Lie algebra structure is given by
\begin{eqnarray*}
&&e_1\cdot_\g e_1=-e_1,\quad e_2\cdot_\g e_2=e_1,\quad e_1\cdot_\g e_2=e_2\cdot_\g e_1=-e_2,\\
&&e_3\cdot_\g e_1=-e_3,\quad e_3\cdot_\g e_2=e_4,\quad e_4\cdot_\g e_1=-e_4,\quad e_4\cdot_\g e_2=-e_3.
\end{eqnarray*}
Let
$
N_1 =\begin{bmatrix}1& 0&0&0\\ 0&1&0&0\\ 0&0&-1&0\\0&0&0&-1\end{bmatrix},\quad N_2 =\begin{bmatrix}-1& 0&0&0\\ 0&-1&0&0\\ 0&0&1&0\\0&0&0&1\end{bmatrix}.
$
 Then $(\g,[\cdot,\cdot]_\g,\omega,N_i)$, $i=1,2$, are para-K\"{a}hler Lie algebras and $(\g,\cdot_\g,\omega,N_i)$ are the corresponding para-complex quadratic pre-Lie algebras.}
\end{ex}

\subsection{Para-complex pseudo-Hessian pre-Lie algebras}

\begin{defi}
  Let $N$ a para-complex structure and $\frkB$ be a pseudo-Hessian structure   on a pre-Lie algebra $(\g,\cdot_\g)$. Then $(\frkB,N)$ is called a {\bf para-complex pseudo-Hessian structure} on the pre-Lie algebra  $(\g,\cdot_\g)$ if
  \begin{equation}
 \frkB(N(x),y)=-\frkB(x,N(y)),\quad \forall~x,y\in \g.
\end{equation}
\end{defi}

Let $(\frkB,N)$ be a   paracomplex pseudo-Hessian structure  on the pre-Lie algebra  $(\g,\cdot_\g)$.
 Then we can obtain a skew-symmetric bilinear form $\omega\in\wedge^2\g^*$ by
 $$
 \omega(x,y):=\frkB(x,N(y)),\quad \forall~x,y\in \g.
 $$
Furthermore, $\omega$ and the para-complex structure $N$ are compatible in the sense that
$$
\omega(N(x),N(y))=-\omega(x,y).
$$

Similar as Proposition \ref{pro:quapara}, we have

\begin{pro}
Let $(\g,\cdot_\g,\frkB)$ be a pseudo-Hessian pre-Lie algebra and
$N$ a paracomplex structure on a pre-Lie algebra $(\g,\cdot_\g)$.
Then  $(\g,\cdot_\g,\frkB,N)$ is a  para-complex pseudo-Hessian
pre-Lie algebra if and only if there exist two isotropic pre-Lie
subalgebras $\g^+$ and $\g^-$ (with respect to the symmetric
bilinear form $\frkB$) of the pseudo-Hessian pre-Lie algebra $
(\g,\cdot_\g,\frkB)$    such that $\dim(\g^+)=\dim(\g^-)$ and
$\g=\g^+\oplus\g^-$ as the direct sum of vector spaces.

\end{pro}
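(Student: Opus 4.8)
The statement is the pseudo-Hessian analogue of Proposition~\ref{pro:quapara}, so the plan is to imitate that proof almost verbatim, replacing the invariance identity for the quadratic form $\omega$ by the $2$-cocycle condition $\dM^T\frkB=0$ for the symmetric form $\frkB$. First I would prove the forward direction. Assume $(\g,\cdot_\g,\frkB,N)$ is a para-complex pseudo-Hessian pre-Lie algebra. Set $\g^\pm=\{x\in\g\mid N(x)=\pm x\}$. Since $N$ is a para-complex structure, $N^2=\Id$ and $\dim\g^+=\dim\g^-$, so $\g=\g^+\oplus\g^-$ as vector spaces. By Proposition~\ref{pro:subalgebra}, both $\g^+$ and $\g^-$ are pre-Lie subalgebras. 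It remains to check isotropy: for $x,y\in\g^+$, the compatibility $\frkB(N(x),y)=-\frkB(x,N(y))$ gives $\frkB(x,y)=-\frkB(x,y)$, hence $\frkB(x,y)=0$ (here I use $\mathrm{char}\,\K=0$); the same computation works for $\g^-$. So $\g^+,\g^-$ are isotropic transversal pre-Lie subalgebras of equal dimension.

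For the converse, suppose $\g=\g^+\oplus\g^-$ with $\g^\pm$ isotropic pre-Lie subalgebras and $\dim\g^+=\dim\g^-$. Define $N:\g\to\g$ by $N(x+y)=x-y$ for $x\in\g^+$, $y\in\g^-$; then $N^2=\Id$ and $\ker(N-\Id)=\g^+$, $\ker(N+\Id)=\g^-$ have equal dimension. The main point is that $N$ is a Nijenhuis operator on $(\g,\cdot_\g)$: one checks \eqref{eq:nij-opera} by bilinearity on the four cases determined by whether each argument lies in $\g^+$ or $\g^-$, using that $\g^+\cdot_\g\g^+\subseteq\g^+$ and $\g^-\cdot_\g\g^-\subseteq\g^-$ since these are subalgebras. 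For instance, for $x\in\g^+$, $y\in\g^+$ both sides equal $x\cdot_\g y$; for $x\in\g^+$, $y\in\g^-$ the left side is $N(x)\cdot_\g N(y)=-x\cdot_\g y$, and the right side is $N(x\cdot_\g y + x\cdot_\g(-y) - N(x\cdot_\g y)) = N(-N(x\cdot_\g y)) = -x\cdot_\g y$; the mixed cases are analogous. Hence $N$ is a para-complex structure. Finally, for the compatibility $\frkB(N(x),y)=-\frkB(x,N(y))$, write $x=x_1+y_1$, $y=x_2+y_2$ with $x_i\in\g^+$, $y_i\in\g^-$; since $\g^+,\g^-$ are isotropic, $\frkB(N(x),y)=\frkB(x_1-y_1,x_2+y_2)=\frkB(x_1,y_2)-\frkB(y_1,x_2)$, while $-\frkB(x,N(y))=-\frkB(x_1+y_1,x_2-y_2)=-\frkB(x_1,-y_2)-\frkB(y_1,x_2)=\frkB(x_1,y_2)-\frkB(y_1,x_2)$, and the two agree. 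Therefore $(\g,\cdot_\g,\frkB,N)$ is a para-complex pseudo-Hessian pre-Lie algebra.

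I do not expect a serious obstacle: the argument is a direct transcription of Proposition~\ref{pro:quapara}, and the only things one must be careful about are (a) that $\frkB$ being a $2$-cocycle (pseudo-Hessian) is already part of the hypotheses on both sides, so no invariance-type identity needs to be reproved, and (b) that the Nijenhuis condition for $N$ really only uses the subalgebra property of $\g^\pm$, exactly as in the proof of Proposition~\ref{pro:subalgebra}. The mildest point of caution is the case-check of \eqref{eq:nij-opera}, which is routine and can be stated with the phrase ``by a computation analogous to the proof of Proposition~\ref{pro:quapara}'' rather than written out in full.
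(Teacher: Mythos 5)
Your proof is correct and is exactly the argument the paper intends: the paper's own ``proof'' of this proposition consists of the single remark that it is totally parallel to Proposition~\ref{pro:quapara} and is left to the reader. Your write-up is that parallel argument carried out faithfully --- the sign in $\frkB(N(x),y)=-\frkB(x,N(y))$ forcing isotropy of the eigenspaces, the four-case verification that an involution whose eigenspaces are subalgebras is automatically Nijenhuis, and the direct check of the compatibility identity from isotropy --- so there is nothing to add.
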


\pf The proof is totally parallel to that of Proposition
\ref{pro:quapara}, we leave it to  readers. \qed

\begin{rmk}
In \cite{NiBai}, we call the above structure a {\bf Manin triple
of pre-Lie algebras associated to a nondegenerate symmetric
2-cocycle} which is equivalent to a bialgebra structure, namely,
L-dendriform bialgebra.
\end{rmk}
\emptycomment{
The following example says not every Nijenhuis operator $N$ on a symplectic Lie algebra is a Nijenhuis operator on the corresponding pre-Lie algebra.
\begin{ex}
Let $\{e_1,e_2,e_3,e_4\}$ be a basis of a $4$-dimensional symplectic Lie algebra  $\g$. The Lie algebra structure is  given by
$$[e_1,e_2]_\g=e_4$$
and the symplectic structure $\omega$ is given by $\omega=e^1\wedge e^3+e^2\wedge e^4$.

The corresponding pre-Lie algebra structure is given by
$$e_1\cdot_\g e_2=e_4,\quad e_2\cdot_\g e_2=-e_3.$$
By direct calculation, we can verify that
$$
N(\g)=\Big\{\begin{bmatrix}a& b&c&0\\ 0&0&0&0\\ d&e&-a&0\\e&f&-b&0\end{bmatrix}\Big|\forall~ a,b,c,d,e,f\in\Real\Big\}
$$
are Nijenhuis operators for the above Lie algebra. It is not hard to see that the only following subsets in $N(\g)$ are  Nijenhuis operators for the corresponding pre-Lie algebra
$$
N_1(\g)=\Big\{\begin{bmatrix}0& a&0&0\\ 0&0&0&0\\ b&c&0&0\\c&d&-a&0\end{bmatrix}\Big|\forall~ a,b,c,d\in\Real\Big\}
$$
and
$$
N_2(\g)=\Big\{\begin{bmatrix}a&b&c&0\\ 0&0&0&0\\-\frac{a^2}{c}&-\frac{ab}{c}&-a&0\\-\frac{ab}{c}&d&-b&0\end{bmatrix}\Big|\forall~ a,b,c,d\in\Real,c\neq 0\Big\}.
$$
\end{ex}
}

\section{Some examples of Nijenhuis operators on pre-Lie algebras}

\subsection{Pre-Lie algebras
from the operator form of the classical Yang-Baxter equation in
Lie algebras}

\begin{lem}{\rm(\cite{GolSok})}
Let $(\g,[\cdot,\cdot]_\g)$ be a Lie algebra and $r:\g\rightarrow \g$ a linear map satisfying the
operator form of the classical Yang-Baxter equation
\begin{equation}
[r(x),r(y)]_\g=r([r(x),y]_\g+[x,r(y)]_\g),\quad \forall~x,y\in\g.
\end{equation}
Then $(\g,\cdot^r)$ is a pre-Lie algebra, where $\cdot^r$ is defined by
\begin{equation}\label{eq:RotaLA}
x\cdot^r y=[r(x),y]_\g,\quad \forall~x,y\in\g.
\end{equation}
\end{lem}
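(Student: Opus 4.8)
The plan is to verify directly that the associator of $\cdot^r$ is symmetric in its first two arguments, which by definition is what it means for $(\g,\cdot^r)$ to be a pre-Lie algebra. First I would expand, using $x\cdot^r y=[r(x),y]_\g$ twice,
\[
(x\cdot^r y)\cdot^r z - x\cdot^r(y\cdot^r z)=[r([r(x),y]_\g),z]_\g-[r(x),[r(y),z]_\g]_\g .
\]

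Next I would use the operator form of the classical Yang-Baxter equation to rewrite $r([r(x),y]_\g)=[r(x),r(y)]_\g-r([x,r(y)]_\g)$ and substitute it into the first term. This produces three summands: $[[r(x),r(y)]_\g,z]_\g$, $-[r([x,r(y)]_\g),z]_\g$, and $-[r(x),[r(y),z]_\g]_\g$. Applying the Jacobi identity in $\g$ to the first of these, $[[r(x),r(y)]_\g,z]_\g=[r(x),[r(y),z]_\g]_\g-[r(y),[r(x),z]_\g]_\g$, the summand $[r(x),[r(y),z]_\g]_\g$ cancels against the last one, leaving
\[
(x\cdot^r y)\cdot^r z - x\cdot^r(y\cdot^r z)=-[r(y),[r(x),z]_\g]_\g-[r([x,r(y)]_\g),z]_\g .
\]

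Finally I would use skew-symmetry of the Lie bracket to write $-[r([x,r(y)]_\g),z]_\g=[r([r(y),x]_\g),z]_\g$, and then re-package both remaining terms via the definition of $\cdot^r$: the first is $-\,y\cdot^r(x\cdot^r z)$ and the second is $(y\cdot^r x)\cdot^r z$. Hence the whole expression equals $(y\cdot^r x)\cdot^r z-y\cdot^r(x\cdot^r z)$, which is exactly the associator with $x$ and $y$ interchanged; this gives $(x,y,z)=(y,x,z)$, as required. The computation is entirely mechanical and presents no real obstacle; the only mild subtlety is choosing to apply the Yang-Baxter relation to the first term and to expand the Jacobi identity in the direction that makes the cancellation visible.
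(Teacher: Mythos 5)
Your computation is correct: expanding the associator, substituting $r([r(x),y]_\g)=[r(x),r(y)]_\g-r([x,r(y)]_\g)$ from the Yang--Baxter identity, and applying the Jacobi identity does yield exactly $(y\cdot^r x)\cdot^r z-y\cdot^r(x\cdot^r z)$, so the associator is symmetric in its first two arguments. The paper itself gives no proof of this lemma (it is quoted from the reference of Golubschik and Sokolov), so there is nothing to compare against; your direct verification is the standard argument and is complete.
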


\begin{pro}
  Let $(\g,[\cdot,\cdot]_\g)$ be a Lie algebra. Let $r:\g\rightarrow \g$ be a linear map satisfying the
operator form of the classical Yang-Baxter equation in
$(\g,[\cdot,\cdot]_\g)$ and $N$ a Nijenhuis operator on
$(\g,[\cdot,\cdot]_\g)$. If $N\circ r=r\circ N$,
 then
$r$ also satisfies the operator form of the classical Yang-Baxter
equation in the Lie algebra $(\g,[\cdot,\cdot]_N)$. Furthermore,
$N$ is a Nijenhuis operator on the pre-Lie algebra $(\g,\cdot^r)$
given by \eqref{eq:RotaLA} and satisfies
\begin{equation}\label{eq:NR}
x\cdot_{N}^r y=[r(x),y]_N,\quad \forall~x,y\in\g.
\end{equation}
\end{pro}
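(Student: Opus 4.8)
The plan is to prove the three assertions in turn, using only three ingredients: the hypothesis $N\circ r=r\circ N$, the definition $[x,y]_N=[N(x),y]_\g+[x,N(y)]_\g-N([x,y]_\g)$, and the defining identity $N([x,y]_N)=[N(x),N(y)]_\g$ for a Nijenhuis operator on the Lie algebra $(\g,[\cdot,\cdot]_\g)$ (recalled in Section~\ref{sec:Nijenhuis}). Everything reduces to short computations in which one slides $N$ past $r$.

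\emph{Step 1: $r$ solves the operator CYBE in $(\g,[\cdot,\cdot]_N)$.} First I would expand $[r(x),r(y)]_N=[N(r(x)),r(y)]_\g+[r(x),N(r(y))]_\g-N([r(x),r(y)]_\g)$, rewrite each $N(r(\cdot))$ as $r(N(\cdot))$, and then apply the operator CYBE in $(\g,[\cdot,\cdot]_\g)$ to each of the brackets $[r(N(x)),r(y)]_\g$, $[r(x),r(N(y))]_\g$, and $[r(x),r(y)]_\g$ (for the last one, combined again with $N\circ r=r\circ N$, so that $N([r(x),r(y)]_\g)=r\,N\big([r(x),y]_\g+[x,r(y)]_\g\big)$). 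After factoring out $r$, the six terms in the argument split into the group of three in which $r$ acts on $x$, which reassembles via the definition of $[\cdot,\cdot]_N$ into $[r(x),y]_N$, and the group of three in which $r$ acts on $y$, which reassembles into $[x,r(y)]_N$. Hence $[r(x),r(y)]_N=r\big([r(x),y]_N+[x,r(y)]_N\big)$. This regrouping is the only place where one must be careful with bookkeeping, but it is purely mechanical.

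\emph{Step 2: the identity \eqref{eq:NR}.} By \eqref{eq:LANij} applied to the pre-Lie multiplication $\cdot^r$, one has $x\cdot_N^r y=N(x)\cdot^r y+x\cdot^r N(y)-N(x\cdot^r y)=[r(N(x)),y]_\g+[r(x),N(y)]_\g-N([r(x),y]_\g)$. Replacing $r(N(x))$ by $N(r(x))$ and comparing with the definition of $[\cdot,\cdot]_N$ (with first argument $r(x)$ and second argument $y$) gives $x\cdot_N^r y=[r(x),y]_N$, which is \eqref{eq:NR}.

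\emph{Step 3: $N$ is Nijenhuis on $(\g,\cdot^r)$.} By Lemma~\ref{lem:morp} applied with $\pi=\cdot^r$, the Nijenhuis torsion of $N$ relative to $\cdot^r$ equals $N(x\cdot_N^r y)-N(x)\cdot^r N(y)$, so it suffices to check $N(x\cdot_N^r y)=N(x)\cdot^r N(y)$. Using \eqref{eq:NR}, the left-hand side is $N([r(x),y]_N)$, which by the Nijenhuis identity for $N$ on $(\g,[\cdot,\cdot]_\g)$ equals $[N(r(x)),N(y)]_\g=[r(N(x)),N(y)]_\g=N(x)\cdot^r N(y)$. Hence the Nijenhuis torsion vanishes, i.e. $N$ is a Nijenhuis operator on $(\g,\cdot^r)$. (As a consistency check, since $r$ solves the CYBE in $(\g,[\cdot,\cdot]_N)$, the lemma of \cite{GolSok} already gives that $(\g,\cdot_N^r)$ is a pre-Lie algebra, in agreement with Proposition~\ref{pro:LANij}.) I do not expect any genuine obstacle here; all three steps are short once one commits to exploiting $N\circ r=r\circ N$, and the only real care needed is the term bookkeeping in Step~1.
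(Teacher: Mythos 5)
Your proposal is correct and follows essentially the same route as the paper: expand $[r(x),r(y)]_N$, commute $N$ past $r$, apply the operator CYBE to each bracket and regroup into $[r(x),y]_N+[x,r(y)]_N$; then verify the Nijenhuis condition on $(\g,\cdot^r)$ via the Lie-algebra Nijenhuis identity applied to $[N(r(x)),N(y)]_\g$. Your only (harmless) reorganization is proving \eqref{eq:NR} first and then citing Lemma \ref{lem:morp}, where the paper runs the two computations together in a single chain.
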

\pf By direct calculation, we have
\begin{eqnarray*}
[r(x),r(y)]_N&=&[N(r(x)),r(y)]_\g+[r(x),N(r(x))]_\g-N([r(x),r(y)]_\g)\\
&=&[r(N(x)),r(y)]_\g+[r(x),r(N(y))]_\g-N([r(x),r(y)]_\g)\\
&=&r([r(N(x)),y]_\g+[N(x),r(y)]_\g)+r([r(x),N(y)]_\g\\
&&+[x,r(N(y))]_\g)-r(N([r(x),y]_\g)+N([x,r(y)]_\g))\\
&=&r\Big([N(r(x)),y]_\g+[r(x),N(y)]_\g-N([r(x),y]_\g)\\
&&+[N(x),r(y)]_\g+[x,N(r(y))]_\g-N([x,r(y)]_\g)\Big)\\
&=&r([r(x),y]_N+[x,r(y)]_N),
\end{eqnarray*}
which implies that  $r$ also satisfies the operator form of the
classical Yang-Baxter equation in  the Lie algebra
$(\g,[\cdot,\cdot]_N)$.

Furthermore, we have
\begin{eqnarray*}
N(x)\cdot^r N(y)&=&[r(N(x)),N(y)]_\g=[N(r(x)),N(y)]_\g\\
&=&N([r(N(x)),y]_\g+[r(x),N(y)]_\g-N([r(x),y]_\g))\\
&=&N(N(x)\cdot^r y+x\cdot^r N(y)-N(x\cdot^r y)),
\end{eqnarray*}
which implies that $N$ is a Nijenhuis operator on the pre-Lie algebra $(\g,\cdot^r)$ and \eqref{eq:NR} holds. \qed

\subsection{Pre-Lie algebras associated to integrable (generalized) Burgers equation}
Let $\g$ be a vector space  with an ordinary scalar product $\langle\cdot,\cdot\rangle$ and $a\in \g$. Then
\begin{equation}
x\cdot^a y=\langle x,y\rangle a+\langle x,a\rangle y,\quad\forall~x,y\in \g,
\end{equation}
defines a pre-Lie algebra structure on $\g$.
\begin{rmk}
This pre-Lie algebra plays an important role in the integrable (generalized) Burgers equation \cite{BurgerEq2}.
\end{rmk}

\begin{pro}
Let $(\g,\cdot^a)$ be the pre-Lie algebra given above.
Then any $N\in\gl(\g)$ is a Nijenhuis operator on the sub-adjacent Lie algebra $(\g,[\cdot,\cdot]^c)$ of the pre-Lie algebra $(\g,\cdot^a)$. Furthermore,
if $N\in\gl(\g)$  satisfies
\begin{equation}\label{eq:Burger2}
\langle N(x),y\rangle=-\langle x,N(y)\rangle,\quad \forall~x,y\in \g,
\end{equation}
then $N$ is a Nijenhuis operator on the pre-Lie algebra  $(\g,\cdot^a)$ if and only if $$\langle N(x),N(y)\rangle a=-\langle x,y\rangle N^2(a).$$
\end{pro}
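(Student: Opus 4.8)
The plan is to prove both statements by direct substitution into the defining Nijenhuis identities, taking advantage of the extremely simple form of the multiplication $x\cdot^a y=\langle x,y\rangle a+\langle x,a\rangle y$. First I would record the sub-adjacent Lie bracket: $[x,y]^c=x\cdot^a y-y\cdot^a x=\langle x,a\rangle y-\langle y,a\rangle x$. For an arbitrary $N\in\gl(\g)$ I then expand $[N(x),y]^c+[x,N(y)]^c-N([x,y]^c)$; the terms involving $\langle y,a\rangle N(x)$ and the terms involving $\langle x,a\rangle N(y)$ cancel in pairs, and what survives is $\langle N(x),a\rangle y-\langle N(y),a\rangle x$. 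Applying $N$ yields $\langle N(x),a\rangle N(y)-\langle N(y),a\rangle N(x)$, which is exactly $[N(x),N(y)]^c$; hence every $N$ is a Nijenhuis operator on $(\g,[\cdot,\cdot]^c)$.

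For the second assertion I would assume $\langle N(x),y\rangle=-\langle x,N(y)\rangle$ and compute the two sides of the pre-Lie Nijenhuis condition $N(x)\cdot^a N(y)=N\big(N(x)\cdot^a y+x\cdot^a N(y)-N(x\cdot^a y)\big)$. The left-hand side is $\langle N(x),N(y)\rangle a+\langle N(x),a\rangle N(y)$. For the argument of $N$ on the right, expanding and cancelling the two occurrences of $\langle x,a\rangle N(y)$ leaves $\langle N(x),y\rangle a+\langle x,N(y)\rangle a+\langle N(x),a\rangle y-\langle x,y\rangle N(a)$; here the skew-symmetry hypothesis kills the first two $a$-terms, so the argument simplifies to $\langle N(x),a\rangle y-\langle x,y\rangle N(a)$, and applying $N$ gives $\langle N(x),a\rangle N(y)-\langle x,y\rangle N^2(a)$. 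Comparing with the left-hand side, the term $\langle N(x),a\rangle N(y)$ cancels on both sides and the Nijenhuis identity reduces precisely to $\langle N(x),N(y)\rangle a=-\langle x,y\rangle N^2(a)$, valid for all $x,y\in\g$, which is the claimed criterion.

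There is no real obstacle here beyond careful bookkeeping of the cancellations; the one substantive point is that the reduction of the two $a$-terms in the second part is exactly where the hypothesis $\langle N(x),y\rangle=-\langle x,N(y)\rangle$ is used, so the ``if and only if'' is genuinely a statement about $N$ being skew-symmetric with respect to $\langle\cdot,\cdot\rangle$. It is also worth noting that, since $a$ and $N^2(a)$ are fixed vectors while both sides depend bilinearly on $x,y$, the final condition is an honest identity of vectors and does not need to be checked only on basis elements.
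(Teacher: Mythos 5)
Your proposal is correct and follows essentially the same route as the paper: both parts are proved by directly expanding the Nijenhuis identities using the explicit formulas $[x,y]^c=\langle x,a\rangle y-\langle y,a\rangle x$ and $x\cdot^a y=\langle x,y\rangle a+\langle x,a\rangle y$, with the same cancellations and the same use of the skew-symmetry hypothesis to kill the two $a$-terms. The residual expression $\langle N(x),N(y)\rangle a+\langle x,y\rangle N^2(a)$ you obtain is exactly what the paper computes, so the reduction to the stated criterion is identical.
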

\pf For all $x,y\in \g$ and $N\in\gl(\g)$, we have
\begin{eqnarray*}
&&[N(x),N(y)]^c-N([N(x),y]^c+[x,N(y)]^c-N([x,y]^c))\\
&=&\langle N(x),a\rangle N(y)-\langle N(y),a\rangle N(x)-N\Big(\langle N(x),a\rangle y-\langle y,a\rangle N(x)\\
&&+\langle x,a\rangle N(y)-\langle N(y),a\rangle x-\langle x,a\rangle N(y)+\langle y,a\rangle N(x)\Big)\\
&=&0.
\end{eqnarray*}
Thus, any $N\in\gl(\g)$ is a Nijenhuis operator on the sub-adjacent Lie algebra $(\g,[\cdot,\cdot]^c)$.

For any $N$ satisfying \eqref{eq:Burger2}, we have
\begin{eqnarray*}
&&N(x)\cdot^a N(y)-N(N(x)\cdot^a y+x\cdot^aN(y)-N(x\cdot^a y))\\
&=&\langle N(x),N(y)\rangle a+\langle N(x),a\rangle N(y)-N\Big(\langle N(x),y\rangle a+\langle N(x),a\rangle y\\
&&+\langle x,N(y)\rangle a+\langle x,a\rangle N(y)-\langle x,y\rangle N(a)-\langle x,a\rangle N(y)\Big)\\
&=&\langle N(x),N(y)\rangle a+\langle x,y\rangle N^2(a).
\end{eqnarray*}
The second conclusion follows immediately.\qed

\begin{pro}
Let $(\g,\cdot^a)$ be the pre-Lie algebra given above and $N\in\gl(\g)$  satisfying \eqref{eq:Burger2}.
 Then we have
 $$
 x\cdot^a_N(y)=-x\cdot^{N(a)}y, \quad \forall~x,y\in \g.
 $$
Thus, $(\g,\cdot^a_N)$ is a pre-Lie algebra.
\end{pro}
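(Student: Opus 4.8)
The plan is to establish the pointwise identity $x\cdot^a_N(y)=-x\cdot^{N(a)}y$ by a direct expansion, after which the fact that $(\g,\cdot^a_N)$ is a pre-Lie algebra is immediate. First I would invoke \eqref{eq:LANij}, so that $x\cdot^a_N y=N(x)\cdot^a y+x\cdot^a N(y)-N(x\cdot^a y)$, and then substitute the defining formula $u\cdot^a v=\langle u,v\rangle a+\langle u,a\rangle v$ into each of the three terms, using in particular $N(x\cdot^a y)=\langle x,y\rangle N(a)+\langle x,a\rangle N(y)$ by linearity of $N$. Collecting the result, the coefficient of $a$ equals $\langle N(x),y\rangle+\langle x,N(y)\rangle$, which vanishes by \eqref{eq:Burger2}, while the summand $\langle x,a\rangle N(y)$ coming from $x\cdot^a N(y)$ cancels against the $-\langle x,a\rangle N(y)$ coming from $-N(x\cdot^a y)$. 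What survives is $\langle N(x),a\rangle\, y-\langle x,y\rangle\, N(a)$.

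Next I would apply \eqref{eq:Burger2} a second time, now with the second argument specialized to $a$, to rewrite $\langle N(x),a\rangle=-\langle x,N(a)\rangle$; the surviving expression then becomes $-\big(\langle x,y\rangle N(a)+\langle x,N(a)\rangle y\big)$, which is exactly $-x\cdot^{N(a)}y$. This proves the displayed identity.

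Finally, to conclude that $(\g,\cdot^a_N)$ is a pre-Lie algebra, I would note that $\cdot^{N(a)}$ has precisely the same form as $\cdot^a$ with the distinguished vector $a$ replaced by $N(a)$, hence $(\g,\cdot^{N(a)})$ is a pre-Lie algebra by the construction recalled at the beginning of this subsection; and negating a pre-Lie product does not change its associator, since for $\ast=-\cdot^{N(a)}$ one has $(x\ast y)\ast z-x\ast(y\ast z)=(x\cdot^{N(a)}y)\cdot^{N(a)}z-x\cdot^{N(a)}(y\cdot^{N(a)}z)$, so the symmetry in the first two arguments is inherited. Thus $\cdot^a_N=-\cdot^{N(a)}$ is pre-Lie. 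The whole argument is routine computation; the only things to watch are the two applications of \eqref{eq:Burger2} (and their signs) and the elementary observation that $-\pi$ is pre-Lie whenever $\pi$ is, so I anticipate no genuine obstacle.
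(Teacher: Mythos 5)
Your proof is correct and follows essentially the same route as the paper: expand $x\cdot^a_N y$ via \eqref{eq:LANij}, cancel the coefficient of $a$ and the $\langle x,a\rangle N(y)$ terms, and apply \eqref{eq:Burger2} once more to identify the remainder with $-x\cdot^{N(a)}y$. Your closing remark that negating a pre-Lie product leaves the associator unchanged makes explicit a step the paper leaves implicit, but it is the same argument.
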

\pf By \eqref{eq:Burger2}, we have
\begin{eqnarray*}
x\cdot_{N}^a y&=& N(x) \cdot^a y+x \cdot^a N(y)-N(x \cdot^a y)\\
&=&\langle N(x),y\rangle a+\langle N(x),a\rangle y+\langle x,N(y)\rangle a+\langle x,a\rangle N(y)-\langle x,y\rangle N(a)-\langle x,a\rangle N(y)\\
&=&\langle x,y\rangle (-N(a))+\langle x,-N(a)\rangle y\\
&=&-x\cdot^{N(a)}y.
\end{eqnarray*}
Thus, $(\g,\cdot^a_N)$ is a pre-Lie algebra. \qed
\subsection{Pre-Lie algebras
from Rota-Baxter operators on associative algebras}

  Let $(\g,\ast)$ be an associative algebra. Then a linear
map $N:\g\longrightarrow \g$ is called a Nijenhuis operator on
$\g$ if it is a Nijenhuis operator as a pre-Lie algebra. For more
details of Nijenhuis operators on associative algebras, see
$\cite{CaGraMar,Fard}$.

\begin{lem}{\rm(\cite{GolSok})}
Let $(\g,\ast)$ be an associative algebra and $R:\g\longrightarrow \g$  a linear map satisfying
\begin{equation}
R(x)\ast R(y)+R(x\ast y)=R(R(x)\ast y+x\ast R(y)),\quad\forall~x,y\in\g.
\end{equation}
Then $(\g,\cdot^R)$ is a pre-Lie algebra, where $\cdot^R$ is given by
\begin{equation}\label{eq:AssNLA}
x\cdot^R y=R(x)\ast y-y\ast R(x)-x\ast y,\quad \forall~x,y\in \g.
\end{equation}
\end{lem}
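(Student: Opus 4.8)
The plan is to sidestep a direct computation of the associator of $\cdot^R$ and instead use the standard reformulation: a bilinear product $\cdot^R$ on $\g$ is pre-Lie if and only if the left-multiplication operators $L_x\in\gl(\g)$, $L_x(z)=x\cdot^R z$, satisfy $L_{x\cdot^R y-y\cdot^R x}=[L_x,L_y]$ for all $x,y\in\g$. This equivalence is immediate, since
\[
(x,y,z)_{\cdot^R}-(y,x,z)_{\cdot^R}=L_{x\cdot^R y}(z)-L_{y\cdot^R x}(z)-[L_x,L_y](z)=L_{x\cdot^R y-y\cdot^R x}(z)-[L_x,L_y](z).
\]

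First I would record a convenient operator description. Let $\lambda_a,\rho_a\in\gl(\g)$ denote the left and right multiplications of the associative algebra, $\lambda_a(z)=a\ast z$, $\rho_a(z)=z\ast a$, and set $D_a:=\lambda_a-\rho_a$, the adjoint operator of $a$ in the Lie algebra $(\g,[\cdot,\cdot]_\ast)$ with $[a,b]_\ast=a\ast b-b\ast a$. Directly from $x\cdot^R y=R(x)\ast y-y\ast R(x)-x\ast y$ one gets
\[
L_x=D_{R(x)}-\lambda_x,\qquad x\cdot^R y-y\cdot^R x=[R(x),y]_\ast+[x,R(y)]_\ast-[x,y]_\ast .
\]
Next I would note the elementary bracket relations that follow only from associativity: $[\lambda_a,\lambda_b]=\lambda_{[a,b]_\ast}$, $[\lambda_a,\rho_b]=0$, $[\rho_a,\rho_b]=-\rho_{[a,b]_\ast}$, and hence $[D_a,D_b]=D_{[a,b]_\ast}$ and $[D_a,\lambda_b]=\lambda_{[a,b]_\ast}$. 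Expanding $[L_x,L_y]=[D_{R(x)}-\lambda_x,\,D_{R(y)}-\lambda_y]$ with these relations gives
\[
[L_x,L_y]=D_{[R(x),R(y)]_\ast}-\lambda_{[R(x),y]_\ast+[x,R(y)]_\ast-[x,y]_\ast}.
\]

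Then comes the only step that genuinely uses the hypothesis on $R$: subtracting the defining identity for the pair $(y,x)$ from the one for $(x,y)$ yields the skew-symmetrized form
\[
[R(x),R(y)]_\ast=R\big([R(x),y]_\ast+[x,R(y)]_\ast-[x,y]_\ast\big)=R(x\cdot^R y-y\cdot^R x).
\]
Substituting this into the previous display, the right-hand side becomes $D_{R(x\cdot^R y-y\cdot^R x)}-\lambda_{x\cdot^R y-y\cdot^R x}=L_{x\cdot^R y-y\cdot^R x}$, which is exactly the identity characterizing pre-Lie algebras; hence $(\g,\cdot^R)$ is a pre-Lie algebra.

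The computation is routine and there is no serious obstacle; the only delicate point is bookkeeping of signs in the bracket relations among $\lambda_a$, $\rho_a$, $D_a$ and in the skew-symmetrization of the identity for $R$. One could alternatively argue through a dendriform structure, taking $x\succ y:=R(x)\ast y-x\ast y$ and $x\prec y:=x\ast R(y)$, checking the two dendriform axioms from the hypothesis on $R$, and using that $x\cdot y=x\succ y-y\prec x$ defines a pre-Lie algebra; this is equivalent but requires verifying two identities rather than one.
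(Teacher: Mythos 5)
Your proof is correct. Note first that the paper itself gives no argument for this lemma: it is quoted from \cite{GolSok}, so there is no internal proof to compare against; the expected verification would be a direct expansion of the associator of $\cdot^R$ and a term-by-term cancellation using associativity and the identity for $R$. Your operator-theoretic route is a clean substitute. The reduction of the pre-Lie identity to $L_{x\cdot^R y-y\cdot^R x}=[L_x,L_y]$ is the standard equivalence and your one-line verification of it is right; the bracket relations $[\lambda_a,\lambda_b]=\lambda_{[a,b]_\ast}$, $[\lambda_a,\rho_b]=0$, $[\rho_a,\rho_b]=-\rho_{[a,b]_\ast}$, hence $[D_a,D_b]=D_{[a,b]_\ast}$ and $[D_a,\lambda_b]=\lambda_{[a,b]_\ast}$, are exactly the consequences of associativity needed, and I checked that the expansion
\[
[L_x,L_y]=D_{[R(x),R(y)]_\ast}-\lambda_{[R(x),y]_\ast+[x,R(y)]_\ast-[x,y]_\ast}
\]
and the skew-symmetrization $[R(x),R(y)]_\ast=R(x\cdot^R y-y\cdot^R x)$ of the hypothesis combine to give $L_{x\cdot^R y-y\cdot^R x}$ as claimed. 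A feature worth keeping from your argument is that it isolates precisely what is used: only the commutator (Lie-algebra-level) consequence of the weight $-1$ Rota--Baxter identity enters, not the full associative identity, which a brute-force associator computation would obscure. The dendriform aside is not needed and I would drop it unless you intend to verify those two axioms, since for weight $-1$ the relevant splitting is really of tridendriform type and the bookkeeping there is less routine than the main argument.
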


  \begin{rmk}The above $R$ is exactly  a Rota-Baxter
operator of weight $-1$ by regarding $(\g,\ast)$ as a pre-Lie
algebra. In fact, the notion of Rota-Baxter operator on an
associative algebra was introduced to solve analytic \cite{Bax}
and combinatorial \cite{Rota} problems.
\end{rmk}

\begin{pro}
Let  $R$ be a Rota-Baxter operator of weight $-1$ and $N$  a
Nijenhuis operator on an associative algebra $(\g,\ast)$. If
$R\circ N=N\circ R$, then we have
 \begin{itemize}
  \item[\rm(i)]$R$ is also a  Rota-Baxter operator of weight $-1$ on the associative algebra $(\g,\ast_N)$;

  \item[\rm(ii)] $N$ is a Nijenhuis operator on the pre-Lie algebra $(\g,\cdot^R)$;

    \item[\rm(iii)]the pre-Lie algebra $(\g,\cdot^R_N)$ defined by the Nijenhuis operator $N$ is exactly the pre-Lie algebra given by  \eqref{eq:AssNLA} associated to the associative algebra $(\g,\ast_N)$ and $R$.
 \end{itemize}
\end{pro}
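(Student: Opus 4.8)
The plan is to verify the three claims in order, exploiting the fact that $(\g,\ast)$, and hence its Nijenhuis-deformed product $\ast_N$, are associative algebras together with a compatible polynomial/Rota-Baxter structure. For part (i), I would use Lemma~\ref{lem:morp} (more precisely the associative analogue: since $N$ is a Nijenhuis operator on the pre-Lie algebra $(\g,\ast)$ which happens to be associative, the deformed product $\ast_N$ given by $x\ast_N y=N(x)\ast y+x\ast N(y)-N(x\ast y)$ is again associative, by Theorem~\ref{thm:LANij} and Proposition~\ref{pro:LANij}). The Rota-Baxter identity of weight $-1$ for $R$ on $(\g,\ast_N)$ reads $R(x)\ast_N R(y)+R(x\ast_N y)=R(R(x)\ast_N y+x\ast_N R(y))$. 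Expanding $\ast_N$ in terms of $\ast$ and $N$, and using $R\circ N=N\circ R$ together with the original weight $-1$ identity for $R$ on $(\g,\ast)$, every term should match; this is the same bookkeeping computation as in the Yang--Baxter proposition of Subsection~8.1, just with the associative product in place of the Lie bracket.

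For part (ii), I would argue that $N$ is a Nijenhuis operator on $(\g,\cdot^R)$ by checking $N(x)\cdot^R N(y)=N\big(N(x)\cdot^R y+x\cdot^R N(y)-N(x\cdot^R y)\big)$ directly from the formula $x\cdot^R y=R(x)\ast y-y\ast R(x)-x\ast y$. Substituting and using that $N$ is a Nijenhuis operator on $(\g,\ast)$ (applied to the three bilinear expressions $R(x)\ast y$, $y\ast R(x)$, $x\ast y$ separately) together with $R\circ N=N\circ R$ to move $N$ through $R$, the identity reduces to the Nijenhuis condition for $N$ on $\ast$. Alternatively, and perhaps more cleanly, one can observe that $\cdot^R$ is built functorially from $(\ast, R)$ and that $N$ commutes with both structures, so it is a Nijenhuis operator on the output; but I would still write out the direct verification since the referee will want it.

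For part (iii), the point is to show $\cdot^R_N=\cdot^{R}_{\text{(for }\ast_N)}$, i.e. that deforming $(\g,\cdot^R)$ by $N$ gives the same product as first deforming $\ast$ to $\ast_N$ and then applying the construction \eqref{eq:AssNLA} with the \emph{same} $R$. On the left we have $x\cdot^R_N y=N(x)\cdot^R y+x\cdot^R N(y)-N(x\cdot^R y)$; expanding via \eqref{eq:AssNLA} this becomes a sum of $\ast$-terms. On the right we have $R(x)\ast_N y-y\ast_N R(x)-x\ast_N y$; expanding $\ast_N$ and using $R\circ N=N\circ R$ we get another sum of $\ast$-terms. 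I expect both sides to collapse to $R(x)\ast N(y)+R(N(x))\ast y-N(R(x)\ast y) - \big(N(y)\ast R(x)+y\ast R(N(x))-N(y\ast R(x))\big)-\big(N(x)\ast y+x\ast N(y)-N(x\ast y)\big)$, so the identity is really just rearranging the six-term expansions.

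The main obstacle is purely combinatorial rather than conceptual: in parts (ii) and (iii) the product $\cdot^R$ is a sum of three $\ast$-monomials, so after one application of the (trilinear-looking) Nijenhuis identity of $N$ on $\ast$ one faces roughly nine terms on each side, and one must be careful that the $x\ast y$ (weight-like) term and the two $R$-twisted terms recombine correctly after $N$ is commuted past $R$. The cleanest way to keep this under control is to treat $R(x)\ast y$, $y\ast R(x)$ and $x\ast y$ as three separate bilinear maps, apply the Nijenhuis condition for $N$ to each, and only at the end use $R\circ N=N\circ R$; I would present the computation this way and omit the fully expanded intermediate lines, as is done elsewhere in the paper (e.g. ``We omit the details'').
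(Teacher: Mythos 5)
Your proposal follows essentially the same route as the paper: expand $\ast_N$ (resp.\ $\cdot^R$) in terms of $\ast$ and $N$, use $R\circ N=N\circ R$ to move $N$ past $R$, and regroup the terms into instances of the Rota--Baxter identity for (i), the Nijenhuis identity applied separately to $R(x)\ast y$, $y\ast R(x)$ and $x\ast y$ for (ii), and a six-term rearrangement for (iii) whose collapsed form you wrote down is exactly the paper's intermediate line. The only slip is a citation: the associativity of $\ast_N$ does not follow from Theorem~\ref{thm:LANij} or Proposition~\ref{pro:LANij} (which only yield a pre-Lie structure); it is the standard associative Nijenhuis fact from the references \cite{CaGraMar,Fard}, and the paper's own proof does not revisit it either.
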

\pf Let $R$ be a Rota-Baxter operator and $N$  a Nijenhuis operator on $\g$ satisfying $R\circ N=N\circ R$. Then we have
\begin{eqnarray*}
&&R(x)\ast_N R(y)+R(x\ast_N y)-R(R(x)\ast_N y+x\ast_N R(y))\\
&=&N(R(x))\ast R(y)+R(x)\ast N(R(y))-N(R(x)\ast R(y)))+R(N(x) \ast y+x\ast N(y)-N(x\ast y))\\
&&-R\Big(N(R(x))\ast y+R(x)\ast N(y)-N(R(x)\ast y)+N(x)\ast R(y)+x\ast N(R(y))-N(x\ast R(y))\Big)\\
&=&R(N(x))\ast R(y)+R(N(x)\ast y)-R(R(N(x))\ast y+N(x)\ast R(y))\\
&&+R(x)\ast R(N(y))+R(x\ast N(y))-R(R(x)\ast N(y)+x\ast R(N(y)))\\
&&-N\Big(R(x)\ast R(y)+R(x\ast y)-R(R(x)\ast y-x\ast R(y))\Big)=0,
\end{eqnarray*}
which implies that $R$ is a Rota-Baxter operator  of
weight $-1$  on the associative algebra $(\g,\ast_N)$.

For all $x,y\in \g$, also by $R\circ N=N\circ R$, we have
\begin{eqnarray*}
&&N(x)\cdot^R N(y)-N(N(x)\cdot^R y+x\cdot^R N(y)-N(x\cdot^R y))\\
&=&R(N(x))\ast N(y)-N(y)\ast R(N(x))-N(x)\ast N(y)-N\big(R(N(x))\ast y-y\ast R(N(x))\\&&-N(x)\ast y
+R(x)\ast N(y)-N(y)\ast R(x)-x\ast N(y)-N(R(x)\ast y-y\ast R(x)-x\ast y)\big)\\
&=&N(R(x))\ast N(y)-N(N(R(x))\ast y+R(x)\ast N(y)-N(R(x)\ast y))\\
&&-N(y)\ast N(R(x))+N(N(y)\ast R(x)+y\ast N(R(x))-N(y\ast R(x)))\\
&&-N(x)\ast N(y)+N(N(x)\ast y+x\ast N(y)-N(x\ast y))=0,
\end{eqnarray*}
which implies that $N$ is a Nijenhuis operator on the pre-Lie algebra $(\g,\cdot^R)$.

At last, we have
\begin{eqnarray*}
x\cdot^R_N y&=&N(x)\cdot^R y+x\cdot^R N(y)-N(x\cdot^R y)\\
&=&R(N(x))\ast y-y\ast R(N(x))-N(x)\ast y+R(x)\ast N(y)-N(y)\ast R(x)-x\ast N(y)\\
&&-N(R(x)\ast y-y\ast R(x)-x\ast y)\\
&=&R(x)\ast_N y-y\ast_N R(x)-x\ast_N y,
\end{eqnarray*}
which finishes the proof.\qed

\subsection{Novikov algebras
from derivations on commutative associative algebras}

Recall that a Novikov algebra $(\g,\cdot_\g)$ is a pre-Lie algebra
satisfying $R_xR_y = R_yR_x$ for all $x, y \in \g.$ The following
construction of Novikov algebras is due to   Gel'fand
\rm\cite{GelDor},   Filipov \rm\cite{Fili} and    Xu \rm\cite{Xu}.
\begin{lem}
Let $D$ be a derivation on a commutative associative algebra $(\g,\ast)$  over a field $\K$.
Then for all $s\in\K$, the new product
\begin{equation}\label{eq:AssDer1}
x\cdot^s y=x\ast D(y)+s (x\ast y),\quad \forall~x,y\in \g
\end{equation}
makes $(\g,\cdot^s)$ being a Novikov algebra. Furthermore, for all $\alpha \in \g$, $(\g,\cdot^\alpha)$ is also a Novikov algebra, where $\cdot^\alpha$ is given by
$$
x\cdot^\alpha y=x\ast D(y)+\alpha\ast x\ast y,\quad \forall~x,y\in \g.
$$
\end{lem}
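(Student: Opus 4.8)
The plan is to verify directly that each of the two products satisfies the two defining axioms of a Novikov algebra, namely that the associator is symmetric in its first two arguments (the pre-Lie identity) and that the right multiplication operators commute; in fact both statements follow from a single computation. First I would treat the more general product $x\cdot^\alpha y=x\ast D(y)+\alpha\ast x\ast y$. Expanding $(x\cdot^\alpha y)\cdot^\alpha z-x\cdot^\alpha(y\cdot^\alpha z)$ and using the Leibniz rule $D(a\ast b)=D(a)\ast b+a\ast D(b)$ (applied in particular to $D(y\ast D(z))$ and to $D(\alpha\ast y\ast z)$) together with associativity of $\ast$, the associator collapses, after cancellation, to $-\alpha\ast x\ast y\ast D(z)-x\ast y\ast D^2(z)-x\ast y\ast z\ast D(\alpha)$, which is symmetric under exchanging $x$ and $y$ precisely because $\ast$ is commutative. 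Hence $(\g,\cdot^\alpha)$ is a pre-Lie algebra.

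For the Novikov identity I would expand $(z\cdot^\alpha x)\cdot^\alpha y$ and $(z\cdot^\alpha y)\cdot^\alpha x$ directly; no Leibniz rule is needed here, since $D$ is never applied to a product. Each side is a sum of four monomials, of the shapes $z\ast D(x)\ast D(y)$, $\alpha\ast z\ast x\ast D(y)$, $\alpha\ast z\ast D(x)\ast y$, $\alpha\ast\alpha\ast z\ast x\ast y$ and their $x\leftrightarrow y$ swaps, and commutativity of $\ast$ matches the two sums term by term. Thus the right multiplication operators $R^\alpha_x : z\mapsto z\cdot^\alpha x$ commute, so $(\g,\cdot^\alpha)$ is a Novikov algebra.

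The first statement is then the special case where $\alpha$ is a scalar: if $\g$ has no unit, adjoin one to form the commutative associative algebra $\g^+=\g\oplus\K e$, extend $D$ to a derivation of $\g^+$ by $D(e)=0$ (so that $D(x+\lambda e)=D(x)$), and take $\alpha=s\,e$. For $x,y\in\g$ one has $x\cdot^{se}y=x\ast D(y)+s\,(x\ast y)=x\cdot^s y\in\g$, so $\g$ is a subalgebra of the Novikov algebra $(\g^+,\cdot^{se})$ and therefore a Novikov algebra itself; alternatively one simply repeats the two computations above with $\alpha$ replaced by the scalar $s$, so that the terms involving $D(\alpha)$ are absent.

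The only obstacle is organising the expansion of the associator for $\cdot^\alpha$ so that the numerous monomials in $D$, $D^2$ and $\ast$ cancel as claimed; there is no conceptual difficulty, since everything rests on the Leibniz rule and on the commutativity and associativity of $\ast$. Indeed this is the classical construction of Gel'fand, Filipov and Xu, so one may also simply cite \cite{GelDor,Fili,Xu}.
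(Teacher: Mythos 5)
Your verification is correct. The paper itself gives no proof of this lemma --- it is stated as a known construction and attributed to Gel'fand, Filipov and Xu via the citations immediately preceding it --- so there is no argument in the paper to compare against; your direct computation is the standard one and it checks out. In particular, the associator for $\cdot^\alpha$ does collapse to $-x\ast y\ast D^2(z)-\alpha\ast x\ast y\ast D(z)-D(\alpha)\ast x\ast y\ast z$, which is symmetric in $x,y$ by commutativity of $\ast$, and $(z\cdot^\alpha x)\cdot^\alpha y$ is manifestly symmetric in $x,y$ since $D$ is never applied to a product there, giving $R^\alpha_xR^\alpha_y=R^\alpha_yR^\alpha_x$. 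The reduction of the scalar case to the $\alpha$-case by adjoining a unit and taking $\alpha=se$ with $D(e)=0$ is a clean way to avoid repeating the computation, though as you note the direct repetition (with the $D(\alpha)$ term absent) is equally immediate.
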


\begin{pro}
 Let $D$ be a derivation and $N$  a Nijenhuis operator on a commutative associative algebra $(\g,\ast)$ over the field  $\K$. If $D\circ N=N\circ D$, then we have
  \begin{itemize}
 \item[\rm(i)] $D$ is also a  derivation on the commutative associative algebra $(\g,\ast_N)$;
 \item[\rm(ii)] $N$ is a Nijenhuis operator on the pre-Lie algebra $(\g,\cdot^s)$ for $s\in \K$;
  \item[\rm(iii)]the pre-Lie algebra $(\g,\cdot_{N}^s)$ defined by the Nijenhuis operator $N$
is exactly the pre-Lie algebra given by  \eqref{eq:AssDer1} associated to the commutative associative algebra $(\g,\ast_{N})$, i.e.
$$
x\cdot_{N}^s y=x\ast_{N} D(y)+s (x\ast_{N} y). \quad\forall~ x,y\in \g.
$$
\end{itemize}
\end{pro}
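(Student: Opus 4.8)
The plan is to establish all three items by direct expansion, the two essential inputs being the hypothesis $D\circ N=N\circ D$ and the Nijenhuis identity for $N$ on $(\g,\ast)$, namely $N(a)\ast N(b)=N\big(N(a)\ast b+a\ast N(b)-N(a\ast b)\big)$ for all $a,b\in\g$, together with the Leibniz rule $D(a\ast b)=D(a)\ast b+a\ast D(b)$. Everything reduces to moving $D$ through $N$ at the right places and invoking the Nijenhuis identity for $\ast$.

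First I would record that $(\g,\ast_N)$ is again a commutative associative algebra, where $x\ast_N y=N(x)\ast y+x\ast N(y)-N(x\ast y)$: commutativity is immediate from that of $\ast$, and associativity is the standard fact that a Nijenhuis operator deforms an associative algebra into an associative algebra (see \cite{CaGraMar,Fard}, or a short direct check). This makes the statement of (i) meaningful. For (i) I would expand $D(x\ast_N y)$ by the Leibniz rule for $\ast$, then push every occurrence of $D$ through $N$ using $D\circ N=N\circ D$; regrouping the resulting six terms gives precisely $D(x)\ast_N y+x\ast_N D(y)$.

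For (ii), the clean route is to observe that ``$N$ is a Nijenhuis operator for a bilinear map $\pi$'' is a condition linear in $\pi$ (since $T_\pi N(x,y)=N(x\cdot_N y)-\pi(N(x),N(y))$ is linear in $\pi$), so $N$ stays Nijenhuis for $s\,\ast$; moreover, if $N$ is Nijenhuis for $\pi$ and commutes with $D$, then $N$ is Nijenhuis for the twisted map $\pi'(x,y):=\pi(x,D(y))$, because $N(x)\cdot'N(y)=N(x)\cdot D(N(y))=N(x)\cdot N(D(y))$ and one applies the Nijenhuis identity for $\pi$ with $D(y)$ in place of $y$. Since $x\cdot^s y=x\ast D(y)+s\,(x\ast y)$ is the sum of such a twisted map and a scalar multiple of $\ast$, $N$ is a Nijenhuis operator on $(\g,\cdot^s)$ for every $s\in\K$. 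For (iii) I would compute $x\cdot^s_N y=N(x)\cdot^s y+x\cdot^s N(y)-N(x\cdot^s y)$ straight from the definition of $\cdot^s$, again using $D\circ N=N\circ D$ to replace $D(N(y))$ by $N(D(y))$; separating the $D$-twisted terms from the $s$-terms yields exactly $x\ast_N D(y)+s\,(x\ast_N y)$, which, by the Novikov-algebra construction recalled above applied to the commutative associative algebra $(\g,\ast_N)$, is its associated product $\cdot^s$.

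No step is a genuine obstacle; the only things to watch are the bookkeeping in the expansions and inserting $D\circ N=N\circ D$ at every point where $D$ and $N$ meet. The one subtlety worth flagging---and the reason I would dispatch it first---is the associativity of $\ast_N$, needed so that the phrase ``derivation on the commutative associative algebra $(\g,\ast_N)$'' in (i) is well posed.
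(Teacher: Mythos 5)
Your proposal is correct, and items (i) and (iii) follow the paper's proof essentially verbatim: a direct expansion using the Leibniz rule and the substitution $D(N(\cdot))=N(D(\cdot))$. For (ii) you take a mildly different, and cleaner, route: instead of expanding the full Nijenhuis torsion of $N$ for $\cdot^s$ in one block as the paper does, you observe that the torsion $T_\pi N$ is linear in the underlying product $\pi$ and that Nijenhuis-ness is preserved when $\pi(x,y)$ is replaced by $\pi(x,D(y))$ whenever $D\circ N=N\circ D$; since $x\cdot^s y=x\ast D(y)+s(x\ast y)$ is a linear combination of the $D$-twist of $\ast$ and $\ast$ itself, the conclusion drops out. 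This is the same computation reorganized into two reusable observations, and it buys some transparency at no cost. You are also right to flag, as the paper does not, that one should first note $(\g,\ast_N)$ is again a commutative associative algebra (standard for Nijenhuis operators on associative algebras, cf.\ the references the paper cites) so that statement (i) is well posed.
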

\pf By $D\circ N=N\circ D$, we have
\begin{eqnarray*}
D(x\ast _N y)&=&D(N(x)\ast y+x\ast N(y)-N(x\ast y))\\
&=&D(N(x))\ast y+N(x) \ast D(y)+D(x)\ast N(y)\\
&&+x\ast D(N(y))-N(D(x))\ast y+x\ast D(y))\\
&=&N(D(x))\ast y+D(x)\ast N(y)-N(D(x)\ast y)\\
&&+N(x) \ast D(y)+x\ast N(D(y))-N(x\ast D(y))\\
&=&D(x)\ast _N y+x\ast_N D(y),
\end{eqnarray*}
which implies that $D$ is a derivation on the commutative associative algebra $(\g,\ast_N)$.

Also by $D\circ N=N\circ D$, we have
\begin{eqnarray*}
&&N(x)\ast^s N(y)-N\big(N(x)\ast^s y+x\ast^s N(y)-N(x\ast^s y)\big)\\
&=&N(x)\ast D(N(y))+s (N(x)\ast N(y))-N\Big(N(x)\ast D(y)\\
&&+s N(x)\ast y+x\ast D(N(y))+s x\ast N(y)-N(x\ast D(y)+s (x\ast y))\Big)\\
&=&N(x)\ast N(D(y))-N\big(N(x)\ast D(y)+x\ast N(D(y))-N(x\ast D(y))\big)\\
&&+s \Big(N(x)\ast N(y)-N\big( N(x)\ast y+x\ast N(y)-N( x\ast y)\big)\Big)=0,
\end{eqnarray*}
which implies that $N$ is a Nijenhuis operator on the pre-Lie algebra $(\g,\cdot^s)$ for all $s\in \Field$.

At last, we have
\begin{eqnarray*}
x\cdot^s_N y&=&N(x)\cdot^s y+x\cdot^s N(y)-N(x\cdot^s y)\\
&=&N(x)\ast D(y)+s (N(x)\ast y)+x\ast D(N(y))+s (x\ast  N(y))-N(x\ast D(y)+s (x\ast y))\\
&=&x\ast_{N} D(y)+s (x\ast_{N} y),
\end{eqnarray*}
which finishes the proof.\qed\vspace{3mm}

Similarly, we have
\begin{pro}
  For a fixed $\alpha\in \g$, assume that $N(\alpha\ast x\ast y)=\alpha\ast N(x\ast y)$ for all $x,y\in \g$, then the above results are also true.
\end{pro}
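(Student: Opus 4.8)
The plan is to rerun, essentially verbatim, the three-step argument of the preceding proposition, with the extra hypothesis $N(\alpha\ast x\ast y)=\alpha\ast N(x\ast y)$ --- call it (H) --- playing for the summand $\alpha\ast x\ast y$ of $x\cdot^\alpha y=x\ast D(y)+\alpha\ast x\ast y$ the role that $D\circ N=N\circ D$ plays for the summand $x\ast D(y)$. Put $\pi_1(x,y)=x\ast D(y)$ and $\pi_2(x,y)=\alpha\ast x\ast y$, so that $\cdot^\alpha=\pi_1+\pi_2$. By bilinearity, (H) upgrades to $N(\alpha\ast w)=\alpha\ast N(w)$ for every $w$ in the linear span of the products $x\ast y$, and this ``operator form'' is what the computations below invoke.

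Part (i) requires nothing new: the verification that $D$ is a derivation of $(\g,\ast_N)$ uses only $D\circ N=N\circ D$ and the definition of $\ast_N$, hence is word-for-word the one already given. For (ii), recall from Lemma~\ref{lem:morp} and \eqref{eq:LANij1} that $T_\pi N$ is $\K$-linear in $\pi$; thus $T_{\cdot^\alpha}N=T_{\pi_1}N+T_{\pi_2}N$, and as $T_{\pi_1}N=0$ is the $s=0$ instance of the preceding proposition, it suffices to show $T_{\pi_2}N=0$. One expands the $N$-deformation of $\pi_2$,
$$\pi_2(N(x),y)+\pi_2(x,N(y))-N(\pi_2(x,y))=\alpha\ast N(x)\ast y+\alpha\ast x\ast N(y)-N(\alpha\ast x\ast y),$$
and rewrites the last term by (H) as $\alpha\ast N(x\ast y)$, so the right-hand side collapses to $\alpha\ast\big(N(x)\ast y+x\ast N(y)-N(x\ast y)\big)=\alpha\ast(x\ast_N y)$. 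Applying $N$, moving it past $\alpha\ast(-)$ via the operator form of (H), and using that $N$ is a Nijenhuis operator on $(\g,\ast)$ (so $N(x)\ast N(y)=N(x\ast_N y)$), one obtains $T_{\pi_2}N(x,y)=N\big(\alpha\ast(x\ast_N y)\big)-\alpha\ast\big(N(x)\ast N(y)\big)=\alpha\ast N(x\ast_N y)-\alpha\ast N(x\ast_N y)=0$. Hence $N$ is a Nijenhuis operator on $(\g,\cdot^\alpha)$.

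For (iii), expand $x\cdot^\alpha_N y=N(x)\cdot^\alpha y+x\cdot^\alpha N(y)-N(x\cdot^\alpha y)$ and separate its $x\ast D(y)$-part from its $\alpha\ast x\ast y$-part. The first, after replacing $D(N(y))$ by $N(D(y))$, equals $x\ast_N D(y)$ exactly as in the preceding proposition; the second, after replacing $N(\alpha\ast x\ast y)$ by $\alpha\ast N(x\ast y)$ via (H), equals $\alpha\ast(x\ast_N y)$. This gives $x\cdot^\alpha_N y=x\ast_N D(y)+\alpha\ast(x\ast_N y)$, the evident analogue of part (iii) of the preceding proposition for the commutative associative algebra $(\g,\ast_N)$.

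The only real work lies in (ii), and its crux is the legitimacy of commuting $N$ with left multiplication by $\alpha$: (H) grants this on honest products $x\ast y$, whereas the element that ultimately has to be passed through is the $N$-deformed product $x\ast_N y=N(x)\ast y+x\ast N(y)-N(x\ast y)$. Checking that (H), together with the Nijenhuis identity for $N$ on $(\g,\ast)$, nonetheless yields $N\big(\alpha\ast(x\ast_N y)\big)=\alpha\ast N(x\ast_N y)$ is the one step calling for care; everything else is the routine bilinear bookkeeping, mirrored line by line from the preceding proposition.
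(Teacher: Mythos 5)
The paper offers no written proof of this proposition (it is introduced only with ``Similarly, we have''), so your task was to supply the routine verification. For parts (i) and (iii) you do exactly that, correctly: (i) uses only $D\circ N=N\circ D$, and in (iii) the hypothesis $N(\alpha\ast x\ast y)=\alpha\ast N(x\ast y)$ --- your (H) --- is applied only to the genuine product $x\ast y$, so the computation closes and yields $x\cdot^\alpha_N y=x\ast_N D(y)+\alpha\ast(x\ast_N y)$. The decomposition $\cdot^\alpha=\pi_1+\pi_2$ together with the linearity of $T_\pi N$ in $\pi$ is also a clean and correct way to organize the check, and it correctly reduces everything to $T_{\pi_2}N=0$.

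The gap is in (ii), and it sits exactly at the spot you flag in your last paragraph but never close. You invoke $N\bigl(\alpha\ast(x\ast_N y)\bigr)=\alpha\ast N(x\ast_N y)$ ``via the operator form of (H)'', but that operator form is only available on the linear span of products $u\ast v$, whereas $x\ast_N y=N(x)\ast y+x\ast N(y)-N(x\ast y)$ contains the summand $N(x\ast y)$, which need not lie in that span. If you carry the cancellation through honestly --- apply (H) to the pairs $(N(x),y)$ and $(x,N(y))$, and expand $\alpha\ast N(x)\ast N(y)$ by the Nijenhuis identity for $(\g,\ast)$ --- what survives is precisely
$$T_{\pi_2}N(x,y)=\alpha\ast N^2(x\ast y)-N^2(\alpha\ast x\ast y)=\alpha\ast N^2(x\ast y)-N\bigl(\alpha\ast N(x\ast y)\bigr),$$
i.e.\ the assertion that $N^2$ (equivalently, $N$ at the point $N(x\ast y)$) also commutes with left multiplication by $\alpha$. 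This does not follow from (H) together with the Nijenhuis identity alone: it would follow if $N(\g\ast\g)\subseteq\g\ast\g$, or if the hypothesis is strengthened to $N(\alpha\ast w)=\alpha\ast N(w)$ for all $w$ in an $N$-invariant subspace containing $\g\ast\g$ (it is automatic when $\g\ast\g=\g$, e.g.\ in the unital case). As written, your proof of (ii) asserts the key identity rather than proving it, and the residual term above is a genuine obstruction that must either be killed by an explicitly added assumption or acknowledged as a needed strengthening of the stated hypothesis.
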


Qi Wang and Yunhe Sheng

Department of Mathematics, Jilin University, Changchun 130012, Jilin, China

Email: shengyh@jlu.edu.cn
\vspace{2mm}

Chengming Bai

Chern Institute of Mathematics and LPMC, Nankai University,
Tianjin 300071, China

Email: baicm@nankai.edu.cn\vspace{2mm}

Jiefeng Liu

Department of Mathematics, Xinyang Normal University, Xinyang 464000, Henan, China

 Email:liujf12@126.com
\end{document}